\newcommand{\bm}[1]{\boldsymbol{#1}}
\newcommand{\E}{\mathbb{E\,}}
\renewcommand{\d}{\mathrm{d}}
\newcommand{\Y}{\bm{Y}}
\renewcommand{\H}{\bm{H}}
\newcommand{\Q}{\bm{Q}}
\newcommand{\wh}{\widehat}
\def\C {\,|\:}
\def\C {\,|\:}
\def\B{\bm{B}}
\def\Y{\bm{Y}}
\def\X{\bm{X}}
\def\x{\bm{x}}
\def\y{\bm{y}}
\def\by{\bm{y}}
\def\bg{\bm{\gamma}}
\def\z{\bm{z}}
\def\b{\bm{\beta}}
\renewcommand{\d}{\mathrm{d}\,}
\newcommand{\bdm}{\begin{displaymath}}
\newcommand{\edm}{\end{displaymath}}
\newtheorem{definition}{Definition}[section]
\newtheorem{lemma}{Lemma}[section]
\newtheorem{theorem}{Theorem}[section]
\newtheorem{example}{Example}[section]
\newtheorem{remark}{Remark}[section]
\newtheorem{corollary}{Corollary}[section]
 \theoremstyle{assumption}
\numberwithin{equation}{section}
\theoremstyle{plain}
\begin{document}

\begin{frontmatter}
\title{The Median Probability Model and Correlated Variables}
\runtitle{Median Probability Model}
\begin{aug}

\author{\fnms{Marilena} \snm{Barbieri}\thanksref{m1}\ead[label=e1]{first@somewhere.com}},
\author{\fnms{James O.} \snm{Berger}\thanksref{m2}\ead[label=e2]{second@somewhere.com}}\\
\author{\fnms{Edward I.} \snm{George and}\thanksref{m3}\ead[label=e1]{first@somewhere.com}},
\author{\fnms{Veronika} \snm{Ro\v{c}kov\'{a}}\thanksref{t4,m4}\ead[label=e1]{first@somewhere.com}},\\
{\normalsize \sl 17 August  2018}


\thankstext{t4}{
Veronika.Rockova@ChicagoBooth.edu; \hspace{12cm}
 This This work was supported by the James S. Kemper Foundation Faculty Research Fund at  the University of Chicago Booth School of Business.}


\affiliation{Universit\`a Roma Tre \thanksmark{m1}}
\affiliation{Duke University \thanksmark{m2}}
\affiliation{University of Pennsylvania \thanksmark{m3}}
\affiliation{University of Chicago \thanksmark{m4}}

\address{}

\end{aug}

\begin{abstract}
The median probability model (MPM) \citep{barbieri} is defined as the model  consisting of those variables  whose marginal posterior  probability of inclusion is at least $0.5$. The MPM rule yields  the best single model for prediction  in orthogonal and nested correlated designs. This result was  originally conceived under a specific class of priors, such as the point mass mixtures of non-informative  and $g$-type priors.
The MPM rule, however, has become  so very popular that it is now being deployed for a wider variety of priors and under correlated designs, where the properties of MPM are not yet completely understood.
The main thrust of this work is to shed light on properties of MPM
in these contexts by (a) characterizing situations when MPM is still safe under correlated designs, (b) providing significant generalizations of MPM to a broader class of priors (such as continuous spike-and-slab priors). We also provide new supporting evidence for the suitability of $g$-priors, as opposed to independent product priors, using new predictive matching arguments. Furthermore, we  emphasize the importance of  prior model probabilities and highlight
the merits of non-uniform prior probability assignments using the notion of model aggregates.\end{abstract}


\begin{keyword}
\kwd{Bayesian variable selection}
\kwd{Median probability model}
\kwd{Multicollinearity}
\kwd{Spike and slab}
\end{keyword}

\end{frontmatter}

\section{Introduction}
This paper investigates to which extent the median probability model rule of  Barbieri and Berger \cite{barbieri} can be used for variable selection when the covariates are correlated.
To this end, we consider the usual linear model
\vspace{-0.1cm}
\begin{equation}\label{model}
\bm Y_{n \times 1} \sim  \mathcal{N}_{n}\left(\bm X \bm \beta ,\sigma^2 \bm I \right),
\end{equation}
where $\Y$ is the $n\times 1$ vector of responses, $\X$ is the $n\times q$ design matrix of covariates, $\b$ is a $q\times 1$ vector of unknown coefficients, and $\sigma^2$ is a known scalar.
The equation \eqref{model} corresponds to the full model and we are interested in selecting a submodel indexed by $\bg=(\gamma_1,\dots,\gamma_{q})'$, where $\gamma_i\in\{1,0\}$ for whether the $i^{th}$ covariate is in or out of the model. We tacitly assume that the response has been centered and thereby omit the intercept term.

For prediction of a new observation $y^\star$ from $\x^\star$ under squared error loss, the optimal model $\bg^{o}$   is known to  satisfy (Lemma 1 of \cite{barbieri})
\begin{equation}\label{Rcrit}
\bg^{o}=\arg\min_{\bg} R(\bg)\quad\text{with}\quad R(\bg)\equiv \left(\H_{\bg}\wh\b_{\bg}-\bar\b\right)'\Q\left(\H_{\bg}\wh\b_{\bg}-\bar\b \right),
\end{equation}
where $\bar{\b}=\E[\b\C\Y]$ is the overall posterior mean of $\b$ under the hierarchical prior $\pi(\bg)$ and $\pi(\b\C\bg)$, $\H_{\bg}$ is a $q\times |\bg|$ stretching matrix (defined in Section 2.2 of \cite{barbieri}) whose $(i,j)$ entry is  $1$ if $\gamma_i=1$ and $j=\sum_{r=1}^i\gamma_r$ and $0$ otherwise, and $\wh{\b}_{\bg}$ is the conditional posterior mean under $\bg$ and where $\Q=\E[\x^\star\x^{\star\prime}]=\X'\X$.

Contrary to what might be commonly conceived as an optimal predictive model,  $\bg^o$ is {\sl not} necessarily the {modal} highest posterior probability model.  In orthogonal and nested correlated designs, \cite{barbieri} show that the optimal model {$\bg^o$} is the {\sl median probability model} $\bg^{MP}$. This is defined to be the model consisting of  variables whose marginal inclusion probability $\pi(\gamma_i=1\C\Y)$ is  at least $0.5$. This model can be regarded as the best single-model approximation to model averaging.

{Compared to the often targeted highest posterior model (HPM), a major attraction of the median probability model (MPM) is the {relative} ease with which it can be found via MCMC.  Whereas finding the HPM requires identification of the largest of the $2^p$ distinct model probabilities, finding the MPM { is much less costly, requiring identification only of those of the $p$ marginal covariate probabilities which are greater $0.5$.  And when the HPM and MPM are identical, which may often be the case, the MPM offers a much faster route to computing them both.

The {MPM} is now routinely used for distilling posterior evidence towards variable selection; \cite{scott2008feature,clyde2011bayesian,feldkircher2012forecast,garcia2013sampling,ghosh2015bayesian,piironen2017comparison} and \cite{drachal2018comparison} are
some of the articles that have used and discussed the performance of the MPM. Despite its widespread use in practice, however, the optimality of MPM has so far been shown under comparatively limited circumstances. In particular, the priors  $\pi(\b\C\bg)$ are required to be such that the MPM estimator $\wh{\b}_{\bg}$ is proportional to the MLE estimator under $\bg$. This property will be satisfied by e.g. the {\sl point-mass} spike-and-slab $g$-type priors \citep{zellner,liang2008mixtures}. However, the also very popular {\sl continuous} spike-and-slab mixtures \citep{GM93, ishrawan_rao1,RG14,rockova2017}  will fail to satisfy this requirement.  Here, we will show that this condition is not necessary for MPM to be predictive optimal.   In particular, we provide significant generalizations of the existing MPM optimality results for a wider range of priors such as the continuous spike-and-slab mixtures and, more generally, independent product priors.

Barbieri and Berger \cite{barbieri} presented a situation with correlated covariates (due to Merlise Clyde) in which the MPM was clearly not optimal. Thus
there has been a concern that correlated covariates (reality) might make the MPM practically irrelevant.
Hence another purpose of this paper is to explore the extent to which correlated covariates can degrade the performance of the MPM.
We address this with theoretical studies concerning the impact of correlated covariates, and numerical studies; the magnitude of the
scientific domain here limits us (in the numerical studies) to consider a relatively exhaustive study of the two variable case, made
possible by geometric considerations.
The overall conclusion is that (in reality) there can be a small degradation of performance, but the degradation is less than that
experienced by the HPM in correlated scenarios.

First, using predictive matching arguments \citep{Berg:Peri:01,bayarri_etal,fouskakis2018power}, we provide new arguments for the
suitability of $g$-type priors as opposed to independent product priors. Going further, we highlight the importance of prior model probabilities assignments and discuss their {``dilution"} issues \citep{george2010}  in highly collinear designs.  Introducing the notion of model aggregates, we showcase the somewhat peculiar behavior of separable model priors
obtained with a fixed prior inclusion probability.  We show that the beta-binomial prior copes far better with variable redundancy.
We also characterize the optimal predictive model and relate it to the MPM through relative risk comparisons. We also provide several ``mini-theorems"  showing  predictive (sub)optimality of the MPM when $q=2$.

The paper is structured as follows. Section 2 introduces the notion of model collectives and looks into some interesting limiting behaviors of the MPM when the predictors are correlated.
Section 3 delves into a special case with 2 collinear predictors. Section 4 generalizes the optimality of the MPM to other priors and Section 5 wraps up with a discussion.



\section{The effect of many highly correlated variables and $g$-priors on the median probability model}

\subsection{The marginal likelihood in the presence of many highly correlated variables}
One  reasonable requirement for objective model selection priors is that they be properly matched across models that are indistinguishable from a predictive point of view.
Recall that two models are regarded as {\sl predictive matching} \citep{bayarri_etal} if their marginal likelihoods are close in terms of some distance.
In this section, we take a closer look at the marginal likelihood for the model \eqref{model} under the celebrated $g$-priors \citep{zellner}, assuming that the design matrix  satisfies
\begin{equation}\label{Xmatrix}
\bm X_{n\times (p+k)} = [\bm B_{n\times p}, \ \bm x + \epsilon\, \bm \delta_1, \ \cdots, \ \bm x + \epsilon\, \bm \delta_k]
\end{equation}
for some $\epsilon>0$, where $\bm B$ consists of $p$ possibly correlated regressors and where $\bm \delta_1,\ldots, \bm\delta_k$ are $(n \times 1)$ perturbation vectors.
We assume that $\bm \delta_1,\ldots, \bm\delta_k$ are orthonormal and orthogonal to $\bm x$ and $\bm B,${}\footnote{This assumption is not necessary, but greatly simplifies the illustration.} while $\bm x$ and $\bm B$ are not necessarily orthogonal.
 We will be letting $\epsilon$ be very small to model the situation of having $k$ highly correlated variables.
For the full model \eqref{model}, the $g$-prior is
$$
\b_{(p+k) \times 1} \sim\mathcal{N}_{p+k}\left(\bm{0},g\,\sigma^2(\X'\X)^{-1}\right)
$$
for some $g>0$ (typically $n$) and the corresponding marginal likelihood is
$$ \bm Y \sim \mathcal{N}_{n}\left(\bm{0},\sigma^2 \bm I+g\,\sigma^2\X(\X'\X)^{-1}\X'\right) \,.$$
Note that $\X(\X'\X)^{-1}\X'$ is the projection matrix onto the column space of $\bm X$. Hence, having near duplicate columns in $\bm X$ should not change this matrix much at all. Indeed, the following Lemma shows that, as $\epsilon\rightarrow 0$,
this is  a fixed matrix (depending only on $\bm B$ and $\bm x$).

\begin{lemma}\label{lemma:projection}
Denote with
$
\bm P = \lim\limits_{\epsilon \rightarrow 0} \,\, \X(\X'\X)^{-1}\X'.
$
Then
$$ \bm P= \bm P_{\bm B} + \frac{1}{(\|\bm x\|^2 - \bm x' \bm P_{\bm B} \ \bm x)}
[\bm P_{\bm B} \ \bm x -\bm x][\bm x'\bm P_{\bm B} -\bm x']    \,,$$
where $\bm P_{\bm B} = \bm B (\bm B' \bm B)^{-1}\bm B'$.
\end{lemma}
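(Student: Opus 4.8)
The plan is to recognize the claimed right-hand side as the orthogonal projection onto $\mathrm{span}(\bm B,\bm x)$ and then to compute the limit through a sequential (block) projection decomposition. Writing $\bm Q_{\bm B}=\bm I-\bm P_{\bm B}$ and $\bm u=\bm Q_{\bm B}\bm x=\bm x-\bm P_{\bm B}\bm x$, one checks at once that the denominator is $\|\bm x\|^2-\bm x'\bm P_{\bm B}\bm x=\bm x'\bm Q_{\bm B}\bm x=\|\bm u\|^2$ (using that $\bm Q_{\bm B}$ is symmetric idempotent) and that the outer-product factor is $[\bm P_{\bm B}\bm x-\bm x][\bm x'\bm P_{\bm B}-\bm x']=(-\bm u)(-\bm u')=\bm u\bm u'$. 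So the asserted limit is exactly $\bm P_{\bm B}+\bm u\bm u'/\|\bm u\|^2$, the rank-one update of $\bm P_{\bm B}$ along the residual direction $\bm u$.

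First I would split the full projection via the standard identity $\bm X(\bm X'\bm X)^{-1}\bm X'=\bm P_{\bm B}+\bm P_{\bm Q_{\bm B}\bm V}$, where $\bm V=[\bm x+\epsilon\bm\delta_1,\ldots,\bm x+\epsilon\bm\delta_k]$ and $\bm P_{\bm Q_{\bm B}\bm V}$ projects onto the columns of $\bm Q_{\bm B}\bm V$; this is valid because $\mathrm{col}(\bm Q_{\bm B}\bm V)\perp\mathrm{col}(\bm B)$ and together they span $\mathrm{col}(\bm X)$. Since each $\bm\delta_j\perp\bm B$ gives $\bm Q_{\bm B}\bm\delta_j=\bm\delta_j$, I obtain $\bm W:=\bm Q_{\bm B}\bm V=\bm u\bm 1'+\epsilon\bm\Delta$ with $\bm\Delta=[\bm\delta_1,\ldots,\bm\delta_k]$. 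Using orthonormality of the $\bm\delta_j$ together with $\bm u\perp\bm\delta_j$ (which follows from $\bm\delta_j\perp\bm x,\bm B$), the Gram matrix collapses to $\bm W'\bm W=\|\bm u\|^2\bm 1\bm 1'+\epsilon^2\bm I_k$, whose inverse is available in closed form by Sherman--Morrison as $\epsilon^{-2}\bm I_k-\tfrac{\|\bm u\|^2}{\epsilon^2(\epsilon^2+k\|\bm u\|^2)}\bm 1\bm 1'$. Substituting this together with $\bm W$ into $\bm W(\bm W'\bm W)^{-1}\bm W'$ and letting $\epsilon\to0$ is then a finite computation.

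The main obstacle is the ill-conditioning. As $\epsilon\to0$, $\bm W'\bm W$ degenerates to the rank-one matrix $\|\bm u\|^2\bm 1\bm 1'$, so its inverse has entries of order $\epsilon^{-2}$ and the naive expansion of $\bm W(\bm W'\bm W)^{-1}\bm W'$ contains pieces diverging like $\epsilon^{-1}$. The delicate algebraic step is to group these and exhibit the exact cancellation; concretely, the coefficient $\epsilon^{-2}-k\cdot\tfrac{\|\bm u\|^2}{\epsilon^2(\epsilon^2+k\|\bm u\|^2)}$ collapses to $(\epsilon^2+k\|\bm u\|^2)^{-1}$, which keeps the $\bm u\bm u'$ block finite while sending the mixed $\bm u$--$\bm\delta$ blocks to zero. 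The deeper point, which I would scrutinize before trusting the rank-one form, is whether the perturbation directions themselves survive the limit: because the columns of $\bm W$ differ only by $\epsilon(\bm\delta_i-\bm\delta_j)$, the span contains the normalized difference $\bm\delta_i-\bm\delta_j$ for every $\epsilon>0$, so the limiting column space appears to be $\mathrm{span}(\bm u)$ together with the $(k-1)$-dimensional difference space $\{\bm\Delta\bm a:\bm 1'\bm a=0\}$. I would therefore verify with care whether this difference space contributes; the stated rank-one conclusion is precisely what one recovers once that contribution is shown to vanish (as it does when $k=1$), and I expect this verification to be the crux on which the argument turns.
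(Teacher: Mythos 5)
Your setup is sound and, up to the point where you stop, it is essentially a cleaner version of the paper's own argument: the paper inverts $\bm X'\bm X$ in block form (Schur complement $C\,\bm 1\bm 1'+\epsilon\bm I$ plus Sherman--Morrison) and then asserts that the result follows ``by multiplying this matrix with $\X$ and $\X'$ and taking the limit,'' whereas you first split off $\bm P_{\bm B}$ and reduce everything to the $k\times k$ Gram matrix $\bm W'\bm W=\|\bm u\|^2\bm 1\bm 1'+\epsilon^2\bm I_k$. Both routes lead to the same place. The problem is precisely the question you deliberately left open at the end: whether the difference space $\{\bm\Delta\bm a:\bm 1'\bm a=0\}$ contributes. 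If you push your own computation one step further, you find that it does \emph{not} vanish. Collecting terms in $\bm W(\bm W'\bm W)^{-1}\bm W'$ gives, besides the block $\frac{k}{\epsilon^2+k\|\bm u\|^2}\,\bm u\bm u'$ and mixed $\bm u$--$\bm\Delta$ blocks with coefficient $\frac{\epsilon}{\epsilon^2+k\|\bm u\|^2}=O(\epsilon)$, the term $\bm\Delta\bm\Delta'-\frac{\|\bm u\|^2}{\epsilon^2+k\|\bm u\|^2}\bm\Delta\bm 1\bm 1'\bm\Delta'\rightarrow\bm\Delta\bigl(\bm I_k-\tfrac{1}{k}\bm 1\bm 1'\bigr)\bm\Delta'$, which is the orthogonal projection onto the $(k-1)$-dimensional difference space. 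Hence the actual limit is $\bm P_{\bm B}+\bm u\bm u'/\|\bm u\|^2+\bm\Delta(\bm I_k-\tfrac{1}{k}\bm 1\bm 1')\bm\Delta'$, and the stated rank-one form is correct only when $k=1$. A trace check settles this decisively: $\mathrm{tr}\bigl(\X(\X'\X)^{-1}\X'\bigr)=p+k$ for every $\epsilon>0$ (the design has full column rank because the $\bm\delta_j$ are linearly independent), so by continuity the limit must have trace $p+k$, while the claimed $\bm P$ has trace $p+1$.

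So the gap in your proposal is not one you could have closed by imitating the paper: carrying out the multiplication with the paper's own $(\bm X'\bm X)^{-1}$ produces exactly the same surviving term $\bm\Delta(\bm I_k-\tfrac{1}{k}\bm 1\bm 1')\bm\Delta'$. (Note also that the paper's displayed Gram matrix has $\epsilon\bm I$ in the lower-right block where the model \eqref{Xmatrix} gives $\epsilon^2\bm I$; using $\epsilon$ there while keeping $\epsilon\bm\Delta$ in $\bm X$ is the inconsistency under which the extra term would appear to be $O(\epsilon)$ and vanish.) Your instinct that the perturbation directions ``survive the limit'' is exactly right -- for every $\epsilon>0$ the column space already contains each $\bm\delta_i-\bm\delta_j$ exactly, independently of $\epsilon$, so the limiting column space is $(p+k)$-dimensional, not $(p+1)$-dimensional. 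A complete write-up of your approach would therefore have to either restrict to $k=1$ or amend the statement of the lemma (and its downstream consequences for the limiting marginal likelihood) to include the difference-space projection.
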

\begin{proof}
Let $\bm 1$ be the $k$-column vector of ones, so that $\bm 1\bm 1'$ is the $k \times k$ matrix of ones, and let $ \bm v = \bm B' \bm x$.
Note first that
$$ \bm X' \bm X = \left(
                    \begin{array}{cc}
                     \bm B' \bm B & \bm v \bm 1' \\
                      \bm 1 \bm v'  & \|\bm x\|^2 \bm 1\bm1' +  \epsilon \bm I \\
                    \end{array}
                  \right)
$$
and, letting $C=(\|\bm x\|^2 - \bm v' (\bm B' \bm B)^{-1}\bm v)$, we have $ (\bm X' \bm X)^{-1} =$
$$ \left(
                    \begin{array}{cc}
                   \left( ( \bm B' \bm B)^{-1} +
                    \frac{k}{\epsilon +kC}( \bm B' \bm B)^{-1}\bm v \bm v'( \bm B' \bm B)^{-1}\right) & -(\epsilon+ k C)^{-1} (\bm B' \bm B)^{-1}\bm v\bm 1'  \\
                     -(\epsilon+ k C)^{-1} \bm 1 \bm v' (\bm B' \bm B)^{-1} & \frac{1}{\epsilon}\left(\bm I - \frac{C}{\epsilon +kC}\bm 1\bm 1' \right) \\
                    \end{array}
                  \right) \,.
$$
The result follows by multiplying this matrix with $\X$ and $\X'$, and taking the limit as $\epsilon\rightarrow 0$.
\end{proof}

One important conclusion from Lemma \ref{lemma:projection} is that no matter how many columns of highly correlated variables are present in the model, the marginal likelihood will essentially be $$ \bm Y \sim \mathcal{N}_{n}\left(\bm{0},\sigma^2 \bm I+g\,\sigma^2 \bm P \right)$$
as $\epsilon\rightarrow0$.
Thereby all models including all predictors in $\B$ and at least one replicate of $\x$ can be essentially regarded as predictive matching.

We let $\bg=(\bg_1',\bg_2')'$ denote the global vector of inclusion indicators, where $\bg_1$ is associated with $\B$ and $\bg_2$ is associated with the $k$ near duplicates. The same analysis holds for any sub-model $\bg_1\in\{0,1\}^p$, defined by the design matrix $\bm B_{\bm \gamma_1}$ consisting of the active variables corresponding to the $1$'s in $\bm \gamma_1$.   Before proceeding, we introduce the notion of a  {\sl model collective}
which will be useful for characterizing the properties of $g$-priors and the median probability model in collinear designs.
 \begin{definition}(A model collective)\label{def:aggregate}
 Let $\bg_1\in\{0,1\}^p$ be a vector of inclusion indicators associated with the $p$ variables in $\B$.  Denote by $M_{\bg_1, \bm x}$ the {\sl model collective} comprising all models consisting of
  the $\bm \gamma_1$ variables together with one or more of the (near) duplicates of $\bm x$.
 \end{definition}

 Let $\bm P_{\bm \gamma_1}$ be the
 limiting projection matrix corresponding to any of the
 models inside the model collective $M_{\bg_1,\x}$.
 The limiting marginal likelihood of such models is
 \begin{equation}\label{m_function}
 m(\bm y \mid \bm \gamma_1,\bm x) =\phi\left(\bm y \mid \bm{0},\sigma^2 \bm I+g\sigma^2 \bm P_{\bm \gamma_1} \right),
 \end{equation}
where $\phi(\bm y\mid \bm \mu,\Sigma)$ denotes a multivariate Gaussian density with mean vector $\bm \mu$ and covariance matrix $\Sigma$.

\begin{lemma}\label{lemma:marg}  Let $  m(\bm y \mid \bm \gamma_1)$ denote the marginal likelihood under the model $\bg_1$. Then we have
$ m(\bm y \mid \bm \gamma_1,\bm x) =  m(\bm y \mid \bm \gamma_1) \times m(\bm y\mid \x)$, where
$$m(\bm y\mid \x) = \frac{1}{\sqrt{1+g}}
 \exp\left\{\frac{g}{2\sigma^2(1+g)}\times
 \frac{[{\bm y}'(\bm I- {\bm P}_{{\bm B}_{\gamma_1}}) \bm x]^2}{{\bm x}'(\bm I- {\bm P}_{{\bm B}_{\gamma_1}}) \bm x)} \right\} $$
 and  ${\bm P}_{{\bm B}_{\gamma_1}} = {\bm B}_{\gamma_1} ({\bm B}_{\gamma_1}' {\bm B}_{\gamma_1})^{-1}{\bm B}_{\gamma_1}'$.
Note that, if $\bm x$ is orthogonal to $\bm B$, then
\begin{equation}
\label{eq.orthmarg}
 m(\bm y \mid \bm x) =  \frac{1}{\sqrt{1+g}}
 \exp\left\{\frac{g}{2\sigma^2(1+g)}\times
 \left[{\bm y}' \frac{\bm x}{\|\bm x\|}\right]^2\right\} \,.
 \end{equation}
 \end{lemma}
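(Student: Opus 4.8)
The plan is to compute the ratio $m(\bm y\mid\bm\gamma_1,\bm x)/m(\bm y\mid\bm\gamma_1)$ of the two centered Gaussian densities and verify it equals the claimed $m(\bm y\mid\bm x)$. Both likelihoods have the form $\phi(\bm y\mid\bm 0,\sigma^2\bm I+g\sigma^2\bm P)$ with $\bm P$ a projection: for $m(\bm y\mid\bm\gamma_1)$ the projection is $\bm P_B\equiv\bm P_{\bm B_{\gamma_1}}$, of rank $r=|\bm\gamma_1|$, while for $m(\bm y\mid\bm\gamma_1,\bm x)$ it is the limiting matrix $\bm P_{\bm\gamma_1}$ supplied by Lemma \ref{lemma:projection} applied with $\bm B$ replaced by $\bm B_{\gamma_1}$. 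The $(2\pi)^{-n/2}$ factors cancel in the ratio, leaving a determinant factor and an exponent to evaluate.

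First I would simplify $\bm P_{\bm\gamma_1}$. Writing $\bm u=(\bm I-\bm P_B)\bm x$ for the residual of $\bm x$ after projecting out $\bm B_{\gamma_1}$, the identities $\bm P_B\bm x-\bm x=-\bm u$ and $\|\bm x\|^2-\bm x'\bm P_B\bm x=\bm x'(\bm I-\bm P_B)\bm x=\|\bm u\|^2$ turn the formula of Lemma \ref{lemma:projection} into $\bm P_{\bm\gamma_1}=\bm P_B+\bm u\bm u'/\|\bm u\|^2$. Since $\bm P_B\bm u=\bm 0$, this displays $\bm P_{\bm\gamma_1}$ as the orthogonal projection onto $\mathrm{col}[\bm B_{\gamma_1},\bm x]$, i.e. a rank-one increment on $\bm P_B$ in a direction orthogonal to the range of $\bm P_B$; in particular $\bm P_{\bm\gamma_1}$ has rank $r+1$.

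The computation then splits cleanly. For any projection $\bm P$ of rank $\rho$, the matrix $\sigma^2\bm I+g\sigma^2\bm P$ has eigenvalue $\sigma^2(1+g)$ with multiplicity $\rho$ and $\sigma^2$ with multiplicity $n-\rho$, so its determinant is $\sigma^{2n}(1+g)^{\rho}$ and (using $\bm P^2=\bm P$) its inverse is $\sigma^{-2}[\bm I-\tfrac{g}{1+g}\bm P]$, both checked directly. Writing $\Sigma_1=\sigma^2\bm I+g\sigma^2\bm P_B$ and $\Sigma_2=\sigma^2\bm I+g\sigma^2\bm P_{\bm\gamma_1}$, the determinant factor contributes $(1+g)^{r/2-(r+1)/2}=(1+g)^{-1/2}$, the stated prefactor. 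For the exponent, the inverse formula gives $\Sigma_2^{-1}-\Sigma_1^{-1}=-\tfrac{g}{\sigma^2(1+g)}(\bm P_{\bm\gamma_1}-\bm P_B)=-\tfrac{g}{\sigma^2(1+g)}\,\bm u\bm u'/\|\bm u\|^2$, so $-\tfrac12\bm y'(\Sigma_2^{-1}-\Sigma_1^{-1})\bm y$ collapses to $\tfrac{g}{2\sigma^2(1+g)}(\bm u'\bm y)^2/\|\bm u\|^2$.

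Finally I substitute $\bm u=(\bm I-\bm P_B)\bm x$, using symmetry of $\bm P_B$ to write $\bm u'\bm y=\bm y'(\bm I-\bm P_{\bm B_{\gamma_1}})\bm x$ and $\|\bm u\|^2=\bm x'(\bm I-\bm P_{\bm B_{\gamma_1}})\bm x$; this reproduces $m(\bm y\mid\bm x)$ verbatim and hence the factorization. The orthogonal special case \eqref{eq.orthmarg} is then immediate, since $\bm x\perp\bm B$ forces $\bm P_{\bm B_{\gamma_1}}\bm x=\bm 0$, whence the quadratic form reduces to $(\bm y'\bm x/\|\bm x\|)^2$. There is no genuine obstacle here: the only thing to get right is the eigen/inverse structure of $\sigma^2\bm I+g\sigma^2\bm P$, which makes both the rank-one determinant jump and the rank-one exponent transparent and lets me avoid a messier Sherman--Morrison expansion.
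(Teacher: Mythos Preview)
Your argument is correct. Both you and the paper exploit the same rank-one increment $\bm P_{\bm\gamma_1}=\bm P_B+\bm z\bm z'$ with $\bm z=(\bm I-\bm P_B)\bm x/\sqrt{\bm x'(\bm I-\bm P_B)\bm x}$, but the technical devices differ. The paper treats $\bm I+g\bm P_{\bm\gamma_1}$ as a rank-one perturbation of $\bm I+g\bm P_B$ and invokes the Sherman--Morrison identity and the matrix determinant lemma, together with the observation $(\bm I+g\bm P_B)^{-1}(\bm I-\bm P_B)=(\bm I-\bm P_B)$ to simplify the resulting expressions. You instead observe that \emph{both} covariance matrices have the form $\sigma^2(\bm I+g\bm P)$ with $\bm P$ a projection, so each has the closed-form inverse $\sigma^{-2}[\bm I-\tfrac{g}{1+g}\bm P]$ and determinant $\sigma^{2n}(1+g)^{\mathrm{rank}\,\bm P}$; subtracting the two inverses and dividing the two determinants then gives the answer directly. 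Your route is a bit more elementary here because it uses the projection structure of \emph{both} matrices rather than treating one as a perturbation of the other; the paper's route would generalize more readily if $\bm P_B$ were not itself a projection.
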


\begin{proof}
Letting $\bm P$ denote ${\bm P}_{{\bm B}_{\gamma}} $ and $\bm z = (\bm I- {\bm P}) \bm x / \sqrt{{\bm x}'(\bm I- {\bm P})\bm x}$ , this follows from the identities
\begin{eqnarray*}
(\bm I + g \bm P + g \bm z {\bm z}') ^{-1} &=& (\bm I + g \bm P) ^{-1} - \frac{(\bm I + g \bm P) ^{-1} \bm z {\bm z}' (\bm I + g \bm P) ^{-1}}{[g^{-1} + {\bm z}' (\bm I + g \bm P) ^{-1} \bm z]} \,,\\
|\bm I + g \bm P + g \bm z {\bm z}'| &=& |\bm I + g \bm P| \, (1+ g{\bm z}' (\bm I + g \bm P) ^{-1} \bm z), \, \\
(\bm I + g \bm P)(\bm I - \bm P) &=& (\bm I - \bm P)\,, \, \mbox{so that} \,\, (\bm I + g \bm P)^{-1}(\bm I - \bm P)= (\bm I - \bm P) \,.
\qedhere\end{eqnarray*}

\end{proof}

\begin{remark}
 If $\bm x$ is orthogonal to $\bm B$, the corresponding Bayes estimates are just the usual $g$-prior posterior means
 \begin{equation}
 \frac{g}{1+g}\left(\bm {\widehat \beta}^{MLE}_{\bm \gamma_1} ,  \frac{{\bm x}' \bm y}{\|\bm x\|^2}  \right) \,.
  \end{equation}
Moreover, adding at least one of the near-identical predictors multiplies the limiting marginal likelihood by a constant factor that does not depend on the number of copies.
\end{remark}

\subsection{Dimensional predictive matching}
 As a first application of Lemma \ref{lemma:marg}, we note that the (limiting) marginal likelihood under the $g$-prior is the same, no matter how many replicates of $\bm x$ are in the model.
 This property can be regarded as a variant of  {\sl dimensional predictive matching}, one of the desiderata relevant for the development of objective model selection priors (\cite{bayarri_etal}).
This type of predictive matching across dimensions is, however,  new in the sense that the matching holds for all training samples, not only the minimal ones.

 \begin{corollary}
Mixtures of $g$-priors are {\sl dimensional predictive matching} in the sense that the limiting marginal likelihood of all models within the model collective is the same, provided that the mixing distribution over $g$ is the same across all models.
\end{corollary}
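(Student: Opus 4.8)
The plan is to reduce the mixture statement to the fixed-$g$ fact already recorded immediately after Lemma~\ref{lemma:marg}: the limiting marginal likelihood $m(\bm y\mid\bm\gamma_1,\bm x)$ of \eqref{m_function} depends on a model inside the collective $M_{\bm\gamma_1,\bm x}$ only through $\bm\gamma_1$ and $\bm x$ (via the limiting projection $\bm P_{\bm\gamma_1}$), and in particular \emph{not} through the number $k$ of near-duplicates of $\bm x$. First I would write the marginal likelihood of an arbitrary member of the collective under a mixture of $g$-priors with common mixing density $\pi(g)$ as
$$ m_\pi(\bm y\mid\mathrm{model}) = \int_0^\infty m_g(\bm y\mid\mathrm{model})\,\pi(g)\,\der g, $$
where $m_g$ denotes the fixed-$g$ marginal likelihood at perturbation level $\epsilon$.

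Next I would take $\epsilon\to0$ and push the limit inside the integral, so that each integrand becomes the limiting marginal likelihood \eqref{m_function}. By Lemma~\ref{lemma:marg} this limiting integrand factorizes as $m(\bm y\mid\bm\gamma_1)\,m(\bm y\mid\bm x)$, with both factors manifestly independent of $k$; hence for every fixed $g$ the integrand is literally the same function of $\bm y$ across all models in $M_{\bm\gamma_1,\bm x}$. Integrating one and the same integrand against one and the same mixing density then returns one and the same value, which is precisely the asserted dimensional predictive matching across the collective.

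The only point that needs care is the interchange of the limit $\epsilon\to0$ with the $g$-integral, which I would settle by dominated convergence. Since $\bm X(\bm X'\bm X)^{-1}\bm X'$ is a genuine projection for every $\epsilon>0$, the covariance $\Sigma_g=\sigma^2\bm I+g\sigma^2\bm X(\bm X'\bm X)^{-1}\bm X'$ satisfies $\sigma^2\bm I\preceq\Sigma_g$, whence $|\Sigma_g|\ge\sigma^{2n}$ and the Gaussian integrand is bounded by the constant $(2\pi)^{-n/2}\sigma^{-n}$ uniformly in $g$ and $\epsilon$. This uniform envelope is $\pi$-integrable, so the interchange is legitimate and the corollary follows at once from the $k$-invariance already established for a single $g$. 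I expect this dominating bound to be the main (and only mildly technical) obstacle; the substantive content is entirely inherited from Lemma~\ref{lemma:marg}.
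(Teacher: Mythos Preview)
Your proposal is correct and follows the same route as the paper, which simply records that the corollary ``follows directly from Lemma~\ref{lemma:marg}'': for each fixed $g$ the limiting marginal likelihood is the same across all models in $M_{\bm\gamma_1,\bm x}$, and integrating against a common mixing distribution preserves this equality. Your dominated-convergence justification for interchanging the $\epsilon\to 0$ limit with the $g$-integral is a nice technical addendum that the paper's one-line proof omits, but the substantive argument is identical.
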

\begin{proof} Follows directly from Lemma \ref{lemma:marg}. \end{proof}




In contrast, it is of interest to look at what happens with an alternative prior for $\bm \beta$ such as a $N(\bm 0,\bm I)$ prior. If a model has $j$ near-replicates of $\bm x$, the effective parameter for $\bm x$ in that model is the sum of the $j$ $\beta$'s, which will each have a $N(0,j)$ prior. So the marginal likelihoods will depend strongly on the number of replicates, even though there is no difference in the models.

\subsection{When all non-duplicated covariates are orthogonal}
To get insights into the behavior of the median probability model for correlated predictors, we consider {an instructive} example obtained by  setting $\epsilon=0$ and $\B'\B=\bm I$  in \eqref{Xmatrix}.
 In particular, we will be working with an orthogonal design that has been augmented with multiple copies of one predictor
 \begin{equation}\label{X_aug}
 \X_{n\times (p+k)}=[\x_1,\dots,\x_p,\underbrace{\x,\dots,\x}_{k}],
\end{equation}
where $\x_1,\dots,\x_p,\x$ are orthonormal. A few points are made with this  toy example. First, we want to characterize the optimal predictive model and  generalize the MPM rule when the designs
  have blocks of (nearly) identical predictors.
Second, we want to understand  how close to the optimal predictive model  the MPM actually is  in this limiting case. Third, we want to highlight the benefits of the $g$-prior correlation structure.
 We denote by $z=\x'\y$, $z_i=\x_i'\y$ for $i=1,\dots,p$ and $\z=(\z_1',\z_2')'$, where  $\z_{1}=(z_1,\dots,z_p)'$ and $\z_2=z\bm{1}_{k}$. We will again split the variable inclusion indicators into two groups $\bg=(\bg_1',\bg_2')'\in\{0,1\}^{p+k}$, where
     $\bg_1$ is attached to the first $p$ and $\bg_2$ to the last $k$ predictors.
 To begin, we assume the
 generalized $g$-prior  on regression coefficients, given the model $\bg$,
\begin{equation}\label{gprior}
\b_{\bg}\sim\mathcal{N}_{|\bg|}\left(\bm{0},g\,\sigma^2(\X_{|\bg|}'\X_{|\bg|})^+\right),
\end{equation}
where $(\X_{|\bg|}'\X_{|\bg|})^+$ is the Moore-Penrose pseudo-inverse.
The following lemma characterizes the optimal predictive model under \eqref{gprior} and \eqref{X_aug}.

\begin{lemma}\label{lemma:copies}
Consider the model \eqref{model} where $\X$  satisfies \eqref{X_aug} and where $\x_1,\dots,\x_p,\x$ are orthonormal. Under the prior \eqref{gprior}, any model $\bg^o=(\bg_1^{o\prime},\bg_2^{o\prime})'$ that satisfies
 \begin{align}
 \gamma_{1i}^o&=1\quad\text{iff}\quad \pi(\gamma_{1i}=1\C\Y)>0.5,\quad i=1,\dots,p\label{opt1}\\
 |\bg_2^o|&\geq1\quad\text{iff}\quad \pi(\bg_2\neq \bm{0}\C\Y)>0.5\label{opt2}
 \end{align}
is predictive optimal.
\end{lemma}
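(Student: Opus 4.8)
The plan is to turn the criterion $R(\bg)$ from \eqref{Rcrit} into a sum of independent one-dimensional pieces by exploiting the orthonormality of $\x_1,\dots,\x_p,\x$, and then to minimize each piece separately. Since $\Q=\X'\X$, writing $\bm d_{\bg}=\H_{\bg}\wh\b_{\bg}-\bar\b$ gives $R(\bg)=\bm d_{\bg}'\X'\X\bm d_{\bg}=\|\X\bm d_{\bg}\|^2$, i.e. $R(\bg)$ is the squared Euclidean distance between the model-$\bg$ fitted vector $\wh\y_{\bg}=\X\H_{\bg}\wh\b_{\bg}$ and the model-averaged fitted vector $\bar\y=\X\bar\b=\sum_{\bg}\pi(\bg\mid\Y)\wh\y_{\bg}$. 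Both vectors lie in the column space of $\X$, which by \eqref{X_aug} equals $\mathrm{span}\{\x_1,\dots,\x_p,\x\}$, an orthonormal system. Expanding both fitted vectors in this basis, with coordinates $a_i^{\bg}=\x_i'\wh\y_{\bg}$ and $c^{\bg}=\x'\wh\y_{\bg}$ and posterior-averaged coordinates $\bar a_i,\bar c$, orthonormality yields the decomposition
\begin{equation}\label{eq.decomp-plan}
R(\bg)=\sum_{i=1}^p\left(a_i^{\bg}-\bar a_i\right)^2+\left(c^{\bg}-\bar c\right)^2 .
\end{equation}

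First I would evaluate the $p$ coordinates attached to $\B$. Because $\X_{|\bg|}'\X_{|\bg|}$ is block diagonal (the selected orthonormal columns are orthogonal to every copy of $\x$), the pseudo-inverse $g$-prior in \eqref{gprior} factorizes across the two blocks, and on the full-rank orthonormal block it reduces to the ordinary $g$-prior. Hence $a_i^{\bg}=\tfrac{g}{1+g}z_i$ when $\gamma_{1i}=1$ and $a_i^{\bg}=0$ otherwise, a value that does not depend on the remaining indicators. Averaging over the posterior on $\bg$ gives $\bar a_i=\tfrac{g}{1+g}z_i\,\pi(\gamma_{1i}=1\mid\Y)$, so the $i$-th term of \eqref{eq.decomp-plan} equals $\big(\tfrac{g}{1+g}\big)^2 z_i^2\,\big(I(\gamma_{1i}=1)-\pi(\gamma_{1i}=1\mid\Y)\big)^2$. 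A term of the form $(t-\pi)^2$ with $t\in\{0,1\}$ is minimized by $t=1$ exactly when $\pi>0.5$, which is precisely condition \eqref{opt1}.

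The crux is the coordinate $c^{\bg}$ along the duplicated direction $\x$, where the prior covariance in \eqref{gprior} is genuinely singular. If $\bg_2=\bm 0$ then $c^{\bg}=0$. If $j=|\bg_2|\ge1$ copies are present, the copies block of $\X_{|\bg|}'\X_{|\bg|}$ is $\bm 1_j\bm 1_j'$ with pseudo-inverse $j^{-2}\bm 1_j\bm 1_j'$, so the prior restricts the corresponding coefficients to the line spanned by $\bm 1_j$; writing them as $t\,\bm 1_j$, the scalar $t$ has prior variance $g\sigma^2/j^2$, and the effective coefficient loaded onto $\x$ is $s=jt$ with $s\sim\mathcal N(0,g\sigma^2)$. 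The likelihood depends on the copies only through $s$, via the sufficient statistic $z=\x'\y\sim\mathcal N(s,\sigma^2)$, so the conjugate posterior mean is $\E[s\mid\y]=\tfrac{g}{1+g}z$, independent of the number of copies $j$. Thus $c^{\bg}=\tfrac{g}{1+g}z$ whenever $\bg_2\neq\bm 0$ and $0$ otherwise; this invariance is the delicate point, mirroring the dimensional predictive matching already established in Lemma \ref{lemma:marg}. Averaging gives $\bar c=\tfrac{g}{1+g}z\,\pi(\bg_2\neq\bm 0\mid\Y)$, so the last term of \eqref{eq.decomp-plan} equals $\big(\tfrac{g}{1+g}\big)^2 z^2\,\big(I(\bg_2\neq\bm 0)-\pi(\bg_2\neq\bm 0\mid\Y)\big)^2$ and is minimized by including at least one copy exactly when $\pi(\bg_2\neq\bm 0\mid\Y)>0.5$, which is condition \eqref{opt2}.

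Finally, since \eqref{eq.decomp-plan} is a sum of nonnegative terms governed by the independent binary choices $\{\gamma_{1i}\}_{i=1}^p$ and $I(\bg_2\neq\bm 0)$, the joint minimizer is obtained by minimizing each term separately, which is exactly the model $\bg^o$ described by \eqref{opt1}--\eqref{opt2}; this proves predictive optimality. I would also remark that $R(\bg)$ sees $\bg_2$ only through $I(\bg_2\neq\bm 0)$, so every choice of which and how many copies to include (provided at least one is in) attains the same minimal risk, consistent with \eqref{opt2} constraining only $|\bg_2^o|\ge1$. The main obstacle is the singular-prior computation of $c^{\bg}$: one must verify that the Moore--Penrose $g$-prior loads the shrunken coefficient $\tfrac{g}{1+g}z$ onto $\x$ irrespective of $j$, which is what makes the median-type threshold at $0.5$ the correct rule for the entire model collective.
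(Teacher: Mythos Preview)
Your proposal is correct and follows essentially the same approach as the paper: both exploit the block-orthogonal structure to decompose $R(\bg)$ into a sum over the first $p$ coordinates plus a single term for the duplicated direction, and both hinge on the fact that under the pseudo-inverse $g$-prior the fitted contribution along $\x$ equals $\tfrac{g}{1+g}z$ regardless of how many copies are included. The only difference is cosmetic: the paper works directly in $\bm\beta$-space (stating the formula for $\H_{\bg}\wh\b_{\bg}$ and using the block-diagonal $\Q=\X'\X$), whereas you pass to the fitted-value space $\bm\alpha_{\bg}=\X\H_{\bg}\wh\b_{\bg}$ (the same geometric formulation the paper invokes in Section~3.1) and expand in the orthonormal basis $\{\x_1,\dots,\x_p,\x\}$; your explicit derivation of the singular-block posterior mean via $s=jt$ supplies the one step the paper simply asserts.
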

\begin{proof}
Due to the block-diagonal matrix $\X'\X=\left(\begin{matrix}\mathrm{I}_{p}& \bm{0}_{p\times k}\\ \bm{0}_{k\times p}& \bm{1}_k\bm{1}_k'\end{matrix}\right)$, the posterior mean under the non-null model $\bg$ satisfies
$$
\H_{\bg}\widehat{\b}_{\bg}= \frac{g}{(1+g)}\left(
\begin{matrix}
\mathrm{diag}\{\bg_1\}&\bm{0}\\
\bm{0}&\frac{1}{|\bg_2|}\mathrm{diag}\{\bg_2\}
\end{matrix}
\right)
\z
$$
The overall posterior mean $\bar\b=\E (\b\C\Y)=\sum\limits_{\bg}\pi(\bg\C\Y)\H_{\bg}\widehat{\b}_{\bg}$
then satisfies $\H_{\bg}\widehat{\b}_{\bg}-\bar{\b}= $
$$
\frac{g}{(1+g)}\left(
\begin{matrix}
\mathrm{diag}\{\bg_1-\E[\bg_1\C\Y]\}&\bm{0}\\
\bm{0}&\mathrm{diag}\left\{\frac{\bg_2}{|\bg_2|}-\E\left[\frac{\bg_2}{|\bg_2|}\C\Y,\bg_2\neq \bm{0}\right]\pi(\bg_2\neq\bm{0}) \right\}
\end{matrix}\right)\z
$$
The optimal predictive model minimizes $R(\bg)$ defined in \eqref{Rcrit}.
Due to the fact that $\Q$ is block-diagonal, the criterion $R(\bg)$ separates into two parts, one involving the first $p$ independent variables and the second involving the $k$ identical copies. In particular,
$
R(\bg)=R_1(\bg_1)+R_2(\bg_2)
$
where
\begin{align}
 R_1(\bg_1)=& \frac{g^2}{(1+g)^2}\sum_{i=1}^pz_i^2(\gamma_{1i}-\E[\gamma_{1i}\C\Y])^2\label{R1}\\
R_2(\bg_2)=&\frac{g^2 z^2}{(1+g)^2}\left\{\left[1-\pi(\bg_2\neq \bm{0}\C\Y)\right]^2\mathbb{I}(\bg_2\neq \bm{0})\right.\notag\\
&\left.+\pi(\bg_2\neq \bm{0}\C\Y)^2\mathbb{I}(\bg_2=\bm{0}) \right\}.\label{R2}
\end{align}
The statement then follows from \eqref{R1} and \eqref{R2}.
With duplicate columns, the optimal predictive model $\bg^o\equiv\arg\min\limits_{\bg}R(\bg)$ is not unique. Any model $\bg^o=(\bg_1^{o\prime},\bg_2^{o\prime})'$ defined through \eqref{opt1} and \eqref{opt2}
will minimize the criterion $R(\bg)$.
 \end{proof}

The last $k$ variables in the optimal predictive model thus act jointly as one variable, where the decision to include $\x$ is based on a {\sl joint} posterior probability $\pi(\bg_2\neq \bm{0}\C\Y)$. This intuitively appealing treatment of $\x$ is
an elegant byproduct of the $g$-prior. We will see in the next section that such clustered inclusion no longer occurs in the optimal predictive model under independent product priors.
The risk of the optimal model is
\begin{align*}
R(\bg^o)=&\frac{g^2}{(1+g)^2}\sum_{i=1}^pz_i^2\min\{\E[\gamma_{1i}\C\Y],1-\E[\gamma_{1i}\C\Y]\}^2\\
&+\frac{g^2z^2}{(1+g)^2}\min\{\pi[\bg_2\neq \bm{0}\C\Y],1-\pi[\bg_2\neq \bm{0}\C\Y]\}^2
\end{align*}
Contrastingly,
recall that the median probability model $\bg^{MP}=(\bg_1^{MP\prime},\bg_2^{MP\prime})$ is defined through
$$\gamma^{MP}_i=1\quad \text{iff}\quad\pi(\gamma_i=1\C\Y)>0.5\quad\text{for}\quad i=1,\dots, p+k.$$
The  median probability model  $\bg^{MP}$ thus behaves as the optimal model $\bg^o$ for the first $p$ variables.  For the $k$ duplicate copies, however,  $\bg^{MP}_2$ consists of either {\sl all ones or all zeros}. The MP rule correctly recognizes that the decision to include $\x$ is ultimately dichotomous: either all $\x$'s in or all $\x$'s out.
Moreover, when the median model decides ``all in", it  {\sl will be predictive optimal}. Indeed,  $\pi(\gamma^{MP}_{2i}=1\C\Y)>1/2$ for $i\in\{1,\dots, k\}$ implies $\pi(\bg_2=\bm{0}\C\Y)<1/2$.
The MP model will  deviate from the optimal model only when  $\pi(\gamma^{MP}_{2i}=1\C\Y)<1/2$ and $\pi(\bg_2\neq\bm{0}\C\Y)>1/2$ in which case
\begin{align*}
\frac{R(\bg^{MP})-R(\bg^o)}{R(\bg^o)}&=\frac{R_2(\bg^{MP}_2)-R_2(\bg^{o}_2) }{R(\bg^o)}=\frac{g^2z^2[1-2\pi(\bg_2\neq \bm{0}\C\Y)]}{(1+g)^2[R_1(\bg_1^o)+R_2(\bg^{o}_2)]}.
\end{align*}
The term $R_1(\bg_1^o)$ in \eqref{R1} can be quite large when $p$ is large, implying that the relative risk can be quite small. The MP model is thus not too far away from the optimal predictive model in this scenario.

Several conclusions can be drawn from our analysis of this toy example. First, Lemma \ref{lemma:copies} shows that, in the presence of  perfect correlation, it is the {\sl joint inclusion} rather than marginal inclusion probabilities that guide the optimal predictive model selection. We will elaborate on this property in Section \ref{sec:equicor}, showing that optimal predictive model can be characterized using both posterior means and covariances of $\bg$ (in equicorrelated designs). Second, the clone variables ultimately act  collectively as one variable, which has important implications on the assignment of prior model probabilities. We will elaborate on this important issue in Section \ref{sec:prior_prob}, \ref{sec:posterior_prob} and \ref{sec:dillution}.
Third, purely from a predictive point of view, all  models in the model collective (including at least one $\x$) are equivalent. The $g$-prior here appears to be egualitarian in the sense that it (rightly) treats all these models equally. This property is not retained under  independent product priors, as shown below.

\begin{remark}(Independent Product Priors) Let us replace \eqref{gprior} with an independent prior covariance structure
\begin{equation}\label{indep}
\b_{\bg}\sim\mathcal{N}_{|\bg|}\left(\bm{0},\sigma^2\mathrm{I}_{|\bg|}\right),
\end{equation}
 The posterior mean $\bar\b$  then satisfies $H_{\bg}\widehat{\b}_{\bg}-\bar{\b}=$
{\small$$
\left(
\begin{matrix}
\mathrm{diag}\{\bg_1-\E[\bg_1\C\Y]\}&\bm{0}\\
\bm{0}& \mathrm{diag}\{\bg_2-\E[\bg_2\C\Y]\}-\frac{1}{1+|\bg_2|}\bg_2\bg_2'+\E\left[\frac{1}{1+|\bg_2|}\bg_2\bg_2'\biggl|\,\Y\right]
\end{matrix}\right)\z
$$}
The criterion $R(\bg)=R_1^\star(\bg_1)+R_2^\star(\bg_2)$ again separates into two parts, where
\begin{align}
R_1^\star(\bg_1)&= \sum_{i=1}^pz_i^2(\gamma_{1i}-\E[\gamma_{1i}\C\Y])^2\\
R_2^\star(\bg_2)&=z^2\left[\frac{|\bg_2|}{1+|\bg_2|} - \E\left(\frac{|\bg_2|}{1+|\bg_2|}\,\biggl|\,\Y\right)\right]^2 \,.
\end{align}
The optimal predictive model for the last $k$ variables now has a bit less intuitive explanation. It  consists of any collection of variables of size $|\bg_2^o|$ for which $|\bg_2^o|/[1+|\bg_2|^o]$ is as close as possible to the posterior mean of $|\bg_2|/[1+|\bg_2|]$. It is worthwhile to note that this {\sl does not need to be} the null or the full model. For instance, one can show that $\bg_2^o=\bm{0}$ when $\E[|\bg_2|\C\Y]<1/3$ (or more generally when $\pi(\bg_2=0\C\Y)>2/3)$).
The full model $\bg_2^o=\bm{1}$ will be optimal when $\E[|\bg_2|\C\Y]>k-\frac{k+1}{2k+1}$. Besides these narrow situations, the optimal model $\bg^{o}_2$ will   have a nontrivial size (other than $0$ or $k$).
The median probability model will still maintain the dichotomy by either including all or none of the $x$'s. However, contrary to the $g$-prior it is not guaranteed to be ``optimal" when, for instance, $\bg_2^{MP}=\bm{1}$. 
It seems that the mission of  the optimal model under the independent prior is a bit obscured. It is not obvious why models in the same model collective should be treated differentially and ranked based on their size. The independence prior correlation structure thus induces what seems as an arbitrary identifiability constraint.

\end{remark}

\subsection{Prior Probabilities on Model Collectives}\label{sec:prior_prob}
It has been now standard to assume that each model of dimension $|\bg|$ has an equal prior probability
\begin{equation}\label{eq:prior_equal}
\pi(\bg)=\pi(|\bg|) / {p+k \choose |\bg|},
\end{equation} with $\pi(|\bg|)$ being the prior probability (usually $1/(p+k+1)$) of the collection of models of dimension $|\bg|$.
One of the observations from Lemma \ref{lemma:copies} is that it is the {\sl aggregate} posterior probability $\pi(\bg_2\neq 0\C\Y)$ rather than individual inclusion probabilities $\pi(\gamma_{2i}=1\C\Y)$ that drive the optimal predictive model $\bg^o_2$ in our collinear design. Thereby, it is natural to inspect the aggregate prior probability $\pi(\bg_2\neq \bm{0})$.
We will be using the notion of model collectives  introduced earlier in Definition \ref{def:aggregate}. The number of models of size $j > |\bm \gamma_1|$ in the model collective $M_{\bg_1, \bm x}$ is ${k \choose j-|\bm \gamma_1|}$, so that the prior probability of the model collective $M_{\bg_1, \bm x}$ is
\begin{equation}
\label{eq.probmass}
 \pi(M_{\bg_1, \bm x}) = \sum_{j=|\bm \gamma_1|+1}^{|\bm \gamma_1|+k} \frac{\pi(j)}{{p+k \choose j}} {k \choose j-|\bm \gamma_1|} \,.
 \end{equation}
We investigate the prior probability of the model collective under two usual choices: fixed prior inclusion probability (the separable case) and the random (non-separable) case.

\bigskip \noindent
 {
 {\em The Separable Case:}
 Suppose that all variables have a known and equal prior inclusion probability $\theta=\pi(\gamma_j=1\C\theta)$ for $j=1,\dots, p+k$. Then the probability of the model aggregate, given $\theta$, is
\begin{align}
\label{eq.probmass2}
 \pi(M_{\bg_1, \bm x};\theta) &= \theta^{|\bg_1|}(1-\theta)^{p-|\bg_1|}  \sum_{j=1}^{k} \theta^{j}(1-\theta)^{k-j} {k \choose j}  \\
 &=\theta^{|\bg_1|}(1-\theta)^{p-|\bg_1|}\left[1-(1-\theta)^{k}\right]
 \end{align}
and the  prior probability  of the ``null model" $M_{\bg_1, \bm 0}$ (not including any of the correlated variables) is
\begin{equation}
\label{eq.probmass2}
 \pi(M_{\bg_1, \bm 0};\theta)=\theta^{|\bg_1|}(1-\theta)^{p-|\bg_1|}(1-\theta)^{k}.
 \end{equation}
 The ratio satisfies
 $$
 \frac{ \pi(M_{\bg_1, \bm x};\theta)}{ \pi(M_{\bg_1, \bm 0};\theta)}=\left[\left(\frac{1}{1-\theta}\right)^k-1\right].
 $$
 This analysis reveals a rather spurious property of the separable prior: regardless of the choice $\theta\in (0,1)$, the model aggregate  $M_{\bg_1, \bm x}$ will always have a higher prior probability than the model $M_{\bg_1, \bm 0}$ without any $\x$ in it. Such a preferential treatment for $\x$ is generally unwanted.  We illustrate this issue with the uniform model prior (obtained with $\theta=0.5$) which is still widely used in practice.

\smallskip \noindent
With fixed $\theta=0.5$,  all models have an equal prior probability of $2^{-(p+k)}$. The number of models in the collective $M_{\bg_1, \bm x}$ is
$2^k-1$, and so
$$\pi(M_{\bg_1, \bm x};1/2) = (2^k-1)2^{-(p+k)} = (2^k-1) \, \pi(M_{\bg_1, \bm 0};1/2).$$
The collective can thus have much more prior probability than $M_{\bg_1, \bm 0}$. Furthermore, the marginal prior probability of inclusion of $\x$ is $\sum_{\bm \gamma_1} \pi(M_{\bg_1, \bm x};1/2)= 1- 2^{-k}$. Hence, if $k$ is even moderately large, the prior mass is concentrated on the models which include $\bm x$ as a covariate, and the posterior mass will almost certainly also be concentrated on those models. The model-averaged $\bar {\bm \beta}$ will reflect this, essentially only including models that have $\bm x$ as a covariate.

{\em Beta-binomial Prior:}  It is generally acknowledged \citep{leysteel2007,cui2008empirical,Scott_berger_2010} that assigning equal prior probability to all models is a poor choice, since it  does not adjust for the multiple testing that is effectively being done in variable selection. The common alternative (which does adjust for multiple testing), is replace the separable prior with $\theta\sim\mathcal{B}(a,b)$. Then the prior probability of the model aggregate satisfies
\begin{align*}
\pi(M_{\bg_1, \bm x})&=\int_0^1 \pi(M_{\bg_1, \bm x};\theta)\d\pi(\theta)
=\int_0^1\theta^{|\bg_1|+a-1}(1-\theta)^{p-|\bg_1|+b-1}\left[1-(1-\theta)^{k}\right]\d\theta\\
 &=\mathcal{B}(|\bg_1|+a,p-|\bg_1|+b)-\mathcal{B}(|\bg_1|+a,p+k-|\bg_1|+b)\\
 &=\mathcal{B}(|\bg_1|+a,p+k-|\bg_1|+b)\left[\prod_{j=1}^k\frac{a+b+p+j-1}{p+b-|\bg_1|+j-1}-1\right]
 \end{align*}
and
\begin{equation}
 \pi(M_{\bg_1, \bm 0})=\mathcal{B}(|\bg_1|+a,p+k-|\bg_1|+b).
  \end{equation}
Then
\begin{equation}
\label{eq.priorodds}
\frac{\pi(M_{\bg_1, \bm x})}{\pi(M_{\bg_1, \bm 0})}=\left[\prod_{j=1}^k\left(1+\frac{|\bg_1|+a}{p+b-|\bg_1|+j-1}\right)-1\right].
\end{equation}
This ratio is guaranteed to be smaller than under the separable case with a fixed $\theta$ when $|\bg_1|<(p+b)\theta-a(1-\theta)$. This suggests that the beta-binomial prior can potentially cope better with variable redundancy. We elaborate on this point in the next section. In the forthcoming Lemma \ref{lemma}, we provide an approximation to \eqref{eq.priorodds} as $p$ gets large.

\subsection{Posterior inclusion probabilities}\label{sec:posterior_prob}
In the previous section, we have shown that equal prior model probabilities can be problematic because each model collective $M_{\bg_1,\x}$ receives much more prior mass relative to
$M_{\bg_1,\bm 0}$, essentially forcing the inclusion of $\x$. Going further, we show how this is reflected in the posterior inclusion probabilities.

\begin{lemma}\label{post_inclusion}
Consider the model \eqref{model}, where $\X$ satisfies \eqref{X_aug} and where $\x_1,\dots,\x_p,\x$ are orthonormal. Denote by $z= {\bm y}' \bm x$ and consider the prior \eqref{gprior} with $g=n$ and equal prior model probabilities $\pi(\bg)=1/2^{p+k}$. Then we have
\begin{equation}
\label{eq.equalprobcoll}
\pi(\bg_2\neq \bm 0\C \Y)>1/2\quad\text{iff}\quad z ^2 > \log \left(\frac{\sqrt{1+n}}{2^k-1} \right) 2 \sigma^2 \left(1+\frac{1}{n}\right) \,.
\end{equation}
\end{lemma}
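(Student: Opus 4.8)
The plan is to reduce the posterior statement to a one–dimensional condition on the single–variable marginal likelihood $m(\y\C\x)$ supplied by Lemma \ref{lemma:marg}, exploiting the orthogonality of \eqref{X_aug} so that all contributions from the first $p$ variables factor out and cancel. First I would partition the model space according to $\bg=(\bg_1',\bg_2')'$ and write
$$
\pi(\bg_2\neq\bm{0}\C\Y)=\frac{\sum_{\bg_1}\sum_{\bg_2\neq\bm{0}}\pi(\bg)\,m(\y\C\bg)}{\sum_{\bg_1}\sum_{\bg_2}\pi(\bg)\,m(\y\C\bg)}.
$$
Because $\x_1,\dots,\x_p,\x$ are orthonormal and the last $k$ columns are exact copies of $\x$, every model in the collective $M_{\bg_1,\x}$ shares the same limiting marginal likelihood $m(\y\C\bg_1)\times m(\y\C\x)$ by Lemma \ref{lemma:marg} (in the orthogonal form \eqref{eq.orthmarg}), independently of how many copies of $\x$ it contains, whereas a model with $\bg_2=\bm{0}$ contributes only $m(\y\C\bg_1)$.

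The key simplification is that, with equal prior weights $\pi(\bg)=2^{-(p+k)}$, there are exactly $2^k-1$ nonempty choices of $\bg_2$ for each $\bg_1$, all carrying the identical likelihood, so collecting terms factorizes numerator and denominator through the common sum $\sum_{\bg_1}m(\y\C\bg_1)$, which then cancels. This leaves
$$
\pi(\bg_2\neq\bm{0}\C\Y)=\frac{(2^k-1)\,m(\y\C\x)}{(2^k-1)\,m(\y\C\x)+1},
$$
so that $\pi(\bg_2\neq\bm{0}\C\Y)>1/2$ is equivalent to $(2^k-1)\,m(\y\C\x)>1$, i.e. $m(\y\C\x)>(2^k-1)^{-1}$.

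Finally, I would substitute the closed form $m(\y\C\x)=(1+g)^{-1/2}\exp\{g z^2/[2\sigma^2(1+g)]\}$ (using $\|\x\|=1$ and $z=\y'\x$), take logarithms, and solve the resulting inequality for $z^2$, obtaining $z^2>\log\!\big(\sqrt{1+g}/(2^k-1)\big)\,2\sigma^2(1+g)/g$; setting $g=n$ yields precisely \eqref{eq.equalprobcoll}. I expect no real analytic obstacle: the only substantive step is the cancellation of the $\bg_1$–sum, which rests entirely on the factorization of the marginal likelihood under orthogonality established in Lemma \ref{lemma:marg}, and the remainder is bookkeeping together with a monotone rearrangement of the exponential inequality.
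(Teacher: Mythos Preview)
Your proposal is correct and follows essentially the same route as the paper: factorize the marginal likelihood via Lemma~\ref{lemma:marg}, use the uniform prior to collect the $2^k-1$ identical contributions from nonempty $\bg_2$, cancel the common $\bg_1$-sum to obtain $\pi(\bg_2\neq\bm 0\C\Y)=(2^k-1)m(\y\C\x)/[1+(2^k-1)m(\y\C\x)]$, and then invert the exponential. The paper additionally routes the computation through the model-collective quantities $\pi_{i,\x}^\star$ and $\pi_{i,\bm 0}^\star$, but only because those are reused later for the non-uniform prior; for this lemma the arguments coincide.
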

\begin{proof}
The posterior  probability of joint inclusion $\pi(\bg_2\neq \bm 0\C \Y)$ (noting that $\pi(M_{\bg_1, \bm x})$ and $\pi(M_{\bg_1, \bm 0})$
depend only on $| \bm \gamma_1|$) equals
\begin{eqnarray}
\label{eq.collective-inclusion}
\pi(\bg_2\neq \bm 0\C \Y) &=& \frac{ \sum_{\bg_1} \pi(M_{\bg_1, \bm x})  m(\bm y \mid \bg_1,\bm x)}
{\sum_{\bg_1} \pi(M_{\bg_1, \bm x})  m(\bm y \mid \bg_1,\bm x)+ \sum_{\bg_1} \pi(M_{\bg_1, \bm 0})  m(\bm y \mid \bg_1,\bm 0)} \nonumber \\
&=& \frac{ \sum_{\bg_1} \pi(M_{\bg_1, \bm x})  m(\bm y \mid \bg_1)m(\bm y\mid\bm x) }
{ \sum_{\bg_1} \pi(M_{\bg_1, \bm x})  m(\bm y \mid \bg_1)m(\bm y\mid\bm x) + \sum_{\bg_1} \pi(M_{\bg_1, \bm 0})  m(\bm y \mid \bg_1)  } \nonumber \\
&=& \frac{ m (\bm y \mid \bm x)\sum_{i=0}^p \pi_{i,\x}^\star}
{\sum_{i=0}^p\left[m (\bm y \mid \bm x)\pi_{i,\x}^\star+\pi_{i,\bm 0}^\star\right]} \,,
\end{eqnarray}
with  $\pi_{i,\x}^\star= \sum_{\bg_1:|\bg_1|=i} m(\bm y \mid \bg_1) \,\, \pi(M_{\bg_1, \bm x})$ and  $\pi_{i,\bm 0}^\star= \sum_{\bg_1:|\bg_1|=i} m(\bm y \mid \bg_1) \,\, \pi(M_{\bg_1, \bm 0})$,
and where
$$m (\bm y \mid \bm x) = \frac{1}{\sqrt{1+g}}
 \exp{\left\{\frac{g}{2\sigma^2(1+g)}\times
 \left[{\bm y}' {\bm x}\right]^2 \right\}}
 $$
 and  $m(\y\C\bg_1,\x)=m(\y\C\bg_1)m(\y\C\x)$ was defined in \eqref{m_function}.
When each model has equal prior probability,  this simplifies to
$$
\pi(\bg_2\neq \bm 0\C \Y)  = \frac{(2^k-1) m (\bm y \mid \bm x)}{1+(2^k-1) m (\bm y \mid \bm x)} \,.
$$
With the usual choice $g=n$,  it follows that
$\pi(\bg_2\neq \bm 0\C \Y)  > 0.5$ iff
$
z^2 > \log \left(\frac{\sqrt{1+n}}{2^k-1} \right) 2 \sigma^2 (1+\frac{1}{n}).
$
\end{proof}
From Lemma \ref{post_inclusion} it follows that
 the optimal predictive model (characterized in Lemma \ref{lemma:copies}) will include $\x$ if the number of duplicates  $k$ is large enough, even when $\bm x$ has a small effect ($z$ is small).
Thus, the choice of equal prior model probabilities for optimal predictive model, in the face of replicate covariates, is potentially quite problematical.
If one is only developing a model for prediction in such a situation, such forced inclusion of $\bm x$ is probably suboptimal, but it is only one covariate and so will not typically have a large effect, unless only very small models have significant posterior probability.  For prediction, one could presumably do somewhat better by only considering the first $p+1$ variables in the model uncertainty problem, finding the model averaged $\bar {\bm \beta}$ for this subset of variables.

\
This statement at first seems odd, because we `know' the model averaged answer in the original problem is optimal from a Bayesian perspective. But that optimality is from the internal Bayesian perspective, assuming we believe that the original model space and assignment of prior probabilities is correct. If we really believed -- e.g., that any of $k$ highly correlated genes could be in the model with prior inclusion probabilities each equal to $1/2$ (equivalent to the assumption that all models have equal prior probability) -- then the original model averaged answer would be correct and we should include $\bm x$ in the prediction. At the other extreme, if we felt that only the collection of all $k$ genes has prior inclusion probability of $1/2$, then the result will be like the model averaged answer for the first $p+1$ variables.

To get some feel for things in the general case (non-uniform model prior), suppose $\pi_{i,\bm \x}^\star$ for some $0\leq i\leq p$ is much bigger than the others, so that (\ref{eq.collective-inclusion}) becomes
$$
\pi(\bg_2\neq \bm 0\C \Y)  \approx \frac{ \pi_{i, \bm x}^\star m (\bm y \mid \bm x) }
{ \pi_{i, \bm x}^\star m (\bm y \mid \bm x) +  \pi_{i,\bm 0}^\star }  \,.
$$
Using (\ref{eq.priorodds}), it is immediate that this is bigger than 0.5 if
\begin{equation}
\label{eq.betabinomialodds} 1< \frac{ \pi_{i, \bm x}^\star}{\pi_{i,\bm 0}^\star} \ m (\bm y \mid \bm x) = \left[\prod_{j=1}^k\left(1+\frac{i+a}{p+b-i+j-1}\right)-1\right] m (\bm y \mid \bm x) \,.
\end{equation}
The following Lemma characterizes the behavior of $\frac{ \pi_{i, \bm x}^\star}{\pi_{i,\bm 0}^\star}$ when $p$ gets large.

\begin{lemma}\label{lemma} Suppose $a$ and $b$ are integers. As $p$  gets large with $i$ fixed,
$$ \prod_{j=1}^k\left(1+\frac{i+a}{p+b-i+j-1}\right) = \left(1+\frac{k}{p}\right)^{i+a}
\left(1+\frac{C k}{p(p+k)}\right)\left(1+ O\left(\frac{1}{p^2}\right)\right) \,.
$$
where $C=-(i+a)[b-1-i+(i+a+1)/2]$ ($C=(i^2-i-2)/2$ if $a=b=1$).
To first order,
$$ \prod_{j=1}^k\left(1+\frac{i+a}{p+b-i+j-1}\right) = \left(1+\frac{k}{p}\right)^{i+a}
\left(1+ O\left(\frac{1}{p}\right)\right) \,.
$$
\end{lemma}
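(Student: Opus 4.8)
The plan is to pass to logarithms and produce an asymptotic expansion of $\log P$, where $P = \prod_{j=1}^k\left(1+\frac{i+a}{p+b-i+j-1}\right)$, in powers of $1/p$ carried through the $1/p^2$ term, and then to exponentiate and match this against the logarithm of the claimed right-hand side. Writing $\alpha = i+a$ and $c_j = b-i+j-1$, so that the $j$-th factor is $1 + \alpha/(p+c_j)$, the whole problem reduces to expanding $\sum_{j=1}^k \log\bigl(1+\alpha/(p+c_j)\bigr)$. Since $a,b,i,k$ are fixed integers and only $p$ grows, each summand is a smooth function of $1/p$ near $0$, so termwise Taylor expansion is justified with no convergence concerns.

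First I would expand the argument of each logarithm: $\frac{\alpha}{p+c_j} = \frac{\alpha}{p} - \frac{\alpha c_j}{p^2} + O(1/p^3)$. Feeding this into $\log(1+u) = u - \tfrac12 u^2 + O(u^3)$ and keeping terms through order $1/p^2$ gives $\log\bigl(1+\alpha/(p+c_j)\bigr) = \frac{\alpha}{p} - \frac{\alpha c_j}{p^2} - \frac{\alpha^2}{2p^2} + O(1/p^3)$. Summing over $j=1,\dots,k$ and using the arithmetic sum $\sum_{j=1}^k c_j = k(b-i-1) + \tfrac{k(k+1)}2$ yields $\log P = \frac{\alpha k}{p} - \frac{1}{p^2}\Bigl[\alpha k(b-i-1) + \tfrac{\alpha k(k+1)}{2} + \tfrac{\alpha^2 k}{2}\Bigr] + O(1/p^3)$.

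Next I would expand the target. From $\log(1+k/p) = \frac{k}{p} - \frac{k^2}{2p^2} + O(1/p^3)$ we get $\alpha\log(1+k/p) = \frac{\alpha k}{p} - \frac{\alpha k^2}{2p^2} + O(1/p^3)$, while $\log\bigl(1 + \tfrac{Ck}{p(p+k)}\bigr) = \frac{Ck}{p^2} + O(1/p^3)$ since $\frac{Ck}{p(p+k)} = \frac{Ck}{p^2} + O(1/p^3)$. The $1/p$ terms already agree, so matching the $1/p^2$ coefficients forces $\frac{Ck}{p^2} - \frac{\alpha k^2}{2p^2}$ to equal $-\frac{\alpha k}{p^2}\bigl[(b-i-1)+\tfrac{k+1}{2}+\tfrac{\alpha}{2}\bigr]$; solving for $C$ gives $C = -\alpha\bigl[b-1-i + \tfrac{\alpha+1}{2}\bigr]$, which is exactly the stated value (and collapses to $(i^2-i-2)/2$ when $a=b=1$). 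Exponentiating the identity $\log P = \alpha\log(1+k/p) + \log\bigl(1+\tfrac{Ck}{p(p+k)}\bigr) + O(1/p^3)$ then gives $P = (1+k/p)^{\alpha}\bigl(1+\tfrac{Ck}{p(p+k)}\bigr)\bigl(1+O(1/p^2)\bigr)$, and discarding the $1/p^2$ terms of $\log P$ gives the first-order statement.

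The only delicate point is bookkeeping of the two distinct contributions to the $1/p^2$ coefficient of $\log P$: the linear-in-$c_j$ correction coming from expanding $1/(p+c_j)$, and the $-\tfrac12 u^2$ term from the logarithm; overlooking either one would produce the wrong $C$. After those are correctly assembled, evaluating $\sum_{j=1}^k c_j$ as an arithmetic progression and the final comparison of coefficients are routine algebra. I expect no real obstacle beyond this careful accounting, and I would double-check the result against the stated special case $a=b=1$ as a sanity check.
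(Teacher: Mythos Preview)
Your log-and-Taylor approach is correct under your stated assumption that $k$ is fixed, and it recovers the right value of $C$. The paper, however, takes a different route: it rewrites the $k$-term product via a factorial telescoping,
\[
\prod_{j=1}^k\left(1+\frac{i+a}{p+b-i+j-1}\right)=\prod_{j=1}^k\frac{p+c+j}{p+d-i+j}=\prod_{j=1}^{\,i+a}\frac{p+d+k-i+j}{p+d-i+j}
\]
(with $d=b-1$, $c=d+a$), so that the product now has only $i+a$ factors rather than $k$. Pulling $(1+k/p)$ out of each factor leaves ratios $\dfrac{1+(d-i+j)/(p+k)}{1+(d-i+j)/p}$ in which $d-i+j$ is bounded independently of $k$; expanding these gives the $1+O(1/p)$ and $1+Ck/\bigl(p(p+k)\bigr)+O(1/p^2)$ conclusions \emph{uniformly in $k$}.

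That uniformity is the substantive difference between the two arguments, and it matters here: the lemma is applied immediately afterward in a regime where $k$ grows (the text compares $(1+k/p)^{i+a}$ against $2^k$ as $k$ increases, and the statement pointedly fixes $i$ but not $k$). Your expansion $\alpha/(p+c_j)=\alpha/p-\alpha c_j/p^2+O(1/p^3)$ needs $c_j/p$ small, but $c_j=b-i+j-1$ ranges up to roughly $k$, so the true termwise error is $O(c_j^2/p^3)$ and the summed error is $O(k^3/p^3)$, not $O(1/p^3)$. Thus your method is clean and adequate when $k$ is held fixed, but the factorial trick---trading a product of length $k$ for one of fixed length $i+a$---is precisely what the paper needs to make the asymptotics usable when $k$ is allowed to grow.
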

\begin{proof}
Defining $d=b-1$ and $c=d+a$,
\begin{eqnarray*}
 \prod_{j=1}^k\left(1+\frac{i+a}{p+b-i+j-1}\right) &=& \prod_{j=1}^k \frac{p+c+j}{p+d-i+j} = \frac{(p+c+k)!/(p+c)!}{(p+d+k-i)!/(p+d-i)!}     \\
&=& \frac{(p+c+k)!/((p+d+k-i)!}{(p+c)!/(p+d-i)!} = \prod_{j=1}^{i+a} \frac{p+d+k-i+j}{p+d-i+j} \\
&=& \left(\frac{p+k}{p}\right)^{i+a}\prod_{j=1}^{i+a} \frac{\left(1+ \frac{d-i+j}{p+k}\right)}{\left(1+ \frac{d-i+j}{p}\right)} \,.
\end{eqnarray*}
The first order result follows immediately and the second order result follows from expanding the products in the last term above.
\end{proof}

Utilization of the first order term in  (\ref{eq.betabinomialodds}), and again choosing $g=n$ and assuming $\|\bm x\| = 1$, yields
that the collective has posterior inclusion probability greater than 0.5 if
$$ z^2 > \log \left(\frac{\sqrt{1+n}}{(1+k/p)^{[i+a]}-1} \right) 2 \sigma^2 \left(1+\frac{1}{n}\right) \,.
$$
Note that this is much less likely to be satisfied than (\ref{eq.equalprobcoll}), when $k$ grows, since $(1+k/p)^{[i+a]}$ is then much smaller than $2^k$; thus having many duplicate $\bm x$'s does not ensure that $\bm x$ will be included in the model, as it was in the equal model probability case.


\subsection{The Dilution Problem}\label{sec:dillution}

{Sets of predictors, which are highly correlated with each other, become proxies for one another in our linear model (1).  This quickly leads to an excess of redundant models, each of which is distinguished only by including a different subset of these.  To prevent this cluster of redundant models from accumulating too much posterior probability, dilution priors may be considered, \cite{george2010}.  Such priors first assign a reasonable amount of prior mass to the entire cluster, and then dilute this mass uniformly across all subset models within this cluster.

For example,} to smear out the prior aggregation on  $M_{\bg_1,\bm x}$, one might like to consider different inclusion probabilities.
 Let $[\bm{x}_1,\dots,\bm{x}_p]$ have a prior inclusion probability $\theta_1$ and each of the $\x$ clones have a prior inclusion probability $\theta_2$.
 With
 \begin{equation}\label{theta2}
 \theta_2=1- (1-\theta_1)^{1/k}
\end{equation}
 we have
\begin{equation}
 \pi(M_{\bm \gamma_1, \bm x}) =\theta_1^{|\bg_1|}(1-\theta_1)^{p-|\bg_1|}\left[1-(1-\theta_2)^{k}\right]=\theta_1^{|\bg_1|+1}(1-\theta_1)^{p-|\bg_1|}.
 \end{equation}
 and
 $$
  \pi(M_{\bm \gamma_1, \bm 0})  =\theta_1^{|\bg_1|}(1-\theta_1)^{p-|\bg_1|+1}.
 $$
Assuming \eqref{theta2}, variables with correlated copies have smaller inclusion probabilities (the more copies, the smaller the probability). This may correct the imbalance between  $ \pi(M_{\bm \gamma_1, \bm x}) $ and $ \pi(M_{\bm \gamma_1, \bm 0}) $ by treating the  multiple copies of $\x$ essentially as one variable. This prior allocation would put $\x$ on an equal footing with $\x_1,\dots,\x_p$
in the optimal predictive model rule (based on $\pi(\bg_2\neq \bm 0\C\Y)$), but would disadvantage $\x$ in the median probability model.
From our considerations above, it would seem that  there is a fix to the dilution problem in our synthetic example (with clone $\x$'s). However,   general recommendations for other correlation patterns are far less clear.

\section{ The Case of Two Covariates}

\subsection{The geometric representation}

The situations analyzed in previous sections may also be considered from a geometric perspective.
Define
$\mbox{\boldmath $\alpha$}_{\mbox{\footnotesize\boldmath $\gamma$}}$ as the projection of $\Y$ onto the space
spanned by the columns of ${\bf X}_{\mbox{\footnotesize\boldmath
$\gamma$}}$,
$\mbox{\boldmath $\alpha$}_{\mbox{\footnotesize\boldmath
$\gamma$}}={\bf X} \, {\bf H}_{\mbox{\footnotesize\boldmath $\gamma$}} \,
\widehat{\mbox{\boldmath $\beta$}}_{\mbox{\footnotesize\boldmath
$\gamma$}}= {\bf X}_{\mbox{\footnotesize\boldmath $\gamma$}}({\bf
X}_{\mbox{\footnotesize\boldmath $\gamma$}}^{'} {\bf
X}_{\mbox{\footnotesize\boldmath $\gamma$}})^{-1} \, {\bf
X}_{\mbox{\footnotesize\boldmath $\gamma$}}^{'} \, \mbox{\boldmath
$Y$}$,  and $\bar{\mbox{\boldmath
$\alpha$}}={\bf X}\,\bar{\mbox{\boldmath $\beta$}}=
\sum_{\mbox{\footnotesize\boldmath $\gamma$}}
p_{\mbox{\footnotesize\boldmath $\gamma$}} \, {\bf X}\, {\bf
H}_{\mbox{\footnotesize\boldmath $\gamma$}} \,
\widehat{\mbox{\boldmath $\beta$}}_{\mbox{\footnotesize\boldmath
$\gamma$}}$, where $p_{\bg}=\pi(\bg\C\Y)$.
The expected posterior loss
(\ref{Rcrit}) to be minimized may be written as
\begin{displaymath}
R(\bg)= \left({\bm{\alpha}}_{\bg}-\bar{\bm{\alpha}}\right)'\left({\bm{\alpha}}_{\bg}-\bar{\bm{\alpha}} \right).
\end{displaymath}

This implies that the preferred model will be the one whose
corresponding $\mbox{\boldmath
$\alpha$}_{\mbox{\footnotesize\boldmath $\gamma$}}$ is nearest to
$\bar{\mbox{\boldmath $\alpha$}}$ in terms of Euclidean distance.

To geometrically formulate the predictive problem, each model $M_{\mbox{\footnotesize\boldmath $\gamma$}}$ may be represented by
the point $\mbox{\boldmath $\alpha$}_{\mbox{\footnotesize\boldmath
$\gamma$}}$ and the set of models becomes a collection of
points in $q$-dimensional space. The convex hull of these points
is a polygon representing the set of possible model averaged
estimates $\bar{\mbox{\boldmath $\alpha$}}$, as the
$p_{\mbox{\footnotesize\boldmath $\gamma$}}$ vary over their range.
Any point in this polygon is a possible optimal predictive
model, depending on $p_{\mbox{\footnotesize\boldmath $\gamma$}}$'s.
The goal is to geometrically characterize when each single
model is optimal, given that a single model must be used.

Consider the simple situation in which we have two covariates
$x_1$ and $x_2$ and four possible models: \bdm M_{10}:\{x_1\}
\hspace{2cm} M_{01}:\{x_2\} \hspace{2cm} M_{11}:\{x_1,x_2\}, \edm
and the null model $M_{00}$. These can be
represented as four points in the plane.

Depending on the sample correlation structure, the polygon region, whose
vertices are $\mbox{\boldmath $\alpha$}_{00}$, $\mbox{\boldmath $\alpha$}_{10}$, $\mbox{\boldmath $\alpha$}_{01}$ and $\mbox{\boldmath $\alpha$}_{11}$ (i.e. the convex hull of
all possible posterior means $\bar{\mbox{\boldmath $\alpha$}}$), can have
four  distinct forms. Each situation may be characterized in terms of the correlations between the variables involved, as summarized in Table \ref{tab:char}, where $r_{12}=Corr(x_1, x_2)$, $r_{1y}=Corr(x_1, Y)$ and $r_{2y}=Corr(x_2, Y)$.

\begin{table}[ht]
\begin{center}
\begin{tabular}{||c|c|c|c||}\hline\hline
$r_{12}=0$ & $r_{12}\frac{r_{1y}}{r_{2y}} <0$ & \multicolumn{2}{c|}{$r_{12}\frac{r_{1y}}{r_{2y}} >0$}\\\hline
 && $|r_{12}|< \min
\{|\frac{r_{1y}}{r_{2y}}|, |\frac{r_{2y}}{r_{1y}}|\}$ &
$|\frac{r_{1y}}{r_{2y}}|<|r_{12}|$\\\cline{3-4}
 orthogonal & case 1 & case 2 & case 3\\
\hline\hline
\end{tabular}
\end{center}
\caption{Possible scenarios in terms of $r_{12}$ and the ratio
$\frac{r_{1y}}{r_{2y}} $.} \label{tab:char}
\end{table}

\begin{figure}[!ht]
     \subfigure[Orthogonal]{
     \begin{minipage}[t]{0.22\textwidth}
       \hskip-1pc  \scalebox{0.23}{\includegraphics{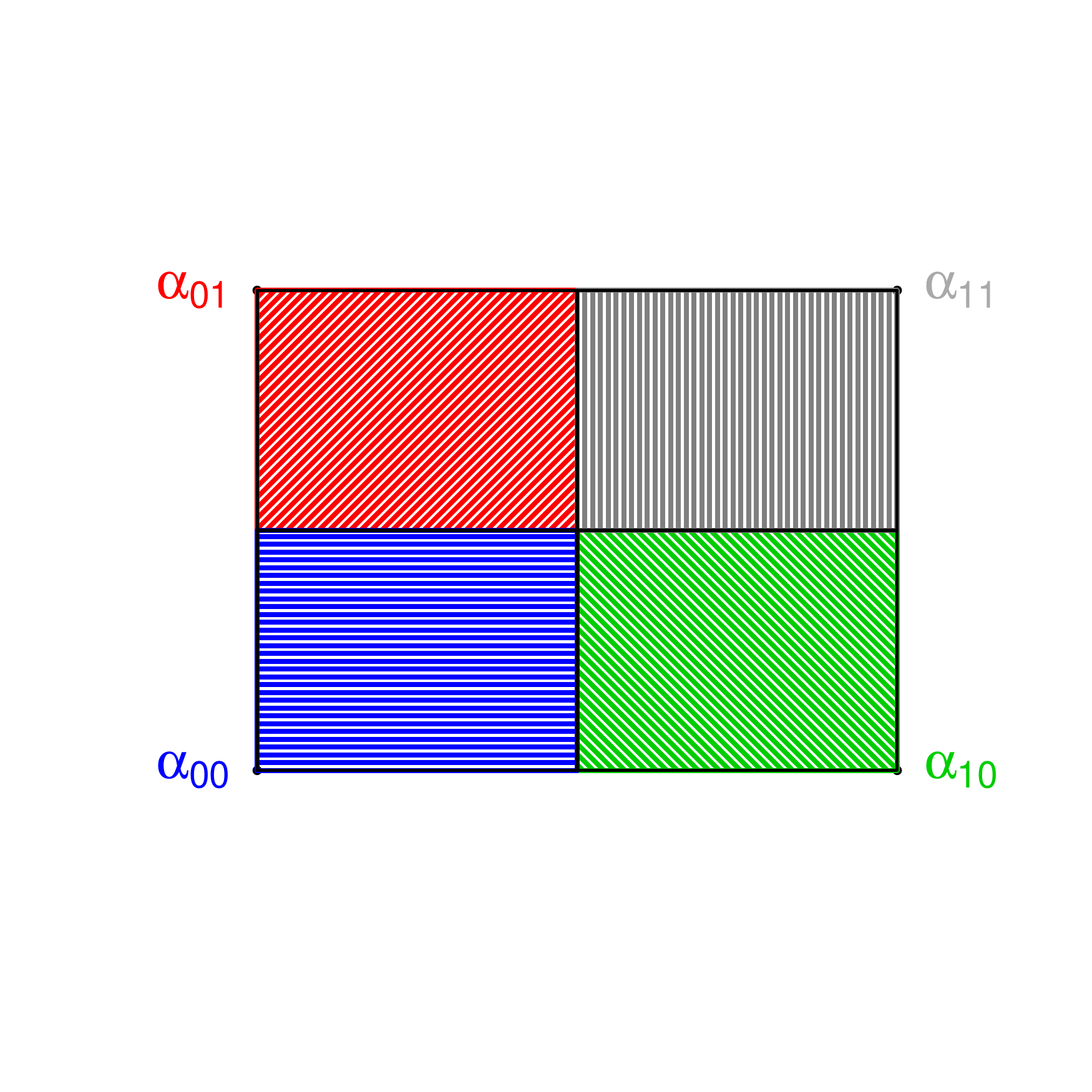}}  \label{fig:morthogonal}
    \end{minipage}}
     \subfigure[Case 1]{
    \begin{minipage}[t]{0.22\textwidth}
    \hskip-1pc  \scalebox{0.23}{\includegraphics{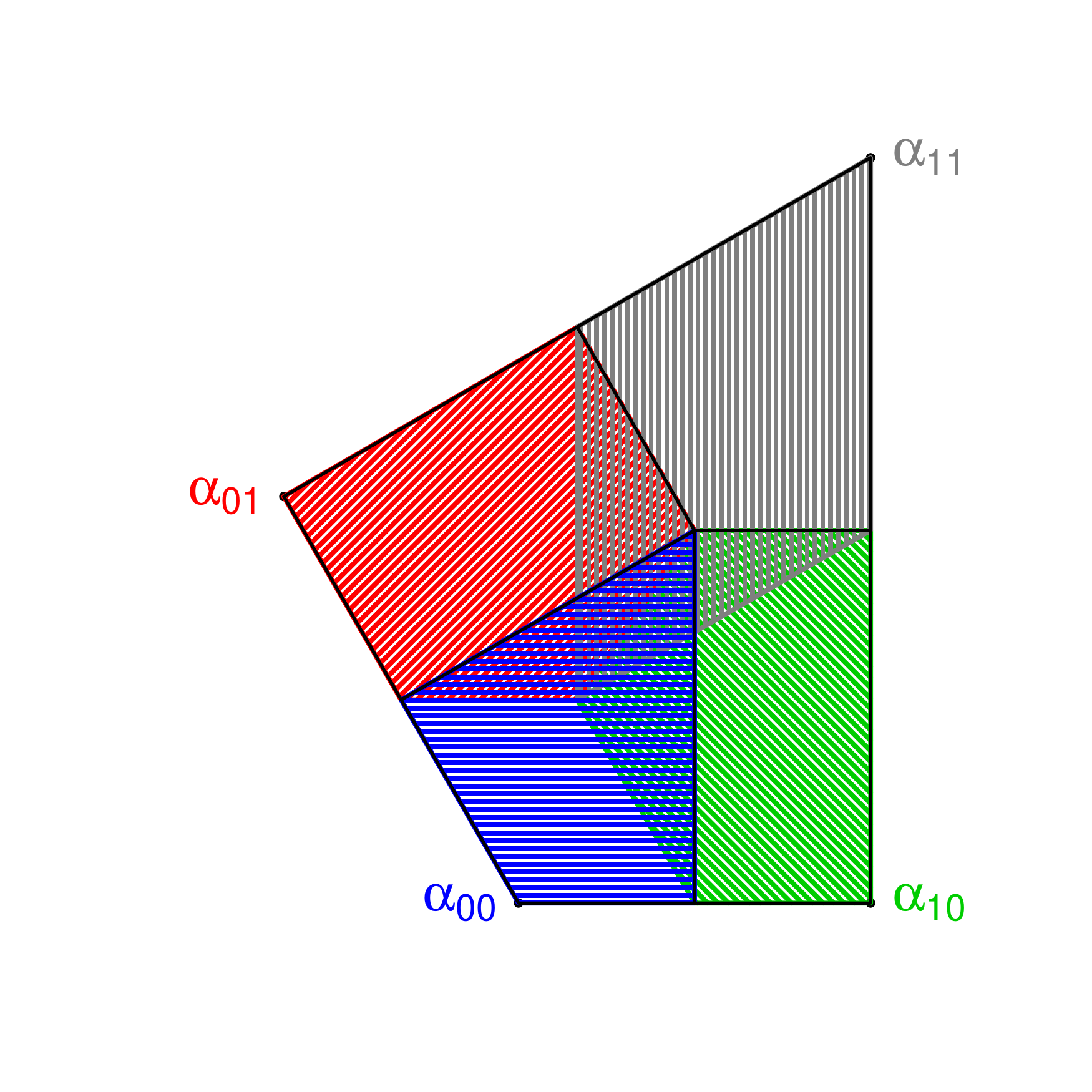}}     \label{fig:mcase1}
    \end{minipage}}
     \subfigure[Case 2]{
    \begin{minipage}[t]{0.22\textwidth}
     \hskip-1pc \scalebox{0.23}{\includegraphics{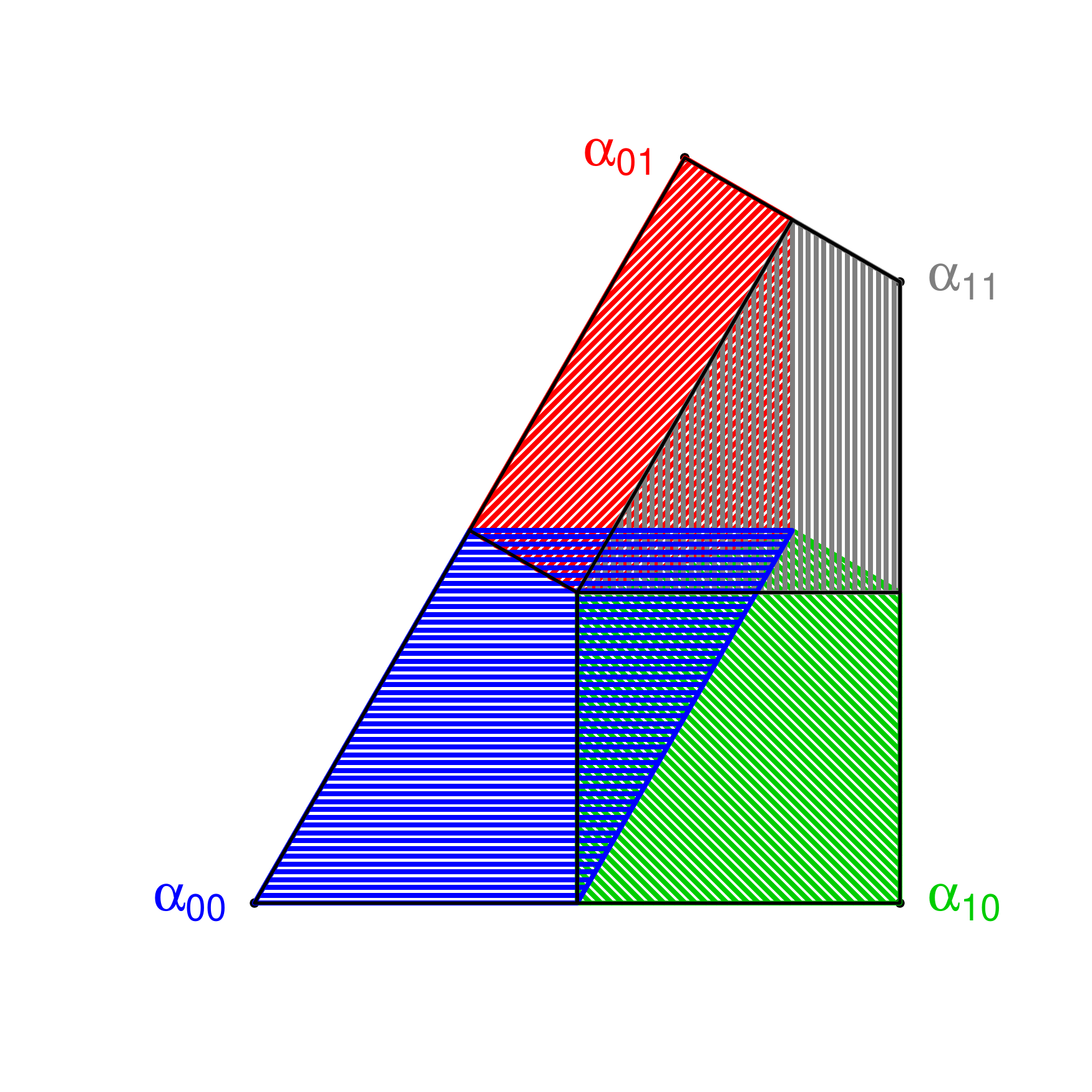}}   \label{fig:mcase2}
    \end{minipage}}
    \subfigure[Case 3]{
    \begin{minipage}[t]{0.22\textwidth}
     \hskip-1pc \scalebox{0.23}{\includegraphics{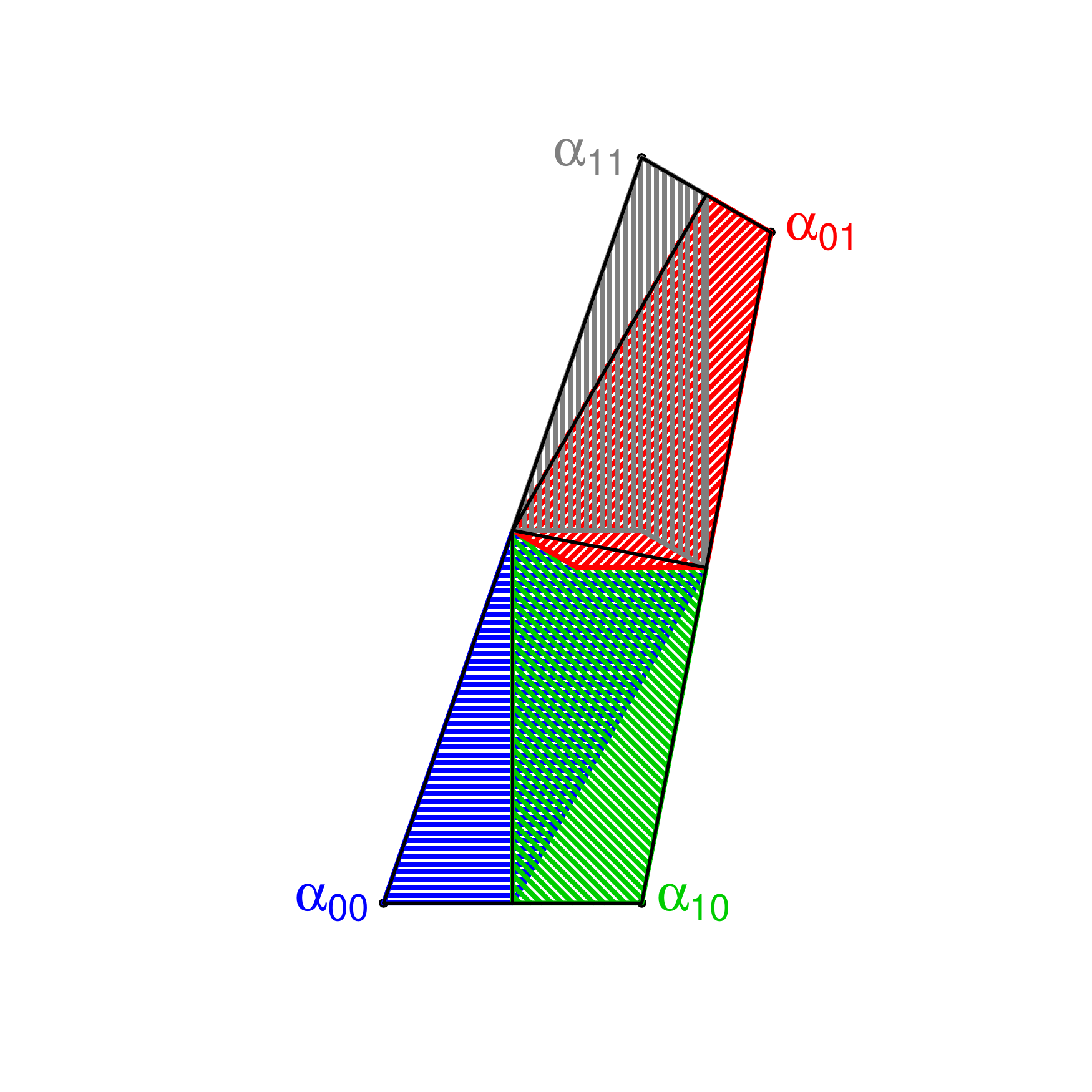}}   \label{fig:mcase3}
    \end{minipage}}
    \caption{The median posterior probability model}    \label{fig:med}
\end{figure}

In Figure \ref{fig:med} the four forms are plotted for the case $|r_{12}|=0.5.$ (Ignore the colors for now.)
In particular, the values of the correlations here are:

\begin{tabular}{llll}
Case 1 & $r_{12}=-0.5$ & $r_{1y}=0.3$ & $r_{2y}=0.4$\\
Case 2 & $r_{12}=0.5$ & $r_{1y}=0.3$ & $r_{2y}=0.4$\\
Case 3 & $r_{12}=0.5$ & $r_{1y}=0.1$ & $r_{2y}=0.3$\\
\end{tabular}

The angles ${\bf \alpha}_{00}{\bf \alpha}_{10}{\bf
\alpha}_{11}$ and ${\bf \alpha}_{00}{\bf \alpha}_{01}{\bf
\alpha}_{11}$ are always right angles, since $({\bf
\alpha}_{10}-{\bf \alpha}_{00})^{'}({\bf \alpha}_{11}-{\bf
\alpha}_{10})=0$ and $({\bf \alpha}_{01}-{\bf
\alpha}_{00})^{'}({\bf \alpha}_{11}-{\bf \alpha}_{01})=0$ [the projection of $\mbox{\boldmath $\alpha$}_{11}$ on the line spanned by $\mbox{\boldmath $\alpha$}_{10}$ is $\mbox{\boldmath $\alpha$}_{10}$ itself and similarly for $\mbox{\boldmath $\alpha$}_{01}$].

The solid lines divide the figures into the four optimality subregions associated with the four models, namely the sets of those $\bar{\mbox{\boldmath $\alpha$}}$ which are closer to one of the $
\bm{\alpha}_{\bg}$.

The colors in Figure \ref{fig:med} indicate the regions where the average model point (the best model averaged answer) could lie if the model with the corresponding color is the median posterior probability. In the orthogonal case, the model averaged answer and model optimality regions always
coincide, i.e., the MPM is always optimal. In the other cases, this need not be so. In Case 1, for instance, the red region extends into the (blue)
null model's optimality region; thus $M_{01}$ could be the MPM, even when the null model is optimal. Likewise the green region extends into the
optimality region of the null model, and the grey region (corresponding to the full model) extends into the optimality regions of all other models. Only the null model is fine here; if the null model is the MPM, it is guaranteed to be optimal.

\subsection{Characterizations of the Optimal Model}

For the case of two correlated covariates, we obtained partial characterizations of the optimal predictive model and the median probability model. These are summarized by the ``mini-theorems" below.
\begin{theorem}(``Mini-theorems")\label{thm:mini}
Consider the model \eqref{model} with $q=2$ and the three cases described in Table \ref{tab:char}. Then the following statements hold.
\begin{enumerate}
\item In Case 1, if $M_{00}$ is the median, it is optimal.
\item In Case 2, if $M_{11}$ is the median, it is optimal.
\item In Case 1 and 3, if at most one variable has posterior inclusion probability
larger than $1/2$, $M_{11}$ cannot be optimal.
\item In Case 2 and 3, if at least one variable has posterior inclusion probability
larger than $1/2$, $M_{00}$ cannot be optimal.
\item In Cases 1 and 2, if $M_{00}$ or $M_{11}$ has posterior inclusion probability larger than
$0.5$, it is optimal.
\item In any case, if $M_{00}$ or $M_{11}$ has posterior inclusion probability larger than
$0.5$, the other cannot be optimal.
\item In Cases 3, if $M_{00}$ or $M_{11}$ has posterior inclusion probability smaller than
$0.5$ it cannot be optimal.
\end{enumerate}
\end{theorem}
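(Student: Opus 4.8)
The plan is to argue entirely inside the geometric picture of Section~3.1, where each model is the point $\bm\alpha_{\bg}$, the optimal model minimizes $R(\bg)=(\bm\alpha_{\bg}-\bar{\bm\alpha})'(\bm\alpha_{\bg}-\bar{\bm\alpha})$, and $\bar{\bm\alpha}=\sum_{\bg}p_{\bg}\bm\alpha_{\bg}$ with $p_{\bg}=\pi(\bg\C\Y)$. I would first record the two structural facts already noted in the text: $\bm\alpha_{00}=\bm 0$, and the right angles at $\bm\alpha_{10}$ and $\bm\alpha_{01}$ subtending the segment $\bm\alpha_{00}\bm\alpha_{11}$. By Thales' theorem the latter forces all four points onto a common circle whose diameter is $\bm\alpha_{00}\bm\alpha_{11}$; let $O=\tfrac12\bm\alpha_{11}$ be its center and $\rho=\tfrac12\|\bm\alpha_{11}\|$ its radius.

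The key reduction exploits this concyclicity. Writing $\bm\alpha_{\bg}=O+\rho\,\bm v_{\bg}$ with unit vectors $\bm v_{\bg}$ and $\bar{\bm\alpha}=O+\rho\,\bm w$, where $\bm w=\sum_{\bg}p_{\bg}\bm v_{\bg}$, a one-line computation gives $\|\bar{\bm\alpha}-\bm\alpha_{\bg}\|^2=\rho^2(\|\bm w\|^2-2\langle \bm w,\bm v_{\bg}\rangle+1)$. Hence the optimal model is exactly the one maximizing $\langle \bm w,\bm v_{\bg}\rangle$ (equivalently, the optimality regions are angular sectors from $O$). Since $\bm v_{00}=-\bm v_{11}$, every pairwise comparison becomes a sign condition on a linear form in the $p_{\bg}$; for instance $M_{00}$ beats $M_{11}$ iff $\langle\bm w,\bm v_{11}\rangle=p_{11}-p_{00}+p_{10}\cos\phi_1+p_{01}\cos\phi_2<0$, where $\phi_1,\phi_2$ are the angular positions of $\bm v_{10},\bm v_{01}$ relative to $\bm v_{11}$.

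This immediately disposes of the case-free statement (6): if $p_{00}>\tfrac12$ then $\langle\bm w,\bm v_{11}\rangle\le 1-2p_{00}<0$ because $\cos\phi_i\le 1$, so $M_{11}$ cannot be optimal, and symmetrically for $p_{11}>\tfrac12$; no case information is used. For the remaining statements I would translate each hypothesis into constraints on $(p_{00},p_{10},p_{01},p_{11})$ via $\pi(\gamma_1=1\C\Y)=p_{10}+p_{11}$ and $\pi(\gamma_2=1\C\Y)=p_{01}+p_{11}$ (and the model-probability thresholds $p_{\bg}>\tfrac12$ where relevant), and then verify the required $\arg\max$ inequalities over the constrained sub-simplex. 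Statement~(5) adds to~(6) the comparisons $M_{00}$ vs $M_{10},M_{01}$ (resp.\ $M_{11}$ vs $M_{10},M_{01}$), which is where the restriction to Cases~1--2 enters through the signs of $\cos\phi_1,\cos\phi_2$; statements~(1)--(4) and~(7) reduce in the same way to two or three sign checks on linear forms whose coefficients are the $\cos\phi_i$ and $\langle\bm v_{10},\bm v_{01}\rangle$.

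The step I expect to be the real obstacle is pinning down, for each of Cases~1, 2 and~3, the admissible ranges of $\phi_1,\phi_2$ (equivalently the cyclic order of the four points and on which side of the diameter $\bm\alpha_{10},\bm\alpha_{01}$ fall) purely from the defining inequalities on $r_{12}$ and $r_{1y}/r_{2y}$ in Table~\ref{tab:char}. This requires expressing the projections $\bm\alpha_{10},\bm\alpha_{01},\bm\alpha_{11}$ of $\Y$ in terms of the three sample correlations and reading off the signs of $\cos\phi_1,\cos\phi_2$ and of $\langle\bm v_{10},\bm v_{01}\rangle$: the sign of $r_{12}\,r_{1y}/r_{2y}$ controls whether $\bm\alpha_{10},\bm\alpha_{01}$ lie on the same or opposite sides of the diameter, while $|r_{12}|$ versus $|r_{1y}/r_{2y}|$ controls the finer ordering that separates Case~2 from Case~3. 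Once these ranges are fixed the remaining arguments are routine sign checks, so the bookkeeping across the three cases, together with the careful handling of the strict-versus-weak boundary and of degenerate configurations (coincident points or collinearities), is where the care is needed.
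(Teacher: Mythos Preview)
Your approach is correct in outline and genuinely different from the paper's. The paper proceeds by fixing explicit planar coordinates
\[
\bm\alpha_{00}=(0,0),\quad \bm\alpha_{10}=(a,0),\quad \bm\alpha_{01}=(b,c),\quad \bm\alpha_{11}=(a,d),
\]
with $a,b,c,d$ written in terms of $r_{12},r_{1y},r_{2y}$, and then decomposes the convex hull into triangular subregions $S_1,\dots,S_5$. For each triangle it computes the barycentric weights of $\bar{\bm\alpha}$; the risk differences $R(M_{\bg})-R(M_{\bg'})$ are shown to be positive multiples of simple linear forms in these weights, which in turn reduce to inequalities involving $A_1=r_{12}\,r_{1y}/r_{2y}$, $A_2=r_{12}\,r_{2y}/r_{1y}$ and $B_i=A_i(1-A_{3-i})/(1-A_i)$. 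The three Cases are then read off directly from the signs of $A_1,A_2,B_1,B_2$, and each mini-theorem is a one-line check against those signs.

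Your concyclicity observation (via Thales) and the reduction to ``maximize $\langle\bm w,\bm v_{\bg}\rangle$'' are valid and give a cleaner, coordinate-free picture; your proof of (6) is slicker than the paper's, and the description of the optimality cells as angular sectors is correct. What the paper's route buys is that the hard step you flag---identifying the angular configuration in each Case---becomes mechanical: the quantities $\cos\phi_1,\cos\phi_2,\langle\bm v_{10},\bm v_{01}\rangle$ you need are rational functions of $a,b,c,d$, and the Case distinctions in Table~\ref{tab:char} are precisely the sign patterns of $A_1,A_2,B_1,B_2$. If you carry your plan through you will end up computing essentially the same quantities; the two approaches meet at the same linear inequalities in $(p_{00},p_{10},p_{01},p_{11})$, just parametrized differently.
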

\proof Appendix 1.

The motivation for developing these mini-theorems was to generate possible theorems that might hold
in general. Unfortunately, in going to the three-dimensional problem, we were able to develop
counterexamples (not shown here) to each of the mini-theorems.

\subsection{Numerical Study of the Peformance of the MPM}

{\begin{table}[h!]
\centering
\medskip
\scalebox{0.8}{ \begin{tabular}{|l|r|r|r|r|r|r|r|r|} \hline
  & M=H=O & M=H$\neq$O & M=O$\neq$H & H=O$\neq$M &H$>$M &  M$>$H & GM $\frac{R(\bg^{M})}{R(\bg^o)}$& GM $\frac{R(\bg^{H})}{R(\bg^o)}$ \\
   &   &  &  & & both $\neq$ O  & both $\neq$ O & &  \\ \hline\hline
       \multicolumn{9}{c}{Cases combined: Full model scenario}\\ \hline\hline
n=10&404 &93 &27 &3 &4$^\star$&3$^\star$ &1.08 &1.10\\
n=50&505 &8 &13 &0 &8$^\star$ &0 &1.02 &1.06 \\
n=100&512 &8 &14 &0 &0 &0 &1.02 &1.06\\
    \hline \hline
    Overall & 1421 &109 &54 &3 &12 &3 &1.03 &1.07  \\  \hline \hline
      & 88.7$\%$ & 6.8$\%$ & 3.4$\%$ & 0.2$\%$ & 0.7$\%$ & 0.2$\%$ &&\\ \hline \hline
             \multicolumn{9}{c}{Cases combined: $\beta_1 = 0$ and $\beta_2 \neq 0$ scenario}\\ \hline\hline
n=10&424 &169 &18 &2 &3$^\star$&4$^\star$ &1.104 &1.114\\
n=50&661 &57 &3 &1 &9 &0 &1.054 &1.049 \\
n=100&682 &65 &1 &0 &0 &1$^\star$ &1.045 &1.046\\
    \hline \hline
    Overall & 1767 &291 &22 &3 &12 &5 &1.065 &1.067  \\  \hline \hline
    & 84.1$\%$ & 13.9$\%$ & 1.1$\%$ & 0.1$\%$ & 0.6$\%$ & 0.2$\%$ &&\\\hline \hline
      \multicolumn{9}{c}{Cases combined: Null model scenario}\\ \hline\hline
n=10&470 &178 &50 &2 &8&7$^\star$ &1.106 &1.120\\
n=50&682 &98 &13 &3 &0 &2$^\star$ &1.039 &1.045 \\
n=100&735 &60 &5 &1 &0 &1 &1.023 &1.024\\
    \hline \hline
    Overall & 1887 &336 &68 &6 &8 &10 &1.054 &1.060  \\  \hline \hline
    & 81.6$\%$ & 14.5$\%$ & 2.9$\%$ & 0.3$\%$ & 0.3$\%$ & 0.4$\%$ &&\\\hline \hline
 \end{tabular}}
\caption{A summary of the numerical study in the two variable case. Legend: H = HPM, M = MPM, O = optimal predictive model;  M$>$H means that MPM has smaller \eqref{Rcrit} than HPM; H$>$M means that HPD has smaller \eqref{Rcrit} than MPM; and GM is the geometric mean of relative risks (to the optimal model) when MPM or HPM is not optimal.} \label{table-summary}
{$^*$ Curiously, the optimal model, $O$, is the {\em lowest} probability model in these cases. }
\end{table}}

We present a numerical study that investigates the extent to which the MPM and HPM agree, and how often they differ from the optimal predictive model. The goal was to devise a study that effectively spans the entire range of correlations that are possible and this
was easiest to do by limiting the study to the two-dimensional case. The study considered
the following correlations and sample sizes:
\begin{itemize}
\item $r_{12}$ varies over the grid \\
{\small $\{-0.9, -0.8, -0.7, -0.6, -0.5, -0.4, -0.3, -0.2, -0.1, 0.1, 0.2, 0.3, 0.4, 0.5, 0.6, 0.7, 0.8, 0.9\}$.} \\
($r_{12}=0$ was not considered because the MPM is guaranteed to be optimal then.)
\item $r_{1y}$ and $r_{2y}$ vary over ranges meant to span the range of likely data
under either the full model, one-variable model, or null model; the
description (and derivation)
of the various correlation ranges is given in Appendix 2.
\item Sample sizes $n=10,50$ and $100$ are considered.
\item Equal prior probabilities are assumed for the four models.
\item The unit information $g$-prior is used for the parameters.
\item We consider the more realistic scenario where the variance $\sigma^2$ is unknown and assigned the usual objective prior $1/\sigma^2$,
risks being computed in this setting.
\end{itemize}
The reason the numerical study is conducted in this way is to reduce the dimensionality of the problem. In terms of ordinary inputs,
one would have to deal with a study over the space of $\bm x_1$, $\bm x_2$, $\bm \beta_1$, $\bm \beta_2$, and the random error vector $\bm  \varepsilon$ (or $\bm Y$).
But, because the predictive Bayes risks only depend on $r_{12}$, $r_{1y}$ and $r_{2y}$, we can reduce the study to a three dimensional problem.
And, since these are simply correlations, we can choose a grid of values for each that essentially spans the space of
possibilities in the 5-dimensional problem. The details of this are given in Appendix 2.

 Tables \ref{table-full}, \ref{table-onevariable} and \ref{table-null}, in Appendix 2,  summarize some features of the simulation study, for the
   correlation scenarios under the full model, the one-variable model, and the null model, respectively. Those tables present the results separately for the Case 1, Case 2, and  Case 3 situations. It is very clear from these tables that the Case 1 scenario is very
   favorable for the MPM -- it is then virtually always the optimal model -- while, in Cases 2 and 3, the MPM fails to be the optimal model
   in roughly 12\% of the cases. This is a useful result if one is in the two-variable situation, since it is easy to determine if one
   is in Case 1 or not. Alas, it is not known how to generalize this to larger dimensions.

  Table \ref{table-summary} summarizes the results, over the three cases, for each of the model correlation scenarios (full, one-variable, and null). The table reports how often the MPM and HPM equal the
  optimal predictive model ($O$), i.e., the model minimizing \eqref{Rcrit},
  and presents geometric averages of relative risks of the MPM and HPM to $O$.

  Here are some observations from Table \ref{table-summary}:
  \begin{itemize}
  \item Simpler models are more challenging for the MPM (and HPM); indeed, $MPM=O$ in 92.1\%, 85.2\%, and 84.5\% of
  the cases for the full, one-variable, and null model, respectively; still, these are high success rates, given that
  correlations vary over the full feasible spectrum.
  \item As would be expected, both the MPM and HPM do better with larger sample sizes.
  \item The vast majority of the time, the MPM and HPM are the same model but, when they differ, the MPM is typically better:
  \begin{itemize}
  \item On average, the MPM does better than the HPM (from the $M=O \neq H$ and $M > H$ columns) in 2.7\% of the cases; while the HPM does better than the MPM in 0.7\% of the cases.
      \item When the MPM and HPM are not optimal, the geometric average of the MPM risk (relative to that of $O$) is smaller than the
      geometric average for the HPM.
      \end{itemize}
  \end{itemize}

    \begin{figure}[!t]
     \subfigure[Case 1]{
     \begin{minipage}[t]{0.31\textwidth}
       \hskip-1pc  \scalebox{0.26}{\includegraphics{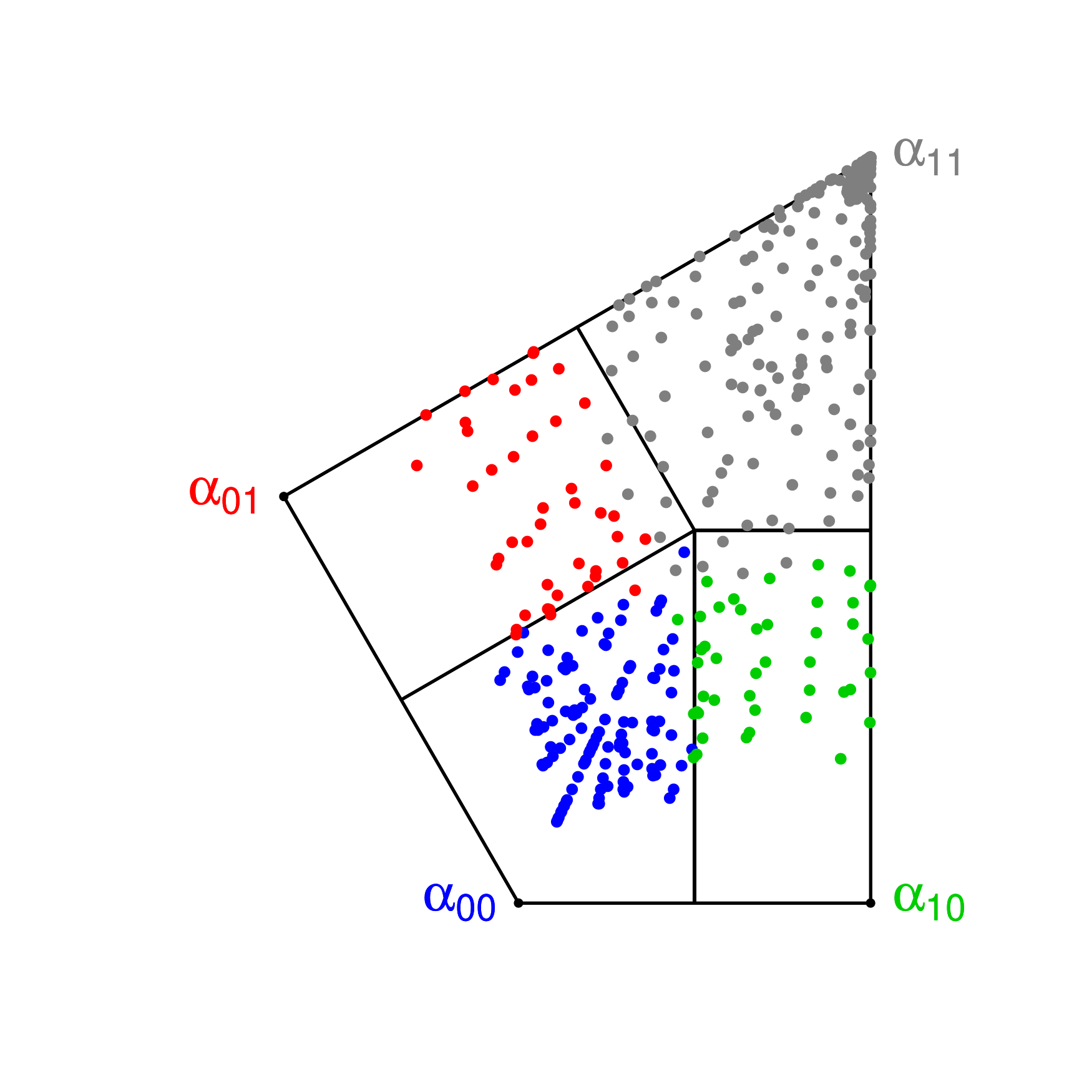}}
    \end{minipage}}
     \subfigure[Case 2]{
    \begin{minipage}[t]{0.31\textwidth}
    \hskip-1pc  \scalebox{0.26}{\includegraphics{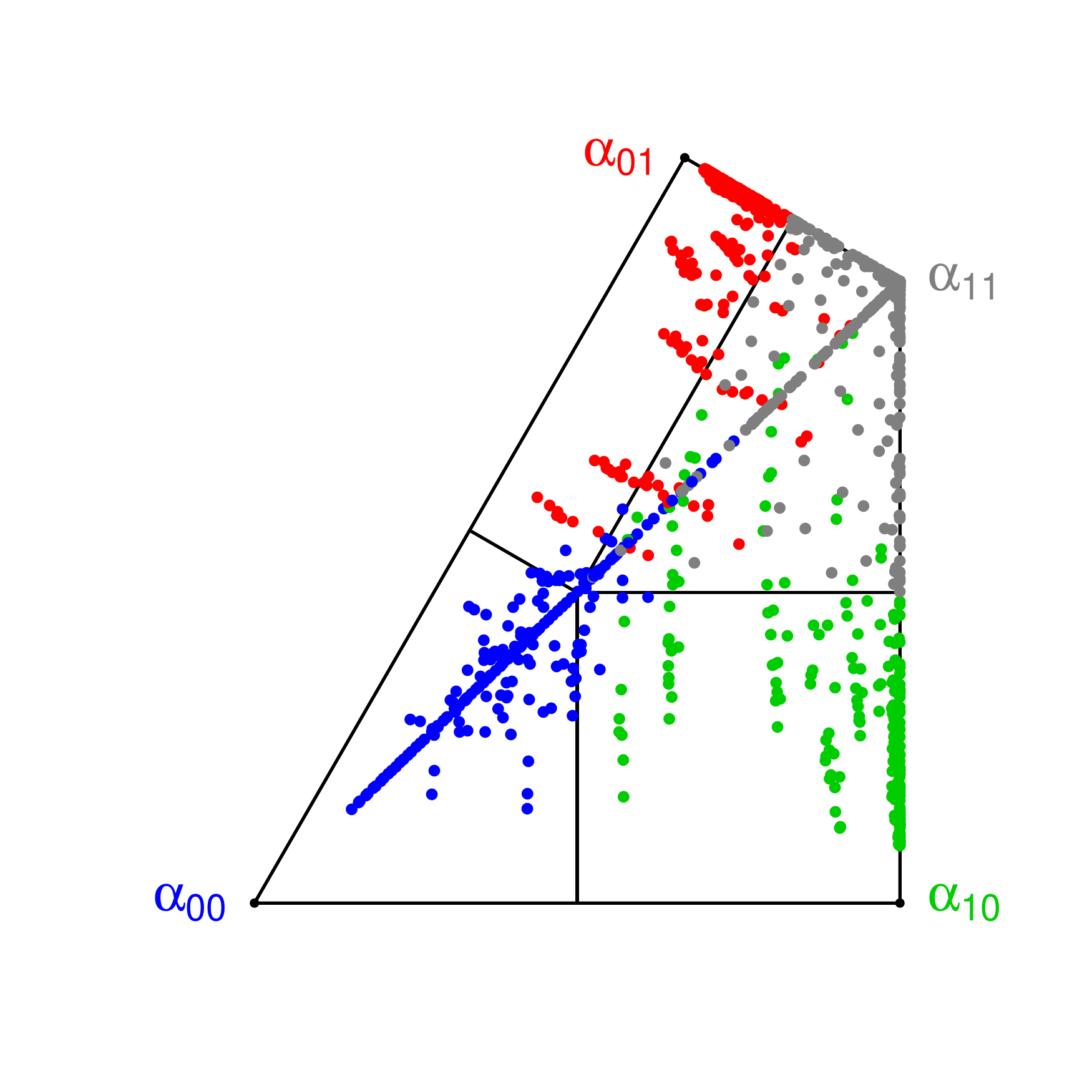}}
    \end{minipage}}
     \subfigure[Case 3]{
    \begin{minipage}[t]{0.31\textwidth}
     \hskip-1pc \scalebox{0.26}{\includegraphics{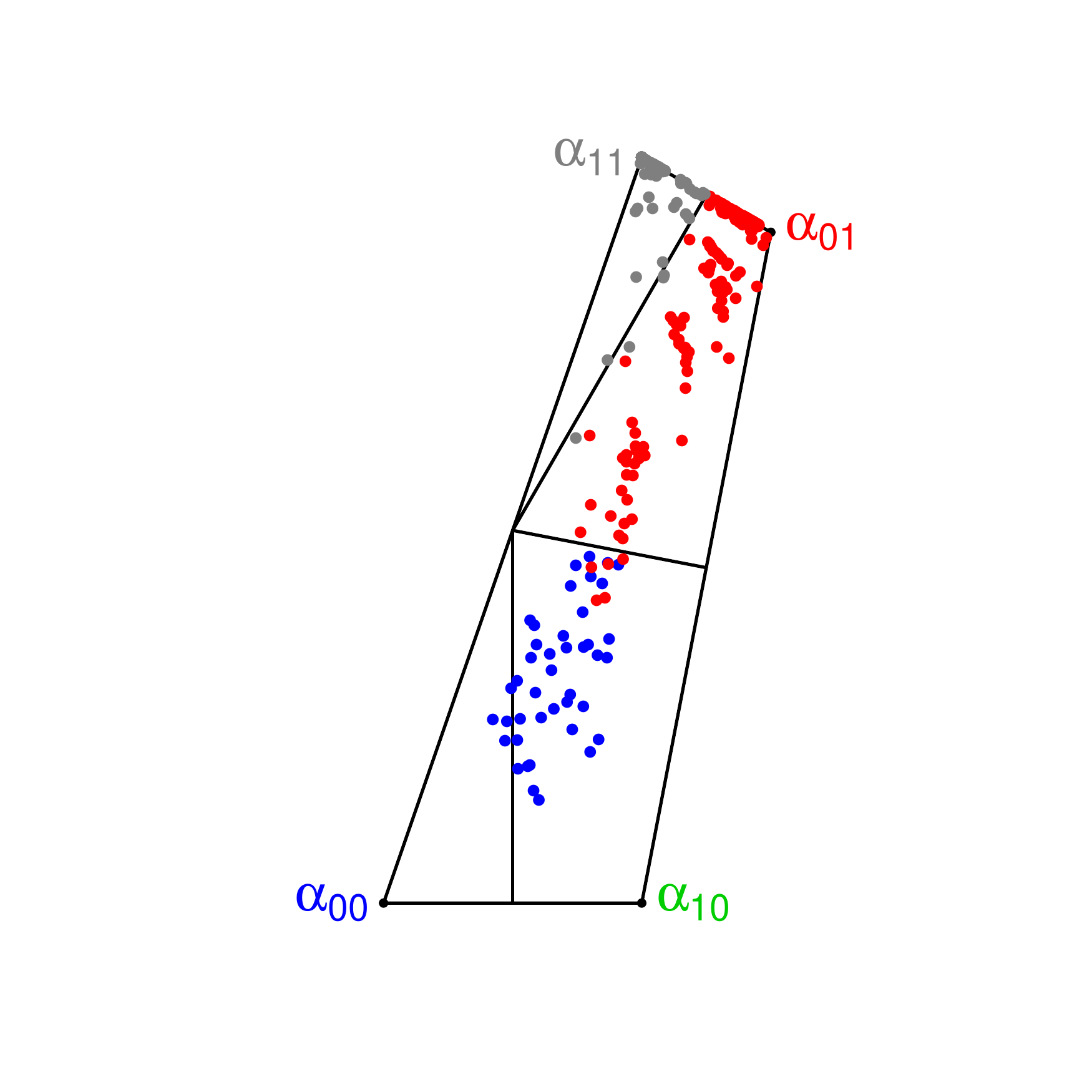}}
    \end{minipage}}
    \caption{Results from the numerical study under the full model correlation scenario: each dot has the color of the MPM,
     with the MPM being optimal (or not) if it lies within (or outside) the quadrilateral with external vertex of the same color.}    \label{fig:med-fulltot}
\end{figure}
    \begin{figure}[!t]
     \subfigure[Case 1]{
     \begin{minipage}[t]{0.31\textwidth}
       \hskip-1pc  \scalebox{0.26}{\includegraphics{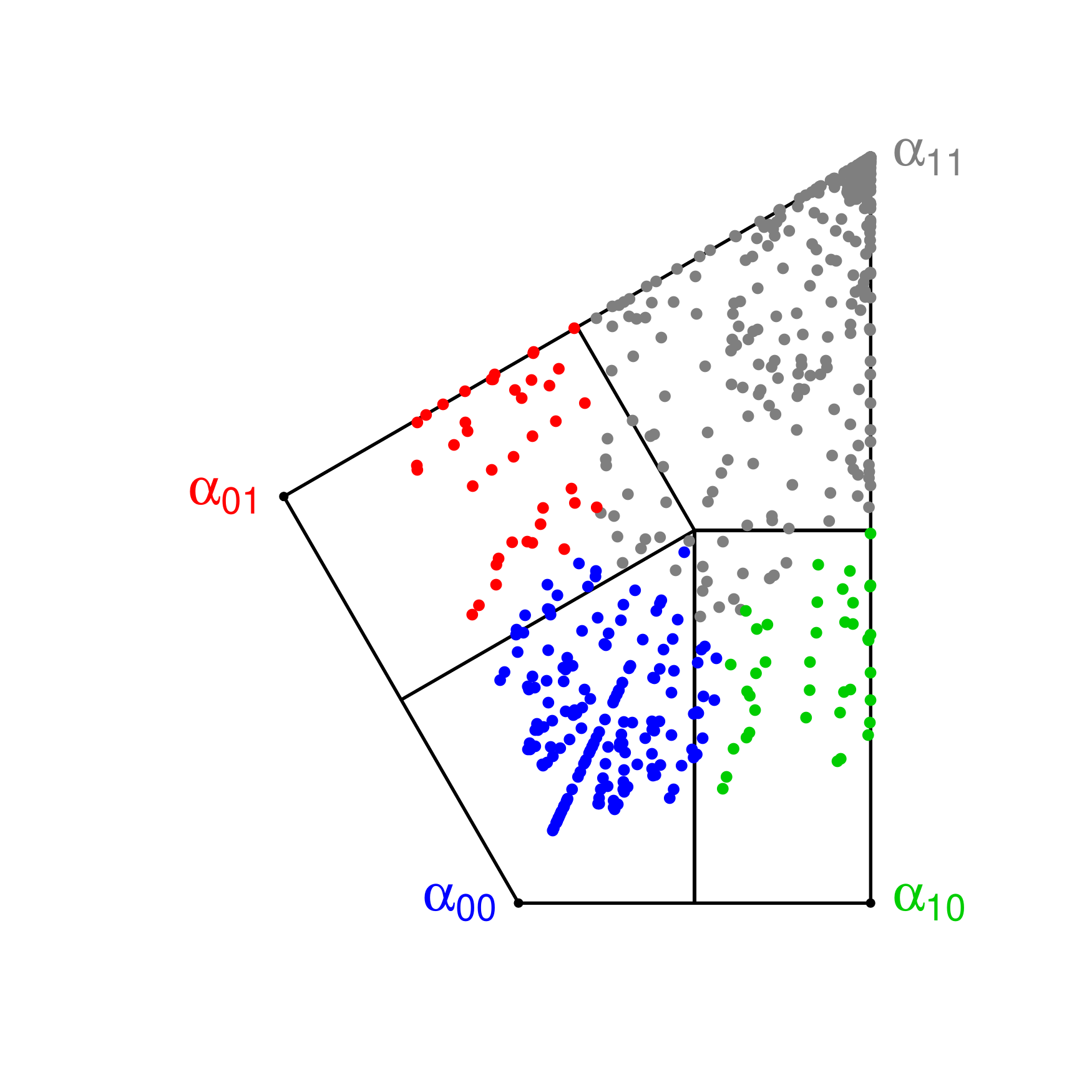}}
    \end{minipage}}
     \subfigure[Case 2]{
    \begin{minipage}[t]{0.31\textwidth}
    \hskip-1pc  \scalebox{0.26}{\includegraphics{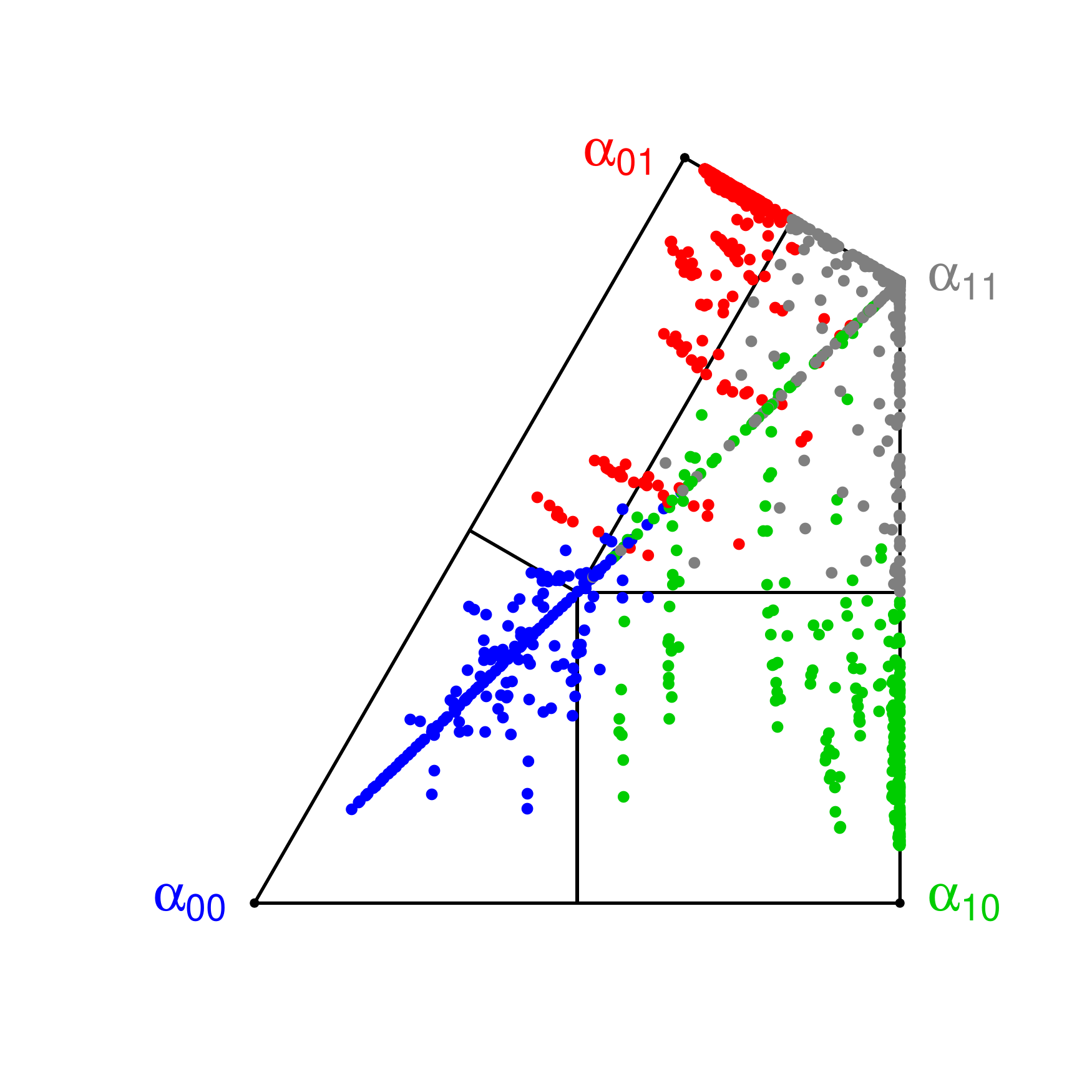}}
    \end{minipage}}
     \subfigure[Case 3]{
    \begin{minipage}[t]{0.31\textwidth}
     \hskip-1pc \scalebox{0.26}{\includegraphics{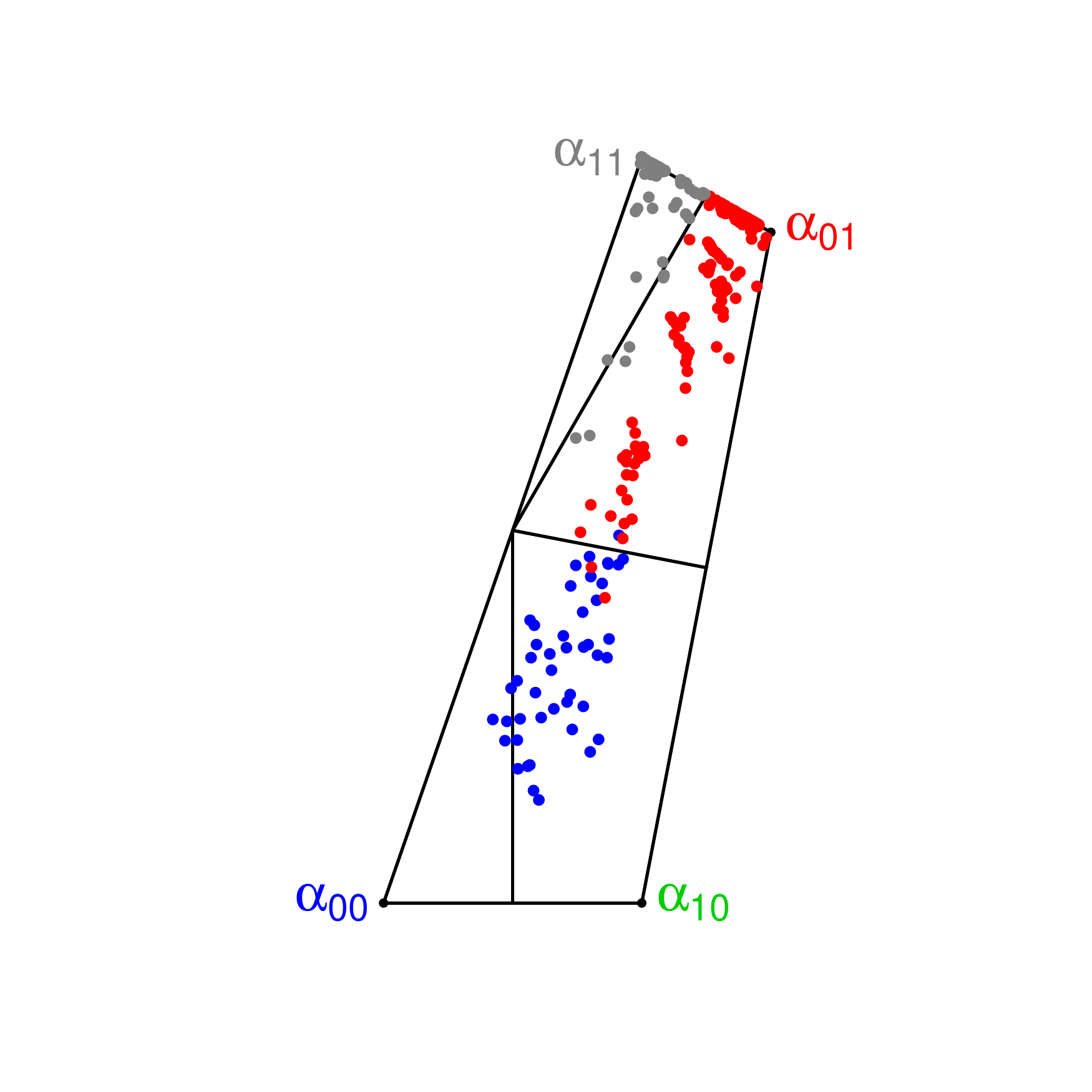}}
    \end{minipage}}
    \caption{Results from the numerical study under the full model correlation scenario: each dot has the color of the HPM,
     with the HPM being optimal (or not) if it lies within (or outside) the quadrilateral with external vertex of the same color.}    \label{fig:high-fulltot}
\end{figure}

  Additional insight can be gained by looking at the nature of the `failures' of the MPM and HPM. Figure \ref{fig:med-fulltot}, for the MPM, and
  Figure \ref{fig:high-fulltot}, for the HPM, show the errors being made, in the numerical study, for each of Case 1, Case 2 and Case 3, under the the full model correlation scenario. Focusing
  on the MPM for explanation, the color of the dots in Figure \ref{fig:med-fulltot} indicates which model was the median probability model; thus a blue dot indicates that the median probability model was $M_{00}$, because that is the color of $\alpha_{00}$. As before, the true optimal
  model for a dot is the external vertex defining the quadrilateral in which the dot lies; thus, if the blue dot lies within the quadrilateral with
  $\alpha_{00}$ as the external vertex, the MPM is the optimal model, while if the blue dot lies within the quadrilateral for which $\alpha_{10}$
  is the external vertex,
  the MPM is incorrectly saying that $M_{00}$ is optimal, when actually $M_{10}$ is optimal.

  The figures reinforce the earlier messages; Case 1 is nice for the MPM and HPM (almost all the colored dots are in the quadrilateral with
  the external vertex being of the same color), while Case 2 and, especially, Case 3 here are problematical -- in Case 3, the MPM is typically $M_{00}$ when $M_{10}$ is optimal. Careful
  examination of the figures shows that the MPM is slightly better than the HPM, but the improvement is not dramatic.

  The interesting feature revealed by the figures is that, essentially always, when the MPM and HPM fail, they do so by selecting
  a model of smaller dimension than the optimal model. There are a handful of dots going the other way, but they are hard to find.
  (This same feature was present in the corresponding figures for the one-variable and null model correlation scenarios, so
  those figures are omitted.)
  We highlight this feature because it potentially generalizes; if the MPM and HPM fail, they may typically do so by choosing too-small models.

\section{Generalizations of the optimality of the median probability model}
\subsection{More general priors in the orthogonal design case}\label{sec:general}
In orthogonal designs, the primary condition for optimality of the median probability model   is that $\wh{\b}_{\bg}$ is obtained by taking the relevant coordinates of the overall posterior mean
$\bar\b$ (condition (17) of \cite{barbieri}). With  ${\bm X}' {\bm X} = \bm D=\mathrm{diag}\{d_i\}_{i=1}^q$, the likelihood factors into independent likelihoods for each $\beta_i$ and thereby {\sl any} independent product prior
\begin{equation}\label{indep_prod}
\pi(\b\C\bg) = \prod_{i=1}^{q} \pi_i(\beta_i) \,,
\end{equation}
 will  satisfy the condition (17). This is a very important extension  because priors that are fat-tailed are often recommended over sharp-tailed priors, such as the $g$-prior (for which the optimality results of the MPM were originally conceived).

\begin{example}(Point-mass Spike-and-Slab Priors)
As an example of \eqref{indep_prod}, consider
the point-mass mixture prior $\pi(\b\C\bg)=\prod_{i=1}^q[\gamma_i\widetilde{\pi}_i(\beta_i)+(1-\gamma_i)\delta_{0}(\beta_i)]$, where $\widetilde{\pi}_i(\beta_i)$ could be e.g. the unit-information Cauchy priors, as recommended by Jeffreys.
\end{example}

\begin{example} (Continuous Spike-and-Slab Priors)
The point-mass spike is not needed for the MPM to be optimal. Consider  another example of \eqref{indep_prod}, the Gaussian mixture prior
of \cite{GM93}:
$
\pi(\b\C\bg)=\mathcal{N}_q(\bm{0}_q,\bm{V}_{\bm{\gamma}}),
$ where $\bm{V}_{\bm{\gamma}}=\mathrm{diag}\{\gamma_iv_1+(1-\gamma_i)v_0\}_{i=1}^q$ with $v_1>>v_0$.
 While the MPM was originally studied for point-mass spike-and-slab mixtures,  it is optimal {\sl also} under the continuous mixture priors.
  Indeed, to give an alternative argument, note that the posterior mean under a given model $\bg$ satisfies
\begin{eqnarray*}\label{mean_gamma}
\wh{\bm{\beta}}_{\bm{\gamma}}&=&(\bm{X}'\bm{X}+\bm{V}_{\bm{\gamma}}^{-1})^{-1}\bm{X}'\bm{Y}=\\&=&\mathrm{diag}\left\{\frac{1}{d_i+v_0^{-1}}\right\}\bm{X}'\bm{Y}+\mathrm{diag}\left\{\left(\frac{1}{d_i+v_1^{-1}}-\frac{1}{d_i+v_0^{-1}}\right)\gamma_i\right\}\bm{X}'\bm{Y},
\end{eqnarray*}
where $\bm{V}_{\bm{\gamma}}^{-1}=\mathrm{diag}
\left\{\frac{\gamma_i}{v_1}+\frac{(1-\gamma_i)}{v_0}\right\}_{i=1}^q$.  Then the posterior mean vector appears to be
$$
\bar{\bm{\beta}}=\mathrm{diag}\left\{\frac{1}{d_i+v_0^{-1}}\right\}\bm{X}'\bm{Y}+\mathrm{diag}\left\{\left(\frac{1}{d_i+v_1^{-1}}-\frac{1}{d_i+v_0^{-1}}\right)\pi_i\right\}\bm{X}'\bm{Y}.
$$
The criterion $R(\bm{\gamma})$ in \eqref{Rcrit} can be then written  as
$$
R(\bm{\gamma})=\sum_{i=1}^q\left(\frac{1}{d_i+v_1^{-1}}-\frac{1}{d_i+v_0^{-1}}\right)^2(\gamma_i-p_i)^2 d_i z_i^2.
$$
which easily seen to be minimized by the MPM model.
\end{example}


\subsection{More flexible priors in nested correlated designs}
\cite{barbieri} show that  the MPM is optimal also for correlated regressors, when considering a sequence of nested models.  Here, we generalize the class of priors under which such a statement holds.
Assume $q<n$ and denote with $\bm T$ the upper Cholesky triangular matrix such that $\bm X' \bm X  = {\bm T}' {\bm T}$. Then transform the linear model to
\begin{eqnarray*}
 \bm Y &=& {\bm X}^* {\bm \beta}^* + \bm \varepsilon \\
 &=& ({\bm X} {\bm T}^{-1}) ({\bm T} \bm \beta) + \bm \varepsilon,
\end{eqnarray*}
where $\bm\varepsilon\sim\mathcal{N}(0,\sigma^2\mathrm{I}_n)$.
Note first that, since $\bm T^{-1}$ is upper triangular, the nested sequence of models is unchanged; the parameterizations within each model have changed, but only by transforming the variables inside the model. We thus have the same nested model selection problem.

Next note that $({\bm X}^*)' {\bm X}^* = \mathrm{I}_n$, so the likelihood factors into independent likelihoods for the $\beta^*_i$; and this independence holds within each of the nested models, since the columns of ${\bm X}^*$ are orthonormal.
Thus, if the prior is chosen to be
$$\pi({\bm \beta}^*) = \prod_{i=1}^q \pi_i(\beta_i^*) \,,$$
then it follows from Section \ref{sec:general} that the median probability model is optimal.


\begin{example}{(Rescaled $g$-priors)}
Suppose the prior for $\bm \beta^*$ is $\mathcal{N}_p(\bm 0_p, \bm D)$, where $\bm D$ is diagonal. Any such prior results in optimality of the median probability model. If one transforms back to $\b$, the prior is $\mathcal{N}_p(\bm 0_p, {\bm T}^{-1} \bm D ({\bm T}^{-1})') $ which is considerably richer than the $g$-type priors considered in \cite{barbieri} (which would be these priors with $\bm D = g \bm I$).
As a specific illustration, suppose
$$\bm X = \left(
            \begin{array}{cc}
              1 & 1 \\
              1 & 1\\
              1+\epsilon & 1 \\
            \end{array}
          \right) \,,$$
with $\epsilon>0$ small. Computation then yields that the prior covariance matrix is
$${\bm T}^{-1} \bm D ({\bm T}^{-1})' = \frac{1}{2 \ \epsilon^2} \left(
            \begin{array}{cc}
            3d_1 & -(3+\epsilon) d_1 \\
             -(3+\epsilon) d_1 & [(3+2\epsilon + \frac{1}{3} \epsilon^2)d_1 + \frac{2}{3} \epsilon^2 d_2] \\
            \end{array}
          \right) \,.$$
The $g$-prior choice is $d_1=d_2=g$, resulting in
$${\bm T}^{-1} \bm D ({\bm T}^{-1})' = \frac{g}{2 \ \epsilon^2} \left(
            \begin{array}{cc}
            3 & -(3+\epsilon)  \\
             -(3+\epsilon)  & [(3+2\epsilon + \frac{1}{3} \epsilon^2) + \frac{2}{3} \epsilon^2 ] \\
            \end{array}
          \right) \,.$$
The alternative choice $d_1 = g  \epsilon^2$, $d_2=g$ yields
$${\bm T}^{-1} \bm D ({\bm T}^{-1})' = \frac{g}{2} \left(
            \begin{array}{cc}
            3 & -(3+\epsilon)  \\
             -(3+\epsilon)  & [(3+2\epsilon + \frac{1}{3} \epsilon^2) + \frac{2}{3}  d_2] \\
            \end{array} \right)
            \approx \frac{g}{2} \left(
            \begin{array}{cc}
            3 & -3  \\
             -3  &  \frac{11}{3} \\
            \end{array}
          \right) \,.$$
Thus the $g$-prior assigns a variance of $3g/[2 \epsilon^2]$ to $\beta_1$, while the new prior assigns a variance of $1.5 g$. This may
be much more reasonable in certain contexts.

\end{example}

\begin{figure}[!t]
     \subfigure[$r_{12}=0$]{
     \begin{minipage}[t]{0.31\textwidth}
       \hskip-1pc  \scalebox{0.25}{\includegraphics{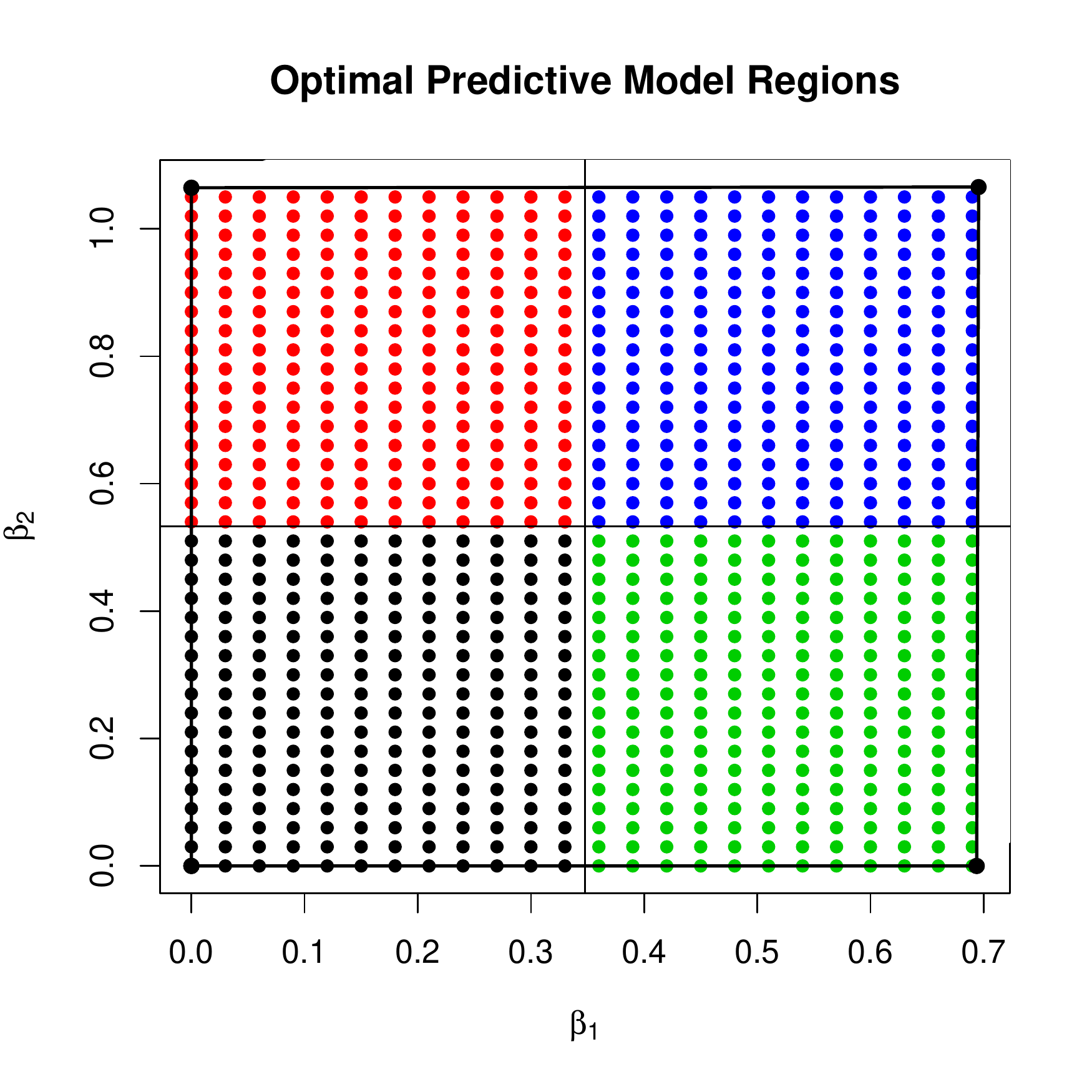}}  \label{pic:orthogonal}
    \end{minipage}}
     \subfigure[$r_{12}=0.5$]{
    \begin{minipage}[t]{0.31\textwidth}
    \hskip-1pc  \scalebox{0.25}{\includegraphics{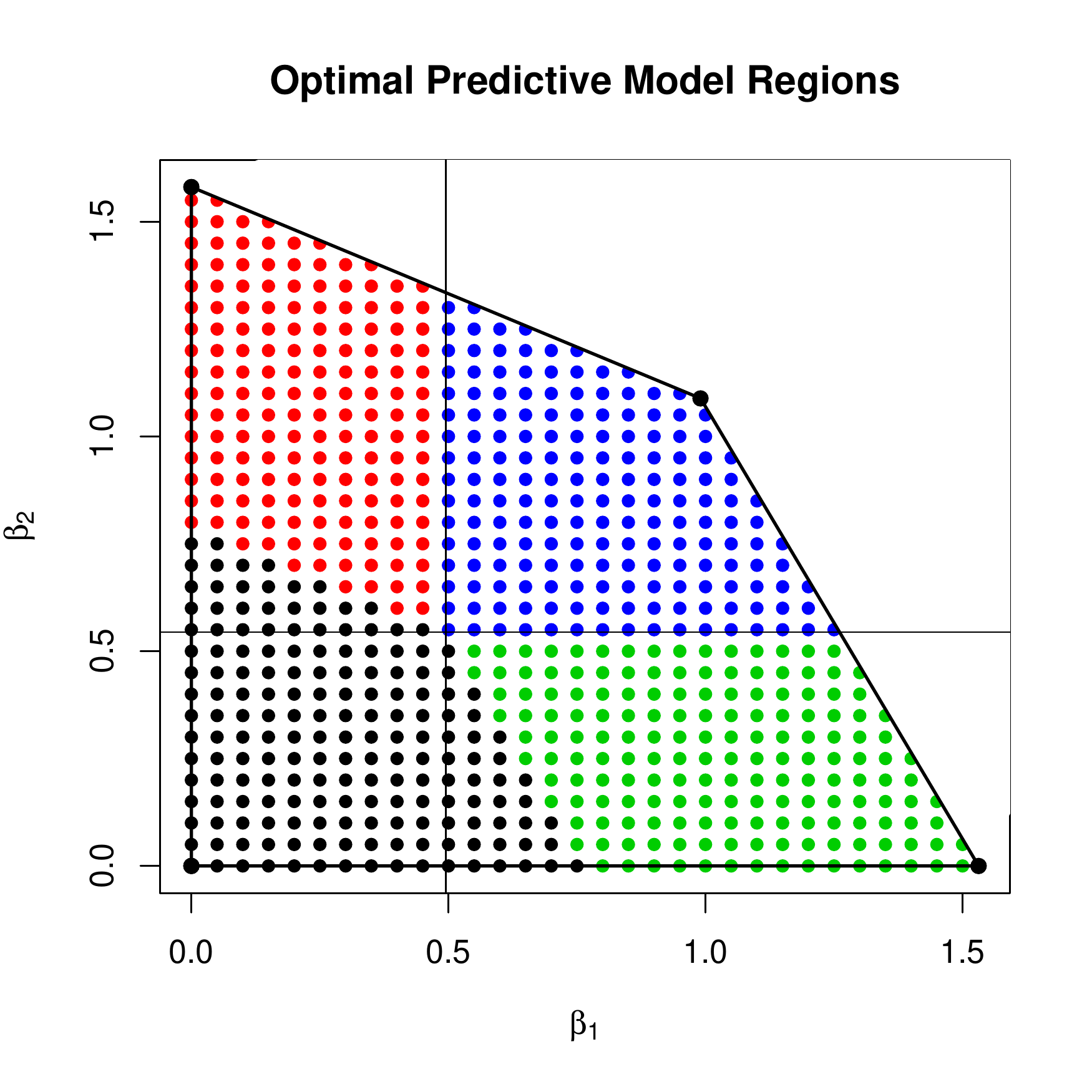}}     \label{fig1b}
    \end{minipage}}
     \subfigure[$r_{12}=0.9$]{
    \begin{minipage}[t]{0.31\textwidth}
     \hskip-1pc \scalebox{0.25}{\includegraphics{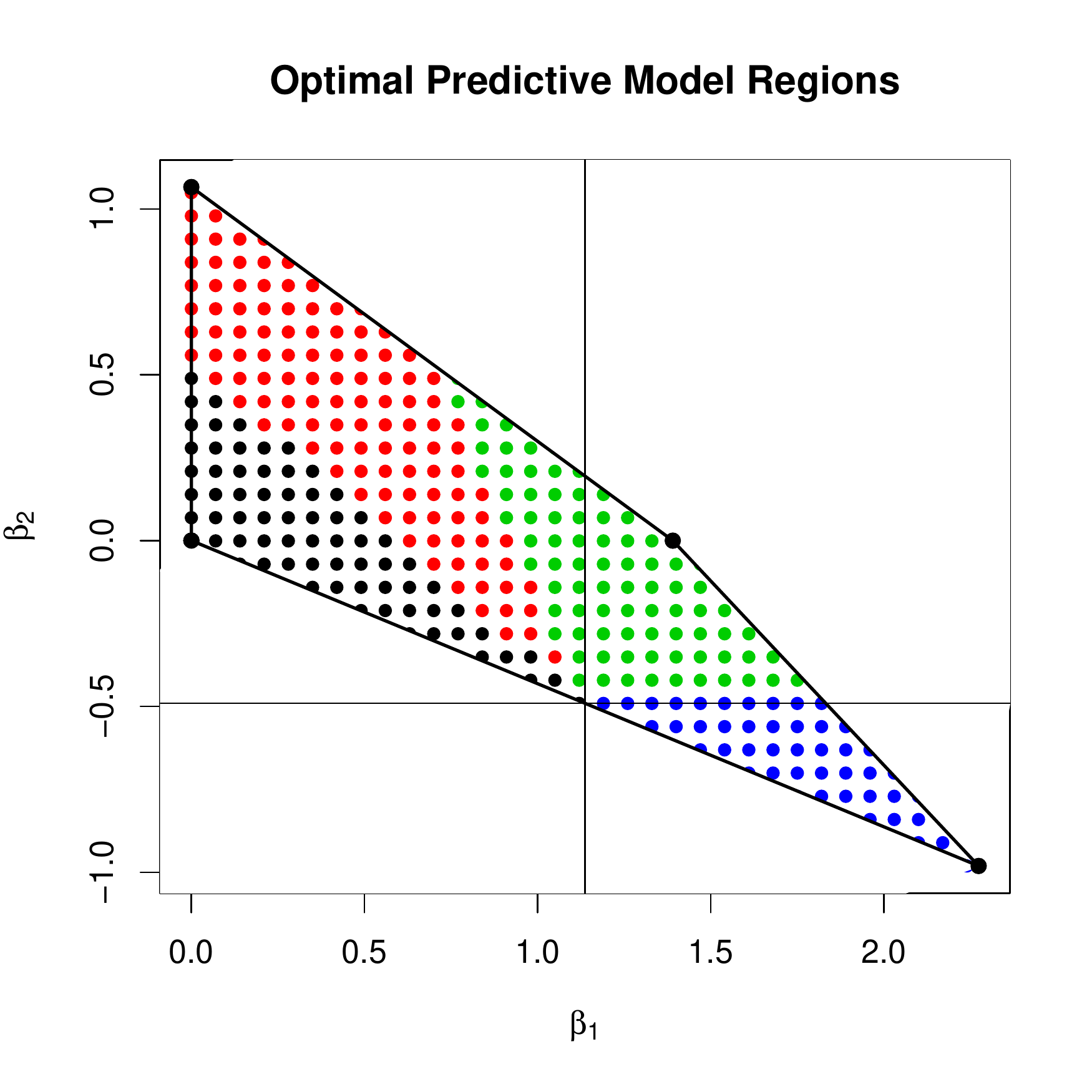}}   \label{fig1c}
    \end{minipage}}
    \caption{Plots of the optimality regions}    \label{fig1}
\end{figure}

\begin{figure}[!t]
     \subfigure[MPM]{
     \begin{minipage}[t]{0.31\textwidth}
       \hskip-1pc  \scalebox{0.25}{\includegraphics{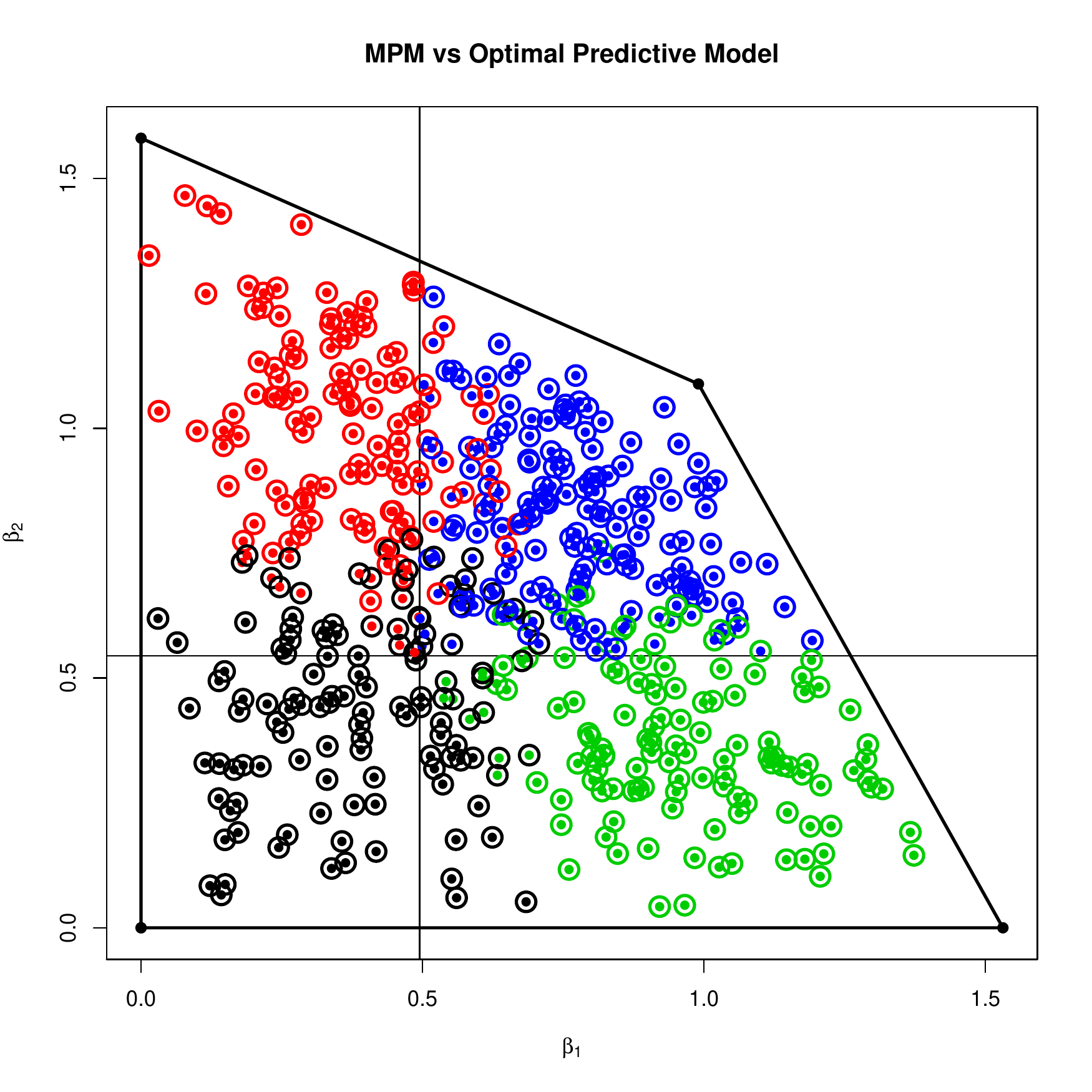}}  \label{fig2a}
    \end{minipage}}
     \subfigure[LASSO  ($\lambda=50$)]{
    \begin{minipage}[t]{0.31\textwidth}
    \hskip-1pc  \scalebox{0.25}{\includegraphics{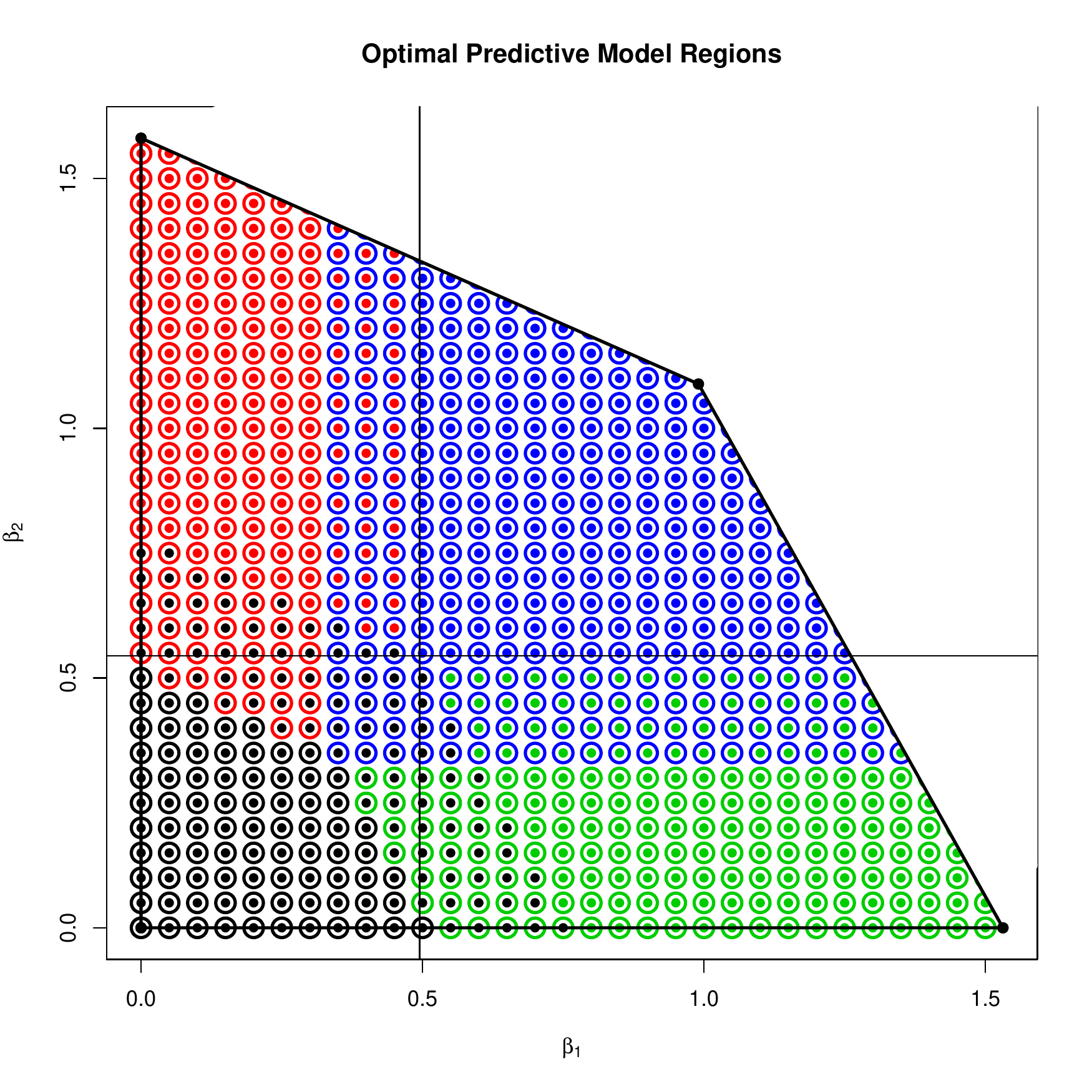}}     \label{fig2b}
    \end{minipage}}
     \subfigure[LASSO ($\lambda=80$)]{
    \begin{minipage}[t]{0.31\textwidth}
     \hskip-1pc \scalebox{0.25}{\includegraphics{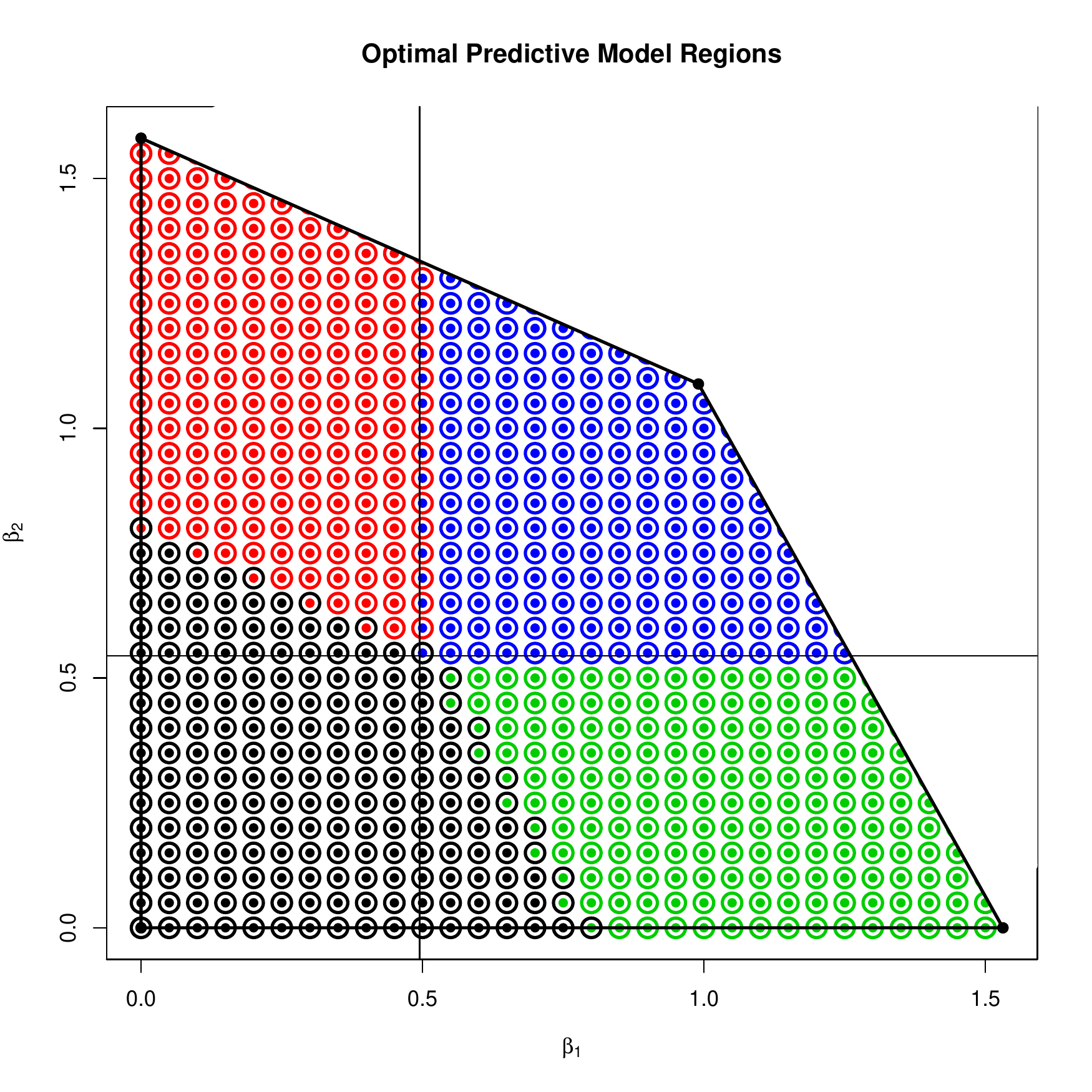}}   \label{fig2c}
    \end{minipage}}
    \caption{Plots of the actual (full dots) and estimated (round circles) optimal predictive regions }    \label{fig2}
\end{figure}

\subsection{The equi-correlated case: optimality can depend on higher order inclusion probabilities}\label{sec:equicor}
If we were to visualize the geometry of optimal predictive model selection in the orthogonal case, we would obtain a rectangular partition of a convex hull of posterior means under each model. Figure \ref{pic:orthogonal} depicts an example of such geometry when $p=2$.  The four black dots correspond to the four posterior means $\wh{\b}_{\bg}$  and comprise a skeleton of a convex hull of  all possible locations of the overall posterior mean $\bar\b$. Each of these hypothetical means is associated with one optimal predictive model, i.e. the model that is closest  to $\bar\b$ in terms of $R(\bg)$. Denote with $\wh{\b}^F=(\wh{\beta}_{1}^F,\wh{\beta}_{2}^F)$ the posterior mean under the full model $\bg=(1,1)'$. When $r_{12}=0$, the optimal predictive model (O) regions are rectangles (marked with 4 colors) where the cuts occur at $\wh{\beta}_{1}^F/2$ (vertical line) and $\wh{\beta}_{2}^F/2$ (horizontal line). The median probability model is known to be optimal in this case and it can be obtained by element-wise thresholding of $|\bar{\b}|$ at $|\wh{\b}^F|/2$.  When the predictors are correlated ($r_{12}=0.5$ in Figure \ref{fig1b}), the regions are no longer rectangular, where simple thresholding of $\bar\b$ is no longer enough to describe the optimal model. It is worthwhile to note, however, that the full model is the optimal model iff  $|\bar\b|\geq |\wh{\b}^F|/2$. Note, also, that
Figure \ref{fig1} (and the later Figure \ref{fig2}) correspond to the Case 2 situation in Section 3.

We focus on the example with $r_{12}=0.5$ a bit more closely in Figure \ref{fig2}. On the left, Figure \ref{fig2a}, we have a comparison with the MPM. The dots correspond to locations of the posterior mean $\bar\b$, where the posterior model probabilities were sampled from $Dir(1,1,1,1)$. The color of the solid dot designates the optimal predictive model. The color of the round circle surrounding each dot designates the median probability model. We can see an agreement between the MPM and O when the posterior model probabilities put a lot of weight onto one model (corners of the hull). When there is model selection uncertainty (the centre of the hull), the MPM does not have to be optimal. It is interesting to note that the regions of the MPM selection are overlapping suggesting that using only first posterior moments $\E[\bg\C\Y]$ may not be enough to characterize the optimal model. The following lemma provides a full characterization of the optimal model in terms of both the first and the second moments.

\begin{lemma}
Assume $\bm{Q}= \bm{X}'\bm{X} =(1-r)\mathrm{I}_p+r\bm{1}\bm{1}'$ for some $-1<r<1$.
Denote $\bm{\pi}=\E[\bg\C\Y]$ the vector of posterior inclusion probabilities and $\bm{\Pi}=\E\left[\frac{\bm{\gamma}\bm{\gamma}'}{1-r+r\|\bg\|}\C\Y\right]$.
Under \eqref{gprior}, the optimal predictive model minimizes
\begin{equation}\label{eq:equi_R}
R(\bm{\gamma})=(1-r)\sum_{i=1}^p\left(\sum_{j=1}^pb_{ij}({\bm{\gamma}})z_j\right)^2+r\sum_{i=1}^p\left[\left(\sum_{j=1}^pb_{ij}({\bm{\gamma}})\right)z_i\right]^2,
\end{equation}
where $\bm{Z}=\bm{X}'\bm{Y}=(z_1,\dots,z_p)'$ and $\bm B(\bg)=\left[\mathrm{diag}\{\bg-\bm\pi\}-r\left(\frac{\bg\bg'}{1-r+r\|\bg\|}-\bm\Pi \right) \right]$.
\end{lemma}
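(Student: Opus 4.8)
The plan is to compute $R(\bg)$ explicitly as a quadratic form in $\z=\X'\Y$, mirroring the proof of Lemma \ref{lemma:copies}, and then to exploit the fact that here the loss matrix $\Q$ and the Gram matrix $\X'\X$ are one and the same equicorrelation matrix $(1-r)\mathrm{I}_p+r\bm 1\bm 1'$. The first step is to evaluate the conditional posterior mean $\H_{\bg}\wh\b_{\bg}$ under the $g$-prior \eqref{gprior}. For a model with $\|\bg\|=|\bg|=k$ active variables the active Gram matrix is $\X_{\bg}'\X_{\bg}=(1-r)\mathrm{I}_k+r\bm 1_k\bm 1_k'$, which is nonsingular for $-1<r<1$, so the Moore--Penrose inverse coincides with the ordinary inverse and Sherman--Morrison gives
$$(\X_{\bg}'\X_{\bg})^{-1}=\frac{1}{1-r}\left[\mathrm{I}_k-\frac{r}{1-r+rk}\bm 1_k\bm 1_k'\right].$$
Since $\wh\b_{\bg}=\frac{g}{1+g}(\X_{\bg}'\X_{\bg})^{-1}\X_{\bg}'\Y$, lifting to the full coordinate space through the stretching matrix $\H_{\bg}$ (which merely zero-pads the inactive coordinates) I would obtain the compact representation
$$\H_{\bg}\wh\b_{\bg}=\frac{g}{(1+g)(1-r)}\left[\mathrm{diag}\{\bg\}-\frac{r}{1-r+r\|\bg\|}\bg\bg'\right]\z,$$
using $\bg'\z=\sum_j\gamma_j z_j$ and the fact that $\bg$ selects the active coordinates.

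The second step is to average over the posterior. Because the bracketed matrix is a measurable function of $\bg$, linearity yields
$$\bar\b=\sum_{\bg}\pi(\bg\C\Y)\,\H_{\bg}\wh\b_{\bg}=\frac{g}{(1+g)(1-r)}\left[\mathrm{diag}\{\bm\pi\}-r\bm\Pi\right]\z,$$
with $\bm\pi=\E[\bg\C\Y]$ and $\bm\Pi=\E\!\left[\bg\bg'/(1-r+r\|\bg\|)\C\Y\right]$, which are exactly the quantities named in the statement. Subtracting, the common positive scalar factors out and the centred mean collapses to
$$\H_{\bg}\wh\b_{\bg}-\bar\b=\frac{g}{(1+g)(1-r)}\,\B(\bg)\,\z,\qquad \B(\bg)=\mathrm{diag}\{\bg-\bm\pi\}-r\left(\frac{\bg\bg'}{1-r+r\|\bg\|}-\bm\Pi\right),$$
recovering precisely the matrix $\B(\bg)$ of the lemma (the null model $\bg=\bm 0$ is handled automatically, since then the $\bg\bg'$ term vanishes and the formula returns $-\bar\b$).

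The final step is to insert this into $R(\bg)=(\H_{\bg}\wh\b_{\bg}-\bar\b)'\Q(\H_{\bg}\wh\b_{\bg}-\bar\b)$ and expand the quadratic form with $\Q=(1-r)\mathrm{I}_p+r\bm 1\bm 1'$. Writing $u=\B(\bg)\z$, so that $u_i=\sum_j b_{ij}(\bg)z_j$, one has the identity $u'\Q u=(1-r)\sum_i u_i^2+r(\bm 1'u)^2$; the first term is immediately $(1-r)\sum_i(\sum_j b_{ij}(\bg)z_j)^2$, and since $\B(\bg)$ is symmetric one has $\bm 1'u=\bm 1'\B(\bg)\z=\sum_i\big(\sum_j b_{ij}(\bg)\big)z_i$, which squared gives the second term. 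Dropping the overall positive constant $[g/((1+g)(1-r))]^2$, which does not affect the $\arg\min$, then yields \eqref{eq:equi_R}.

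The main obstacle is the first step: correctly composing the Sherman--Morrison inverse of the active submatrix with the stretching operator $\H_{\bg}$ into a single clean $p\times p$ matrix acting on $\z$, while carefully tracking the scalar $1/(1-r)$ and the denominator $1-r+r\|\bg\|$; once $\B(\bg)$ is identified, everything downstream is bookkeeping. I would also flag that, because $\B(\bg)$ is symmetric, the second term emerges as $r(\bm 1'\B(\bg)\z)^2$, i.e. the square of the single sum $\sum_i\big(\sum_j b_{ij}(\bg)\big)z_i$, so the squaring bracket in \eqref{eq:equi_R} should enclose the whole sum over $i$ rather than sit inside it.
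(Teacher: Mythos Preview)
Your proposal is correct and follows essentially the same approach as the paper: invert the equicorrelated Gram submatrix via the Sherman--Morrison/rank-one update formula, lift through $\H_{\bg}$ to get $\H_{\bg}\wh\b_{\bg}$ as a matrix times $\z$, average to obtain $\bar\b$, and substitute the difference into the quadratic form with $\Q=(1-r)\mathrm I_p+r\bm 1\bm 1'$. The paper's proof is terser (it ends with ``the rest follows from matrix algebra''), so your explicit expansion of $u'\Q u$ is a welcome addition; your remark that the second term emerges as the single square $r\big(\sum_i(\sum_j b_{ij}(\bg))z_i\big)^2$ rather than a sum of squares is also correct and worth noting.
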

\begin{proof}
We begin by noting that
$$
\H_{\bg}\wh{\b}_{\bg}=\frac{g}{1+g}\frac{1}{1-r}\left(\mathrm{diag}\{\bg\}-\frac{r\,\bg\bg'}{1-r+r\|\bg\|}\right)\bm Z,
$$
where we used the fact
$$
(\bm{G}+\bm{H})^{-1}=\bm{G}^{-1}-\frac{1}{1+a}\bm{G}^{-1}\bm{H}\bm{G}^{-1},
$$
for $\bm G=(1-r)\mathrm{I}$ and $\bm H=r \bm{1}\bm{1}'$, where $a=\mathrm{trace}(\bm{HG}^{-1})$.  Denote with $\bm\pi=\E[\bg\C\Y].$ Then $\bm H_{\bg}\wh{\b}_{\bg}-\bar{\b}=$
\begin{align*}
&\frac{g}{1+g}\frac{1}{1-r}\left\{\mathrm{diag}\{\bg-\bm\pi\}-r\left[\frac{\bg\bg'}{1-r+r\|\bg\|}-\E\left(\frac{\bg\bg'}{1-r+r\|\bg\|}\C\Y\right) \right] \right\}\bm Z.
\end{align*}
The rest follows from matrix algebra.
\end{proof}

When $r=0$, we obtain the usual criterion $R(\bg)$ minimized by the MPM model. The larger the correlation $r$, the more weight is put on the joint inclusion probabilities, where the optimal model is the one whose matrix $\bg\bg'$ is closest to the posterior mean of $\bg\bg'$ (normalized by the model size).

It is also useful to point out that the MPM no longer corresponds to simple thresholding of $\bar\b$  when $r\neq0$.  Because the predictors are correlated, it would seem natural to threshold some functional of  $\bar\b$ which takes into account the correlation. An example of one possible approach is given in \cite{hahn_carvalho}, who suggest running a lasso regression of $\X\bar\b$ onto $\X$. Such a LASSO post-processing step yields a model which summarizes $\bar\b$ while taking into account the correlation pattern between $\x$'s. The regions of such LASSO selected model are depicted in Figure \ref{fig2b} and \ref{fig2c} (again the solid dots are O and the round circle around are the LASSO selected models). We can see that, indeed, the LASSO selection takes into account the correlation and, interestingly, it can almost exactly match the  optimal predictive regions  for a suitably chosen hyper-parameter $\lambda$. This connection between the LASSO post-processing of $\bar\b$ and optimal predictive model is  curious. However, when the predictors are highly correlated ($r=0.9$ in Figure \ref{fig3}), the LASSO regions do not yield the O regions, not even remotely (Figure \ref{fig3}).

\begin{figure}[!t]
     \subfigure[MPM]{
     \begin{minipage}[t]{0.31\textwidth}
       \hskip-1pc  \scalebox{0.25}{\includegraphics{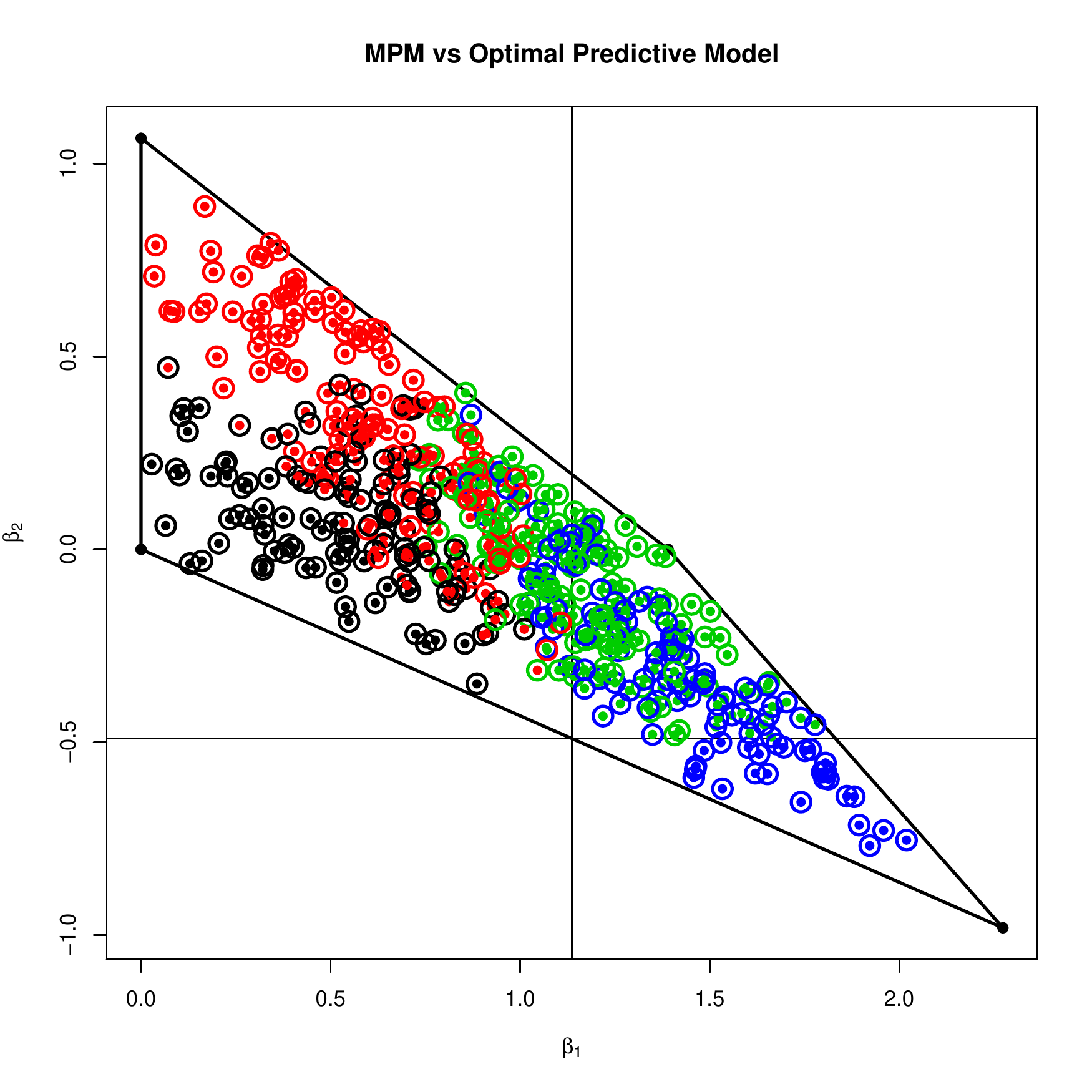}}  \label{fig3a}
    \end{minipage}}
     \subfigure[LASSO  ($\lambda=50$)]{
    \begin{minipage}[t]{0.31\textwidth}
    \hskip-1pc  \scalebox{0.25}{\includegraphics{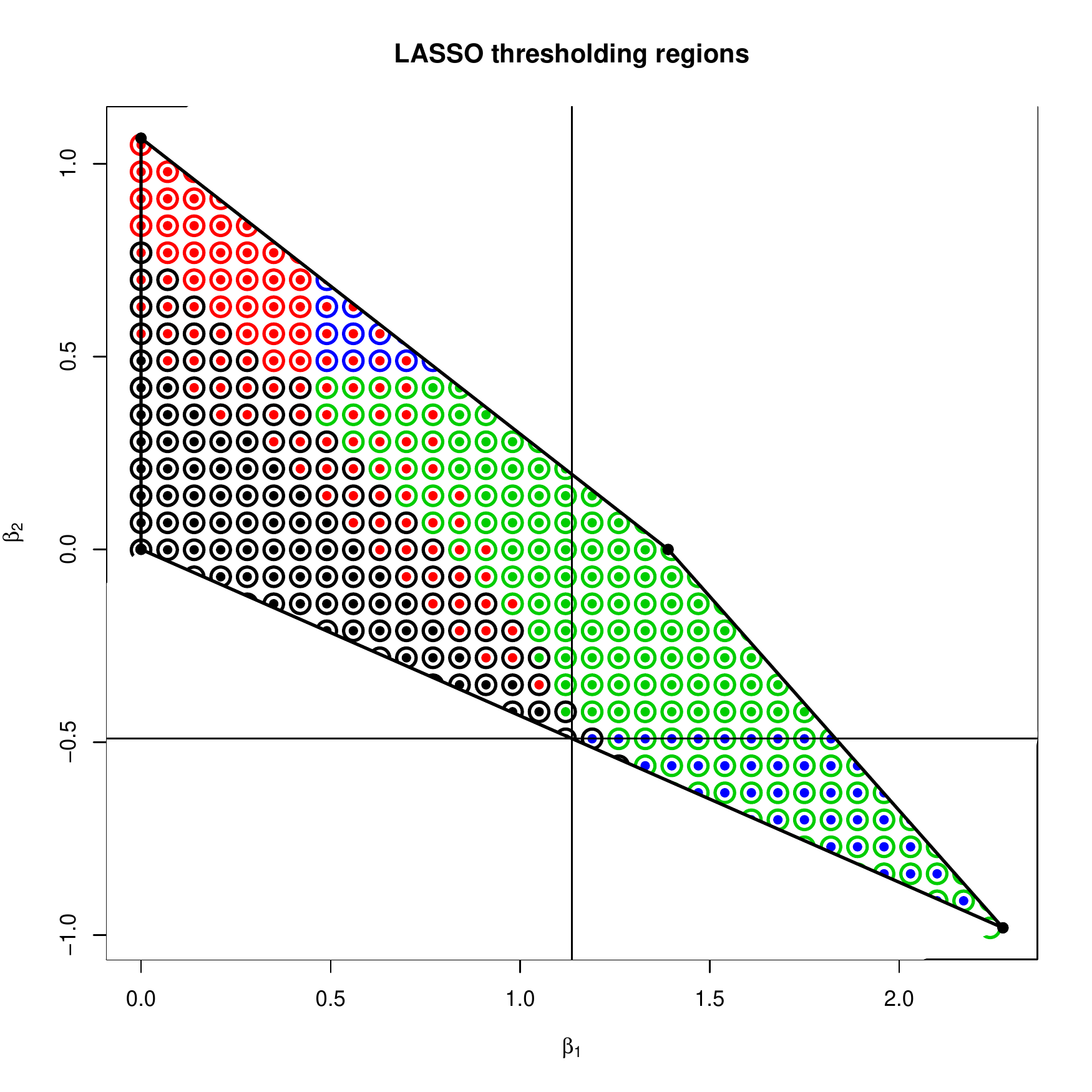}}     \label{fig3b}
    \end{minipage}}
     \subfigure[LASSO  ($\lambda=80$)]{
    \begin{minipage}[t]{0.31\textwidth}
     \hskip-1pc \scalebox{0.25}{\includegraphics{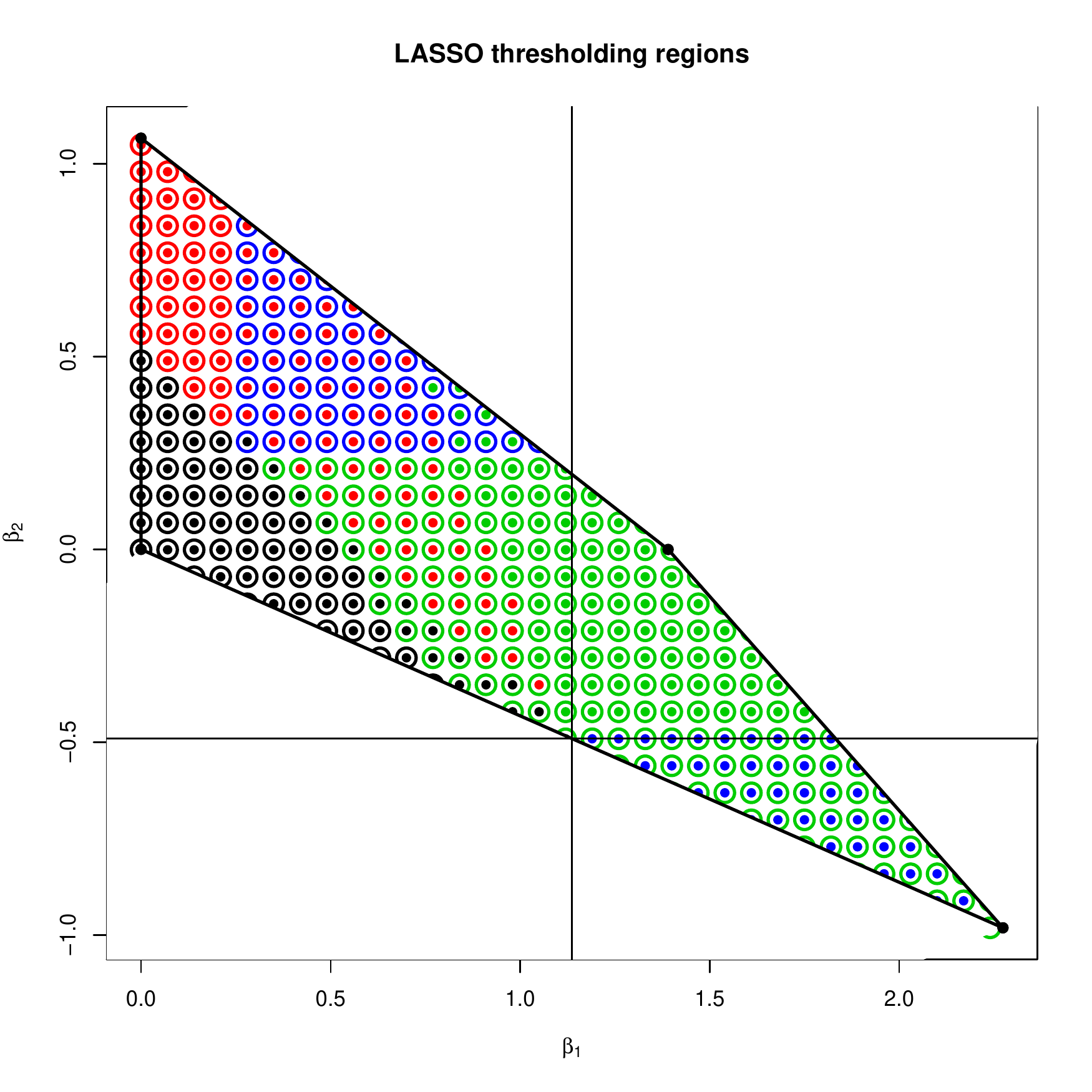}}   \label{fig3c}
    \end{minipage}}
    \caption{Plots of the actual (full dots) and estimated (round circles) optimal predictive regions }    \label{fig3}
\end{figure}

\section{Discussion}

The paper consists of two quite different parts. One part (mostly Section 4) focuses on generalizing previous theorems concerning the
optimality of the median probability model. In addition to the generalizations therein a number of other generalizations are suggested
in the paper, when groups of variables are orthogonal to others. Here are three such results, whose proofs are essentially obvious.
\begin{description}
\item[Result 1.]
If one group of variables is orthogonal to another, then finding the MPM and the optimal procedure can be done separately
for each group of variables.
\item[Result 2.]
If a variable is orthogonal to all others, it can separated from the problem and handled on its own, and will belong in the optimal model if its inclusion probability is bigger than 1/2.
\item[Result 3.]
If two groups of orthogonal variables each have a nested structure, then the median probability model is optimal and can be found separately in each group.
\end{description}

In spite of the considerable generalizations of optimality afforded by Section 4 and these related results, the extent to which the
median probability model is guaranteed to be optimal is still rather limited. Hence the second goal of the paper was to study
the extent to which the MPM failed to be optimal. This was done in two ways: first, by looking at ``worst cases," where the
number of highly correlated variables grows and second, by doing an extensive numerical study to see how often the MPM (and HPM) fail
to be optimal. The conclusions from the numerical study are given in Section 3, and won't be repeated here, except to say that
the performance of the MPM was overall excellent, even in highly correlated situations, and was measurably better than the performance of the HPM.

The MPM can fail, however, and fail badly, so
we finish with a discussion of when this happens, focusing (for simplicity) on the case where there are many
replicates of the covariate vector $\bm x$ in the model; then the median probability model will not include
that covariate. Consider four cases.

\smallskip
\noindent
{\em Case 1. $\bm x$ is not useful for prediction:} Now the median probability model might well do better than the model averaged answer for the original problem, since the median probability model will ignore $\bm x$, while the model averaged answer insists on including it.

\smallskip
\noindent
{\em Case 2. $\bm x$ is crucial for good prediction:} Now the median probability model does very poorly. Unfortunately, the error here, in not including $\bm x$, will typically be much larger than the gain in Case 1.

\smallskip
\noindent
{\em Case 3. $\bm x$ is helpful but not crucial for good prediction:} This is like the situation in Section 1.1. The harm in the median probability model ignoring $\bm x$ may be rather small.

\smallskip
\noindent
{\em Case 4. Nested Models:} If the above arises in a nested model scenario, the median probability model is, of course, the optimal single model. It can still err, however, through the prior probabilities being inappropriate, assigning too much mass to all the duplicate models. (But this is just saying that the model averaged answer then can also err.)

\section*{Appendix 1: Proof of Mini-Theorems}
We denote with $\mbox{\boldmath $\alpha$}_{\mbox{\footnotesize\boldmath $\gamma$}}$  the projection of $\mbox{\boldmath $y$}$ on the space
spanned by the columns of ${\bf X}_{\mbox{\footnotesize\boldmath
$\gamma$}}$. Assume that all variables have been standardized, so that
$$\mbox{\boldmath $\alpha$}_{00}=\left(\begin{array}{c} 0 \\ 0 \end{array} \right), \quad
\mbox{\boldmath $\alpha$}_{10}=\left(\begin{array}{c} a \\ 0 \end{array} \right), \quad
\mbox{\boldmath $\alpha$}_{01}=\left(\begin{array}{c} b \\ c \end{array} \right),  \quad
\mbox{\boldmath $\alpha$}_{11}=\left(\begin{array}{c} a \\ d \end{array} \right),$$
with
$$a=r_{1y}, \quad \quad b=r_{12}\, r_{2y}, \quad \quad c= (1-r_{12}^2)^{1/2}\, r_{2y}, \quad \quad d= \frac{r_{2y}-r_{12}\, r_{1y}}{(1-r_{12}^2)^{1/2}},$$
where $r_{12}=Corr(x_1, x_2)$, $r_{1y}=Corr(x_1, y)$ and $r_{2y}=Corr(x_2, y)$.
Actually the original expression of each coordinate has an irrelevant common factor equal to $\sqrt{n}$, which has been ignored.
The model average point $\bar{\mbox{\boldmath $\alpha$}}$ has coordinates $\bar\alpha_1$ and $\bar\alpha_2$ given by
\begin{displaymath}
 \left(
\begin{array}{c}\bar{\alpha}_1 \\ \bar{\alpha}_2 \end{array} \right)=p_{10} \left(
\begin{array}{c}a \\0 \end{array} \right)+p_{01} \left(
\begin{array}{c}b \\c \end{array} \right)+p_{11} \left(
\begin{array}{c}a \\d \end{array} \right)
\end{displaymath}
where $p_{\mbox{\footnotesize\boldmath $\gamma$}}$ is the posterior probability of model $M_{\mbox{\footnotesize\boldmath $\gamma$}}$.

Suppose that we would like to check if the model average point $\bar{\mbox{\boldmath $\alpha$}}$ lies
inside a particular triangular subregion of the space $\left\{\mbox{\boldmath $\alpha$}_{00}, \mbox{\boldmath $\alpha$}_{10}, \mbox{\boldmath $\alpha$}_{01}, \mbox{\boldmath $\alpha$}_{11}\right\}$.
To this aim, we express the coordinates of $\bar{\mbox{\boldmath $\alpha$}}$ as a linear combination of the coordinates of the vertexes of the triangular subregion. The model average point is inside the triangular subregion if the weights of the vertexes result to be all positive.

In particular, when we refer to the triangular subregion
$S_1=\left\{\mbox{\boldmath $\alpha$}_{00}, \mbox{\boldmath $\alpha$}_{10}, \mbox{\boldmath $\alpha$}_{11} \right\}$, we write the model average point as
\begin{displaymath}
\left(
\begin{array}{c}\bar{\alpha}_1 \\ \bar{\alpha}_2 \end{array} \right)=w^{(1)}_{10} \left(
\begin{array}{c}a \\0 \end{array} \right)+w^{(1)}_{11} \left(
\begin{array}{c}a \\d \end{array}
\right),
\end{displaymath}
with
$w^{(1)}_{00}+w^{(1)}_{10}+w^{(1)}_{11}=1$, and we may find that:
\begin{eqnarray*}w^{(1)}_{00}&=&1-\frac{\bar{\alpha}_1}{a}\\
w^{(1)}_{10}&=&\frac{\bar{\alpha}_1}{a}-\frac{\bar{\alpha}_2}{d}\\
w^{(1)}_{11}&=&\frac{\bar{\alpha}_2}{d}.\end{eqnarray*}

Note that the sign of each weight gives us information on the position of $\bar{\mbox{\boldmath $\alpha$}}$ with respect to the segment joining the other two vertexes. In fact if one of the weight is positive, say $w^{(1)}_{10}$, this means that $\bar{\mbox{\boldmath $\alpha$}}$ lies on the side of $\mbox{\boldmath $\alpha$}_{10}$ with respect to the line through $\mbox{\boldmath $\alpha$}_{00}$ and $\mbox{\boldmath $\alpha$}_{11}$. If $w^{(1)}_{10}<0$ then $\bar{\mbox{\boldmath $\alpha$}}$ lies on the other side, while if $w^{(1)}_{10}=0$ it lies on the segment.

In the same way, when we consider the triangular subregion
$S_2=\left\{\mbox{\boldmath $\alpha$}_{00}, \mbox{\boldmath $\alpha$}_{01}, \mbox{\boldmath $\alpha$}_{11} \right\}$,
we write the model average point as
\begin{displaymath}
\left(
\begin{array}{c}\bar{\alpha}_1 \\ \bar{\alpha}_2 \end{array} \right)=w^{(2)}_{01} \left(
\begin{array}{c}b \\c \end{array} \right)+w^{(2)}_{11} \left(
\begin{array}{c}a \\d \end{array} \right)
 \end{displaymath}
 with $w^{(2)}_{00}+w^{(2)}_{01}+w^{(2)}_{11}=1$ and
\begin{eqnarray*}
w^{(2)}_{00}&=&1+\frac{(d-c)\, \bar{\alpha}_1+(b-a)\, \bar{\alpha}_2}{ac-bd}\\
w^{(2)}_{01}&=&\frac{a\, \bar{\alpha}_2-d\, \bar{\alpha}_1}{ac-bd}\\
w^{(2)}_{11}&=&\frac{c\, \bar{\alpha}_1-b\, \bar{\alpha}_2}{ac-bd}.
\end{eqnarray*}

In case 1 and 2 the triangular subregions
$S_1$ and $S_2$ are disjoint and their union covers the entire space $\left\{\mbox{\boldmath $\alpha$}_{00}, \mbox{\boldmath $\alpha$}_{10}, \mbox{\boldmath $\alpha$}_{01}, \mbox{\boldmath $\alpha$}_{11}\right\}$ (see Figure \ref{fig:subreg-12}).

\begin{figure}[!t]
     \subfigure[Case 1]{
     \begin{minipage}[t]{0.51\textwidth}
       \hskip-1pc  \scalebox{0.4}{\includegraphics{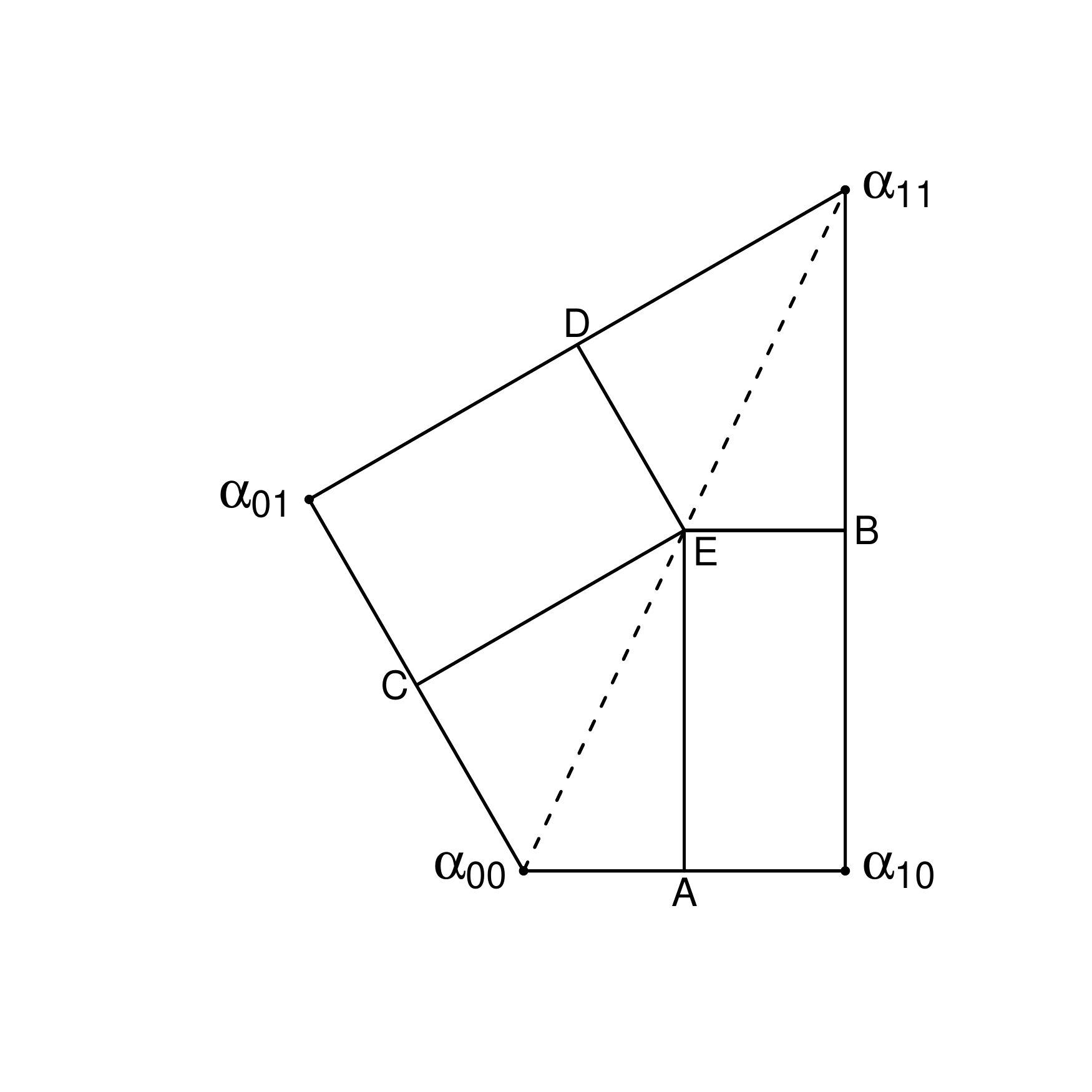}}  \label{fig:srcase1}
    \end{minipage}}
     \subfigure[Case 2]{
    \begin{minipage}[t]{0.51\textwidth}
    \hskip-1pc  \scalebox{0.4}{\includegraphics{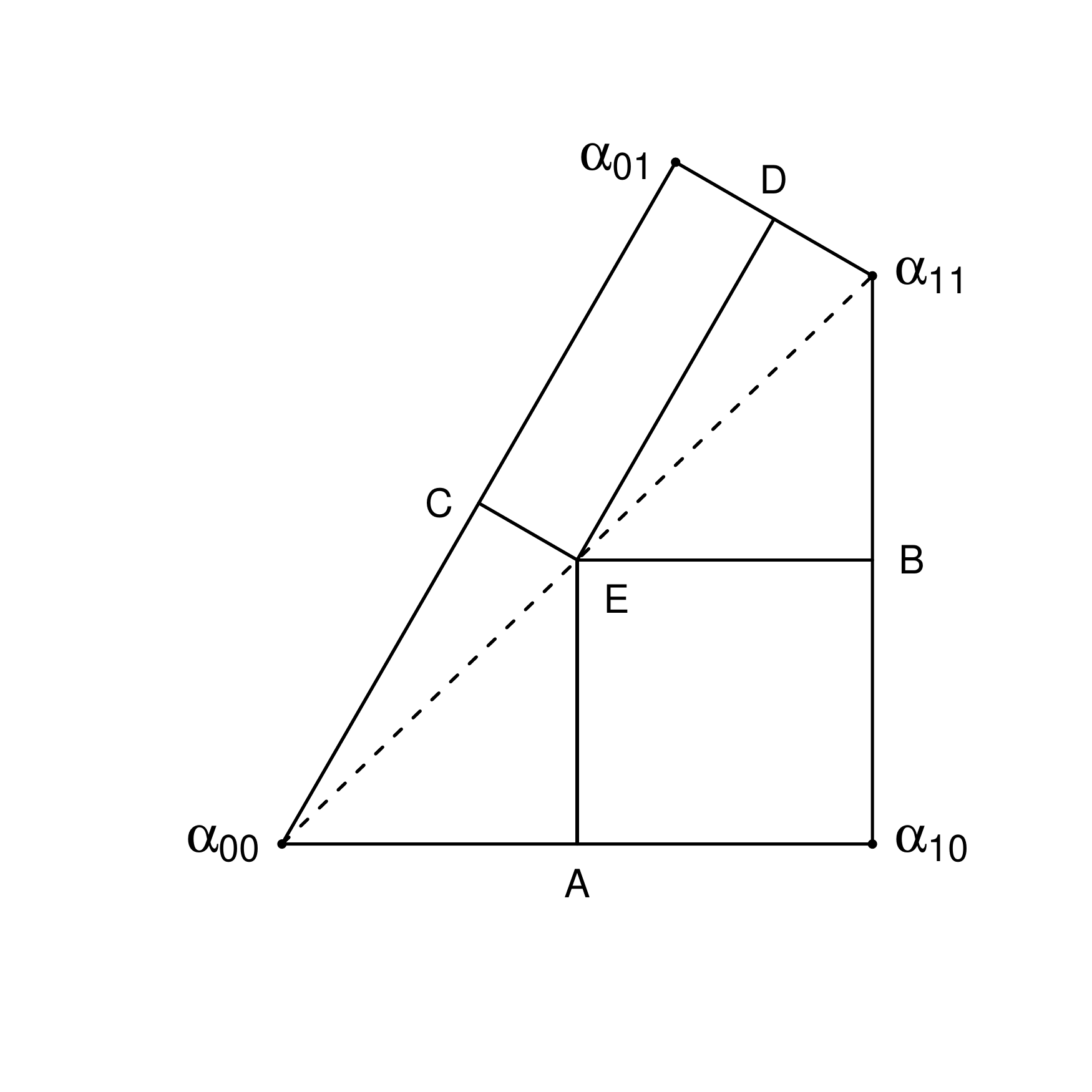}}     \label{fig:srcase2}
    \end{minipage}}
    \caption{Subregions}    \label{fig:subreg-12}
\end{figure}

Note also that to locate the position of the point inside $S_1$ or $S_2$ we just need to check the values of the weights $w^{(1)}$ or $w^{(2)}$. In fact in the nested models case the optimal model is the median. Thus, taking into account $S_1$, we know that if $w^{(1)}_{00}>1/2$ then $\bar{\mbox{\boldmath $\alpha$}}$ lies
inside $\left\{ \bar{\mbox{\boldmath $\alpha$}}_{00}, A, E \right\}$, if $w^{(1)}_{11}>1/2$ inside $\left\{ \bar{\mbox{\boldmath $\alpha$}}_{11}, B, E \right\}$, otherwise inside $\left\{ \bar{\mbox{\boldmath $\alpha$}}_{10}, A, E, B \right\}$.

In case 3 the triangular subregions
$S_1$ and $S_2$ overlap and their union does not cover the entire space $\left\{\mbox{\boldmath $\alpha$}_{00}, \mbox{\boldmath $\alpha$}_{10}, \mbox{\boldmath $\alpha$}_{01}, \mbox{\boldmath $\alpha$}_{11}\right\}$ (see Figure \ref{fig:srcase3-s1} and \ref{fig:srcase3-s2}). However in this case we may refer to $S_3=\left\{\mbox{\boldmath $\alpha$}_{10}, \mbox{\boldmath $\alpha$}_{01}, E \right\}$, $S_4=\left\{\mbox{\boldmath $\alpha$}_{00}, \mbox{\boldmath $\alpha$}_{10}, E \right\}$ and $S_5=\left\{\mbox{\boldmath $\alpha$}_{01}, \mbox{\boldmath $\alpha$}_{11}, E \right\}$, where $E=\left( \begin{array}{c}a/2 \\ d/2 \end{array}\right)$ is the midpoint of the edge linking $\mbox{\boldmath $\alpha$}_{00}$ and $\mbox{\boldmath $\alpha$}_{11}$ (see Figure \ref{fig:srcase3}). To locate the position of the point inside $S_3$, $S_4$ or $S_5$ we just need to check the value of which of the weights of the two vertexes different from $E$ is the largest.

\begin{figure}[!t]
     \subfigure[]{
     \begin{minipage}[t]{0.31\textwidth}
       \hskip-1pc  \scalebox{0.3}{\includegraphics{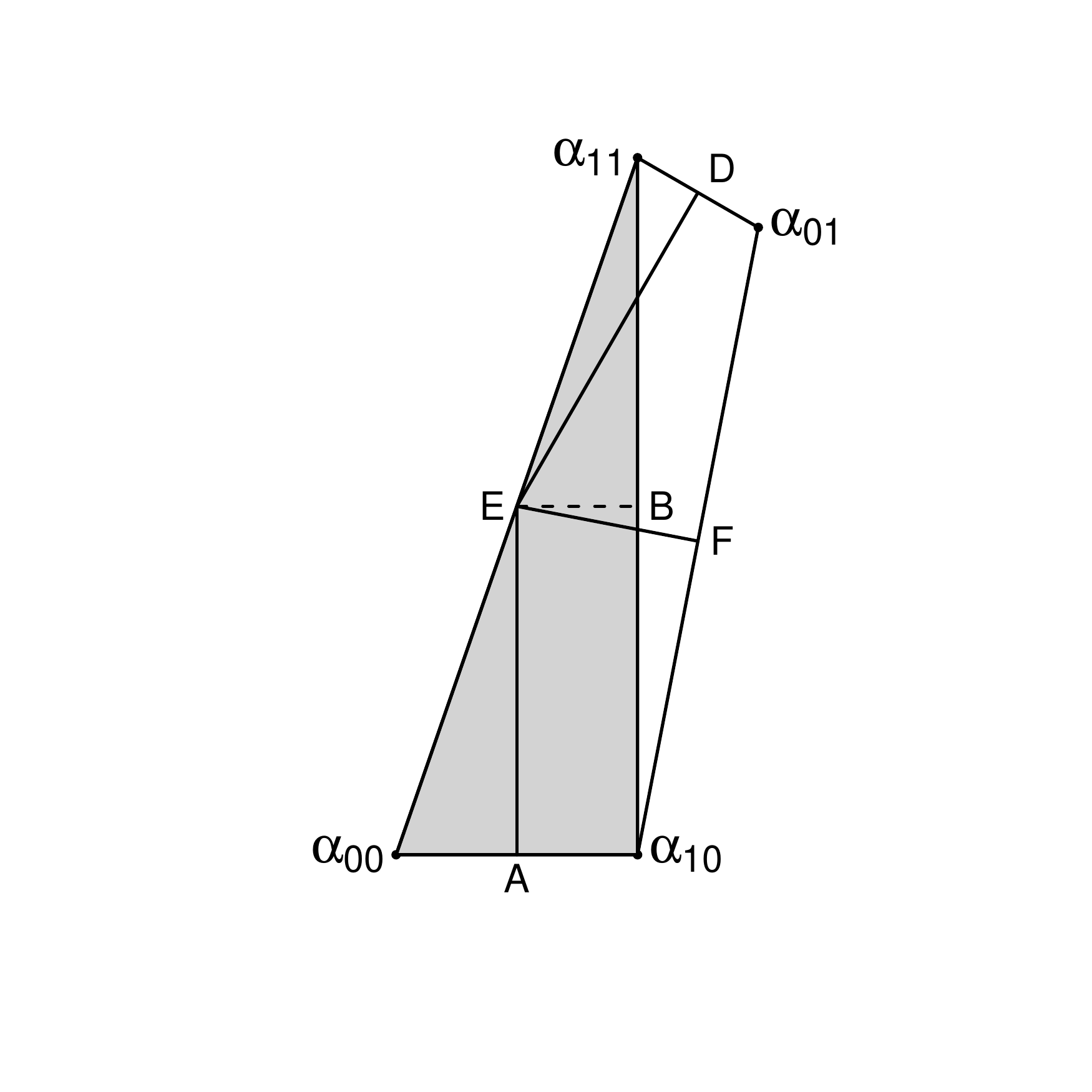}}  \label{fig:srcase3-s1}
    \end{minipage}}
     \subfigure[]{
    \begin{minipage}[t]{0.31\textwidth}
    \hskip-1pc  \scalebox{0.3}{\includegraphics{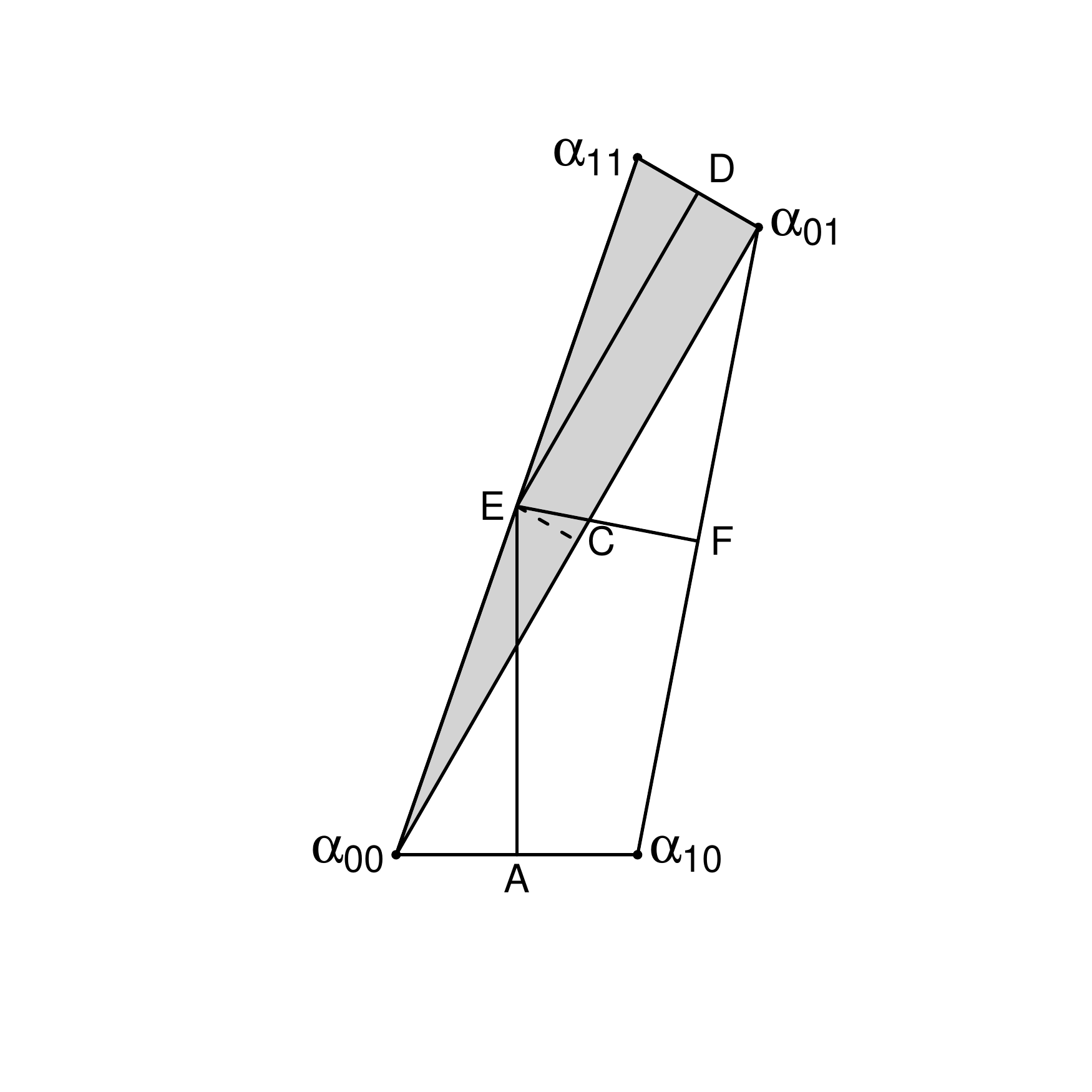}}     \label{fig:srcase3-s2}
    \end{minipage}}
     \subfigure[]{
    \begin{minipage}[t]{0.31\textwidth}
     \hskip-1pc \scalebox{0.3}{\includegraphics{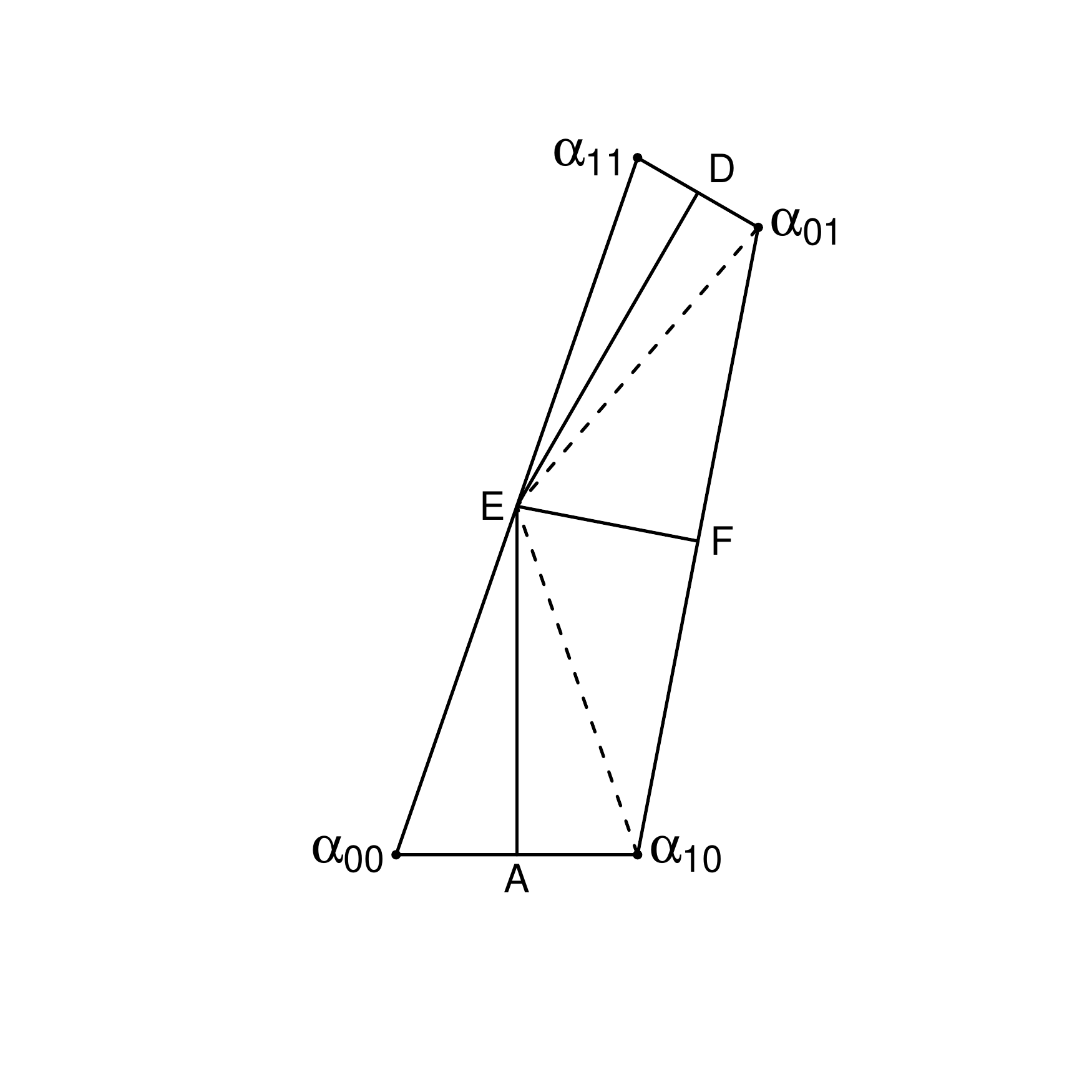}}   \label{fig:srcase3}
    \end{minipage}}
    \caption{Subregions: Case 3}    \label{fig:subreg-3}
\end{figure}

In the rest of the section, the weights for these new subregions are reported. In particular, when we refer to the triangular subregion
$S_3=\left\{\mbox{\boldmath $\alpha$}_{10}, \mbox{\boldmath $\alpha$}_{01}, E \right\}$, from
\begin{displaymath}
\left(
\begin{array}{c}\bar{\alpha}_1 \\ \bar{\alpha}_2 \end{array} \right)=w^{(3)}_{10} \left(
\begin{array}{c}a \\0 \end{array} \right) +w^{(3)}_{01} \left(
\begin{array}{c}b \\c \end{array} \right)+w^{(3)}_{E}\left(
\begin{array}{c}a/2 \\d/2 \end{array}
\right)
\end{displaymath}
and $w^{(3)}_{E}+w^{(3)}_{10}+w^{(3)}_{01}=1$, we obtain
\begin{eqnarray*}
w^{(3)}_{10}&=&\frac{(2c-d)\, \bar{\alpha}_1-(2b-a)\, \bar{\alpha}_2-ac+bd}{ac+bd-ad}\\
w^{(3)}_{01}&=&\frac{d\, \bar{\alpha}_1+a\, \bar{\alpha}_2-ad}{ac+bd-ad}\\
w^{(3)}_{E}&=&2 \frac{ac-c\, \bar{\alpha}_1-(a-b)\, \bar{\alpha}_2}{ac+bd-ad}.
\end{eqnarray*}

When we refer to the triangular subregion
$S_4=\left\{\mbox{\boldmath $\alpha$}_{00}, \mbox{\boldmath $\alpha$}_{10}, E \right\}$, from
\begin{displaymath}
\left(
\begin{array}{c}\bar{\alpha}_1 \\ \bar{\alpha}_2 \end{array} \right)=w^{(4)}_{10} \left(
\begin{array}{c}a \\0 \end{array} \right) +w^{(4)}_{E}\left(
\begin{array}{c}a/2 \\d/2 \end{array}
\right)
\end{displaymath}
and $w^{(4)}_{E}+w^{(4)}_{00}+w^{(3)}_{10}=1$, we obtain
\begin{eqnarray*}
w^{(4)}_{00}&=&1-\frac{ \bar{\alpha}_1}{a}-\frac{ \bar{\alpha}_2}{d}\\
w^{(4)}_{10}&=&\frac{ \bar{\alpha}_1}{a}-\frac{ \bar{\alpha}_2}{d}\\
w^{(4)}_{E}&=&2 \frac{ \bar{\alpha}_2}{d}.
\end{eqnarray*}

When we refer to the triangular subregion
$S_5=\left\{\mbox{\boldmath $\alpha$}_{01}, \mbox{\boldmath $\alpha$}_{11}, E \right\}$, from
\begin{displaymath}
\left(
\begin{array}{c}\bar{\alpha}_1 \\ \bar{\alpha}_2 \end{array} \right)=w^{(5)}_{01} \left(
\begin{array}{c}b \\c \end{array} \right) +w^{(5)}_{11} \left(
\begin{array}{c}a \\d \end{array} \right)+w^{(5)}_{E}\left(
\begin{array}{c}a/2 \\d/2 \end{array}
\right)
\end{displaymath}
and $w^{(5)}_{E}+w^{(5)}_{01}+w^{(5)}_{11}=1$, we obtain
\begin{eqnarray*}
w^{(5)}_{01}&=&\frac{a\, \bar{\alpha}_2-d\, \bar{\alpha}_1}{ac-bd}\\
w^{(5)}_{11}&=&\frac{(2c-d)\, \bar{\alpha}_1-(2b-a)\, \bar{\alpha}_2}{ac-bd}-1\\
w^{(5)}_{E}&=&2 \frac{(d-c)\, \bar{\alpha}_1+(b-a)\, \bar{\alpha}_2}{ac-bd}+2.
\end{eqnarray*}

Conditions under which each model is optimal may be derived using the sets of $w$'s weights.
In particular, $M_{00}$ is optimal if:
$$w^{(1)}_{00} \geq \frac{1}{2} \quad \quad w^{(2)}_{00}\geq \frac{1}{2} \quad \quad w^{(4)}_{00} \geq w^{(4)}_{10}.$$
However, since $w^{(4)}_{00}= w^{(4)}_{10}+2 \, w^{(1)}_{00}-1$, the third condition is equivalent to the first and the first two give:
\begin{eqnarray*}
&&p_1+p_{01}\, r_{12} \frac{r_{2y}}{r_{1y}} \leq \frac{1}{2} \\
&&p_2+p_{10}\, r_{12} \frac{r_{1y}}{r_{2y}} \leq \frac{1}{2},
\end{eqnarray*}
where $p_1= p_{10}+p_{11}$ and $p_2=p_{01}+p_{11}$ are the posterior inclusion probabilities of the two covariates.

Model $M_{10}$ is optimal if:
$$w^{(1)}_{00}\leq \frac{1}{2} \quad \quad w^{(1)}_{00}+w^{(1)}_{10}=1-w^{(1)}_{11}\geq \frac{1}{2} \quad \quad w^{(3)}_{10} \geq w^{(3)}_{01} \quad \quad w^{(4)}_{10} \geq w^{(4)}_{00}.$$
Where, as before, the last condition is equivalent to the first and the other three may be restated as:
\begin{eqnarray*}
&&p_1+p_{01}\, r_{12} \frac{r_{2y}}{r_{1y}}\geq \frac{1}{2} \\
&&p_2+p_{01}\, r_{12} \frac{r_{1y}}{r_{2y}}\frac{1-r_{12}\frac{r_{2y}}{r_{1y}} }{1-r_{12}\frac{r_{1y}}{r_{2y}}} \leq \frac{1}{2}\\
&& \left(\frac{r_{1y}}{r_{2y}}\right)^2 \left[\left(1- r_{12} \frac{r_{2y}}{r_{1y}}\right)
p_1-\frac{1}{2}\right] \geq \left[\left(1- r_{12} \frac{r_{1y}}{r_{2y}}\right)
p_2-\frac{1}{2}\right].
\end{eqnarray*}

Model $M_{01}$ is optimal if:
$$w^{(2)}_{00}\leq \frac{1}{2} \quad \quad w^{(2)}_{00}+w^{(2)}_{01}=1-w^{(2)}_{11}\geq \frac{1}{2} \quad \quad w^{(3)}_{10} \leq w^{(3)}_{01} \quad \quad w^{(5)}_{01} \geq w^{(5)}_{11}.$$
 Since $w^{(5)}_{11}= 2 \, w^{(2)}_{11}+w^{(5)}_{01}-1$, the last condition is equivalent to the second and the first three give:
\begin{eqnarray*}
&&p_1+p_{10}\, r_{12} \frac{r_{2y}}{r_{1y}}  \frac{1-r_{12}\frac{r_{1y}}{r_{2y}} }{1-r_{12}\frac{r_{2y}}{r_{1y}} }\leq \frac{1}{2} \\
&&p_2+p_{10}\, r_{12} \frac{r_{1y}}{r_{2y}} \geq \frac{1}{2}\\
&& \left(\frac{r_{1y}}{r_{2y}}\right)^2 \left[\left(1- r_{12} \frac{r_{2y}}{r_{1y}}\right)
p_1-\frac{1}{2}\right] \leq \left[\left(1- r_{12} \frac{r_{1y}}{r_{2y}}\right)
p_2-\frac{1}{2}\right].
\end{eqnarray*}

Finally $M_{11}$ is optimal if:
$$w^{(1)}_{11}\geq \frac{1}{2}  \quad \quad w^{(2)}_{11}\geq \frac{1}{2}  \quad \quad w^{(5)}_{01} \leq w^{(5)}_{11}.$$
Where, as before, the third is equivalent to the second and the first two may be restated as:
\begin{eqnarray*}
&&p_2+p_{01}\, r_{12} \frac{r_{1y}}{r_{2y}}\frac{1-r_{12}\frac{r_{2y}}{r_{1y}} }{1-r_{12}\frac{r_{1y}}{r_{2y}}} \geq \frac{1}{2}\\
&&p_1+p_{10}\, r_{12} \frac{r_{2y}}{r_{1y}}  \frac{1-r_{12}\frac{r_{1y}}{r_{2y}} }{1-r_{12}\frac{r_{2y}}{r_{1y}} }\geq \frac{1}{2}.
\end{eqnarray*}

The same conclusions may be obtained using the risks. In fact:
\begin{eqnarray*}
R(M_{10})-R(M_{00})&=&2\, a^2 \left(w^{(1)}_{00}-\frac{1}{2}\right)\\
R(M_{01})-R(M_{00})&=&2\, (b^2+c^2) \left(w^{(2)}_{00}-\frac{1}{2}\right)\\
R(M_{11})-R(M_{10})&=&2\, d^2 \left(\frac{1}{2}- w^{(1)}_{11}\right)\\
R(M_{11})-R(M_{01})&=&2\, (a^2+d^2-b^2-c^2) \left(\frac{1}{2}- w^{(2)}_{11}\right)\\
R(M_{01})-R(M_{10})&=&2\, (ac+bd-ad) \left(w^{(3)}_{10}- w^{(3)}_{01}\right)
\end{eqnarray*}
where all multiplying constants are positive.

After setting
$$A_1= r_{12} \frac{r_{1y}}{r_{2y}} \quad \quad \mbox{and} \quad \quad  A_2=r_{12} \frac{r_{2y}}{r_{1y}} , $$
we may restate the optimality conditions of each model as follows.

 $M_{00}$ is optimal if
\begin{eqnarray}
&&p_1+p_{01}\, A_2 \leq \frac{1}{2} \nonumber\\
&&p_2+p_{10}\, A_1 \leq \frac{1}{2}, \label{eq:oc00}
\end{eqnarray}
 $M_{10}$ is optimal if
\begin{eqnarray}
&&p_1+p_{01}\, A_2\geq \frac{1}{2}
\nonumber\\
&&p_2+p_{01}\, A_1\frac{1-A_2 }{1-A_1} \leq \frac{1}{2}
\label{eq:oc10}\\
&& \left(\frac{r_{1y}}{r_{2y}}\right)^2 \left[\left(1- A_2\right)
p_1-\frac{1}{2}\right] \geq \left[\left(1- A_1\right)
p_2-\frac{1}{2}\right],\nonumber
\end{eqnarray}
$M_{01}$ is optimal if
\begin{eqnarray}
&&p_1+p_{10}\, A_2  \frac{1-A_1 }{1-A_2 }\leq \frac{1}{2}
\nonumber\\
&&p_2+p_{10}\, A_1 \geq \frac{1}{2}
\label{eq:oc01}\\
&& \left(\frac{r_{1y}}{r_{2y}}\right)^2 \left[\left(1- rA_2 \right)
p_1-\frac{1}{2}\right] \leq \left[\left(1- A_1\right)
p_2-\frac{1}{2}\right],\nonumber
\end{eqnarray}
 $M_{11}$ is optimal if
\begin{eqnarray}
&&p_2+p_{01}\, A_1 \frac{1-A_2}{1-A_1} \geq \frac{1}{2}
\nonumber\\
&&p_1+p_{10}\, A_2  \frac{1-A_1 }{1-A_2 }\geq \frac{1}{2}.
\label{eq:oc11}
\end{eqnarray}

\begin{table}[ht]
\begin{center}
\begin{tabular}{||c|c|c||}\hline\hline
 Case 1 & Case 2 & Case 3\\\hline\hline
$A_1 <0$ & $0<A_1<1$ & $0<A_1<1$ \\
$A_2 <0$ & $0<A_2<1$ & $1<A_1$ \\
$B_1 <0$ & $0<B_1$ & $B_1<0$ \\
$B_2 <0$ & $0<B_2$ & $B_2<0$ \\
\hline\hline
\end{tabular}
\end{center}
\caption{Characterization of possible scenarios in term of $A_1$, $A_2$, $B_1$ and $B_2$.} \label{tab:char1}
\end{table}

From the optimality conditions and the results in Table \ref{tab:char1}, where
$$B_1=A_1 \frac{1-A_2}{1-A_1} \quad \quad \mbox{and} \quad \quad B_2 =A_2\frac{1-A_1}{1-A_2},$$
the  Mini-Theorems 1-7 in Theorem \ref{thm:mini} from Section 3.2 follow.

\section*{Appendix 2: Details from the Numerical Study}

We first discuss the choice of the correlation ranges adopted in the numerical studies. The idea is to find,
for each possible true model -- null, one-variable and full -- the natural ranges of $r_{1y}$ and $r_{2y}$,
in the sense of spanning the high probability region of data arising from the true model.

We do the computations in this appendix without standardizing variables, so that $\beta_1$ and $\beta_2$ in
the true model do not change with $n$.
Thus  $r_{12}= {\bm x_1}'{\bm x_2}/[\|{\bm x}_1\| \|{\bm x}_2\|]$.  Note that, with $\bm \varepsilon \sim N_n(\bm 0,\bm I)$,
$ Z_i = {\bm x_i}'\bm \varepsilon \sim N(0,\|{\bm x}_i\|^2)$, $ Z_i^* = \frac{Z_i}{\|{\bm x}_i\|} \sim N(0,1)$,
and ${\bm \varepsilon}' {\bm \varepsilon} \sim \chi_n^2$,
\begin{eqnarray*} \|\bm y\|^2 &=& \|{\bm X} {\bm \beta} + {\bm \varepsilon}\|^2
= \|{\bm x}_1\|^2 \beta_1^2 + \|{\bm x}_2\|^2 \beta_2^2 + 2 r_{12}\|{\bm x}_1\| \|{\bm x}_2\| \beta_1  \beta_2
+2 Z_1 \beta_1 + 2 Z_2 \beta_2 + \chi_n^2 \,, \\
r_{1y} &=& \frac{{\bm x_1}' {\bm y}}{\|{\bm x}_1\|\|\bm y\|}
= \frac{{\bm x_1}' [{\bm X} {\bm \beta} + {\bm \varepsilon}]}{\|{\bm x}_1\|\|\bm y\|}
 = \frac{\|{\bm x}_1\|^2 \beta_1 + r_{12}\|{\bm x}_1\| \|{\bm x}_2\| \beta_2 + Z_1}{\|{\bm x}_1\|\|\bm y\|}=\\
& =&\frac{\|{\bm x}_1\| \beta_1 + r_{12}\|{\bm x}_2\| \beta_2 + Z_1^*}{\|\bm y\|} \,, \\
r_{2y} &=& \frac{{\bm x_2}' {\bm y}}{\|{\bm x}_2\|\|\bm y\|}
= \frac{{\bm x_2}' [{\bm X} {\bm \beta} + {\bm \varepsilon}]}{\|{\bm x}_2\|\|\bm y\|}
 = \frac{\|{\bm x}_2\|^2 \beta_2 + r_{12}\|{\bm x}_1\| \|{\bm x}_2\| \beta_1 + Z_2}{\|{\bm x}_2\|\|\bm y\|}=\\
& =&\frac{\|{\bm x}_2\| \beta_2 + r_{12}\|{\bm x}_1\| \beta_1 + Z_2^*}{\|\bm y\|}  \,.
\end{eqnarray*}

\smallskip
\noindent
{\bf When the full model is true:} There is nothing unusual about the behavior of $r_{1y}$ and $r_{2y}$, so
they are allowed to vary independently over the grid {\small $\{0.1, 0.2, 0.3, 0.4, 0.5, 0.6, 0.7, 0.8, 0.9\}$},
but with $r_{1y} \leq r_{2y}$ to eliminate duplicates. Also, only correlations for which
the resulting correlation matrix is positive definite are considered.

\bigskip
\noindent
{\bf When the null model is true:} Now the expressions above become
$$ \|\bm y\|^2 = \chi_n^2, \quad r_{1y}= \frac{Z_1^*}{\sqrt{\chi_n^2}}, \quad  r_{2y}= \frac{Z_2^*}{\sqrt{\chi_n^2}} \,.$$
So, if we want to cover, say, 90\% of the probability range of the $r_{iy}$, we should use a grid such as
$$\{\frac{0.2}{\sqrt{n}},\frac{0.4}{\sqrt{n}},\frac{0.6}{\sqrt{n}},\frac{0.8}{\sqrt{n}},\frac{1.0}{\sqrt{n}},\frac{1.2}{\sqrt{n}},
\frac{1.4}{\sqrt{n}},\frac{1.6}{\sqrt{n}},\frac{1.8}{\sqrt{n}} \} \,,$$
again with $r_{1y} \leq r_{2y}$ and keeping only those for which
the resulting correlation matrix is positive definite.
(For small $n$, one would want to use a grid from the $t$-distribution with $n$ degrees of freedom, since that is the
distribution of the $r_{iy}$ but, for the numerical study, this is not necessary.)

\bigskip
\noindent
{\bf When $\beta_1 = 0$ and $\beta_2 \neq 0$:} Now the expressions above become
\begin{eqnarray*} \|\bm y\|^2 &=&  \|{\bm x}_2\|^2 \beta_2^2 +  2 Z_2 \beta_2 +  \chi_n^2 \,, \\
r_{1y} &=& \frac{ r_{12}\|{\bm x}_2\| \beta_2 + Z_1^*}{\sqrt{|\|{\bm x}_2\|^2 \beta_2^2 +  2 Z_2 \beta_2 +  \chi_n^2|}}
\approxeq \frac{r_{12}\|{\bm x}_2\| \beta_2}{\sqrt{|\|{\bm x}_2\|^2 \beta_2^2 +  2 Z_2 \beta_2 +  \chi_n^2|}} \,, \\
r_{2y} &=&  \frac{\|{\bm x}_2\| \beta_2  + Z_2^*}{\sqrt{|\|{\bm x}_2\|^2 \beta_2^2 +  2 Z_2 \beta_2 +  \chi_n^2|}}
\approxeq \frac{\|{\bm x}_2\| \beta_2}{\sqrt{|\|{\bm x}_2\|^2 \beta_2^2 +  2 Z_2 \beta_2 +  \chi_n^2|}} \,,
\end{eqnarray*}
the last approximations following because the $Z_i^*$ are $O(1)$ and the other terms are $O(\sqrt{n})$.
As in the full model case, both correlations are $O(1)$, so nothing has to go to zero. But note that
$$ r_{1y} \approxeq  r_{12} r_{2y} \,.$$
Since the error in the approximation is $O(1/{\sqrt{n}})$ (and looks to be smaller than $1/{\sqrt{n}}$), this suggests gridding $r_{2y}$ in the usual way (from 0.1 to 0.9) and then using a grid for $r_{1y}$ such as
$$ \left\{\left(r_{12} r_{2y} + \frac{h}{\sqrt{n}}\right), \quad h \in \{-0.9,-0.7,-0.5,-0.3,-0.1, 0.1, 0.3, 0.5, 0.7, 0.9\} \right\}\,,$$
again with $r_{1y} \leq r_{2y}$ and keeping only those for which
the resulting correlation matrix is positive definite.

{\begin{table}[t!]
\centering

\medskip
\scalebox{0.8}{ \begin{tabular}{|l|r|r|r|r|r|r|r|r|} \hline
  & M=H=O & M=H$\neq$O & M=O$\neq$H & H=O$\neq$M &H$>$M &  M$>$H & GM $\frac{R(\bg^{M})}{R(\bg^o)}$& GM $\frac{R(\bg^{H})}{R(\bg^o)}$ \\
   &   &  &  & & both $\neq$ O  & both $\neq$ O & &  \\ \hline\hline
   \multicolumn{9}{c}{Case 1: Full model scenario}\\ \hline\hline
n=10&158 &5 &14 &3 &0 &0 &1.01 &1.03\\
n=50&177 &1 &2 &0 &0 &0 &1.001 &1.006 \\
n=100&178 &0 &2 &0 &0 &0 &1 &1.003\\
    \hline \hline
  \multicolumn{9}{c}{Case 2: Full model scenario}\\ \hline\hline
n=10&156 &52 &10 &0 &4$^\star$ &0 &1.11 &1.15\\
n=50&200 &3 &11 &0 &8$^\star$ &0 &1.04 &1.13 \\
n=100&206 &4 &12 &0 &0 &0 &1.01 &1.13\\
    \hline \hline
  \multicolumn{9}{c}{Case 3: Full model scenario}\\ \hline\hline
n=10&90 &36 &3 &0 &0 &3$^\star$ &1.13 &1.15 \\
n=50&128 &4 &0 &0 &0 &0 &1.01 &1.01 \\
n=100&128 &4 &0 &0 &0 &0 &1.02 &1.02\\
    \hline \hline
       \multicolumn{9}{c}{Cases combined: Full model scenario}\\ \hline\hline
n=10&404 &93 &27 &3 &4$^\star$ &3$^\star$ &1.08 &1.10\\
n=50&505 &8 &13 &0 &8$^\star$ &0 &1.02 &1.06 \\
n=100&512 &8 &14 &0 &0 &0 &1.02 &1.06\\
    \hline \hline
    Overall & 1421 &109 &54 &3 &12$^\star$ &3$^\star$ &1.03 &1.07  \\  \hline \hline
      & 88.7$\%$ & 6.8$\%$ & 3.4$\%$ & 0.2$\%$ & 0.7$\%$ & 0.2$\%$ &&\\ \hline \hline
 \end{tabular}}
\caption{A summary of the numerical study in the two variable case. Legend: H = HPM, M = MPM, O = optimal predictive model;  M$>$H means that MPM has smaller \eqref{Rcrit} than HPM; H$>$M means that HPD has smaller \eqref{Rcrit} than MPM; and GM is the geometric mean of relative risks (to the optimal model) when MPM or HPM is not optimal.} \label{table-full}
{$^*$ Curiously, the optimal model, $O$, is the {\em lowest} probability model in these cases. }
\end{table}}

{\begin{table}[h!]
\centering
\medskip
\scalebox{0.8}{ \begin{tabular}{|l|r|r|r|r|r|r|r|r|} \hline
  & M=H=O & M=H$\neq$O & M=O$\neq$H & H=O$\neq$M &H$>$M &  M$>$H & GM $\frac{R(\bg^{M})}{R(\bg^o)}$& GM $\frac{R(\bg^{H})}{R(\bg^o)}$ \\
   &   &  &  & & both $\neq$ O  & both $\neq$ O & &  \\ \hline\hline
   \multicolumn{9}{c}{Case 1: $\beta_1 = 0$ and $\beta_2 \neq 0$ scenario}\\ \hline\hline
n=10&119 &2 &7 &2 &0 &0 &1.007 &1.012\\
n=50&75 &0 &0 &0 &0 &0 &1 &1 \\
n=100&45 &0 &0 &0 &0 &0 &1 &1\\
    \hline \hline
  \multicolumn{9}{c}{Case 2: $\beta_1 = 0$ and $\beta_2 \neq= 0$ scenario}\\ \hline\hline
n=10&162 &79 &3 &0 &3$^\star$ &0 &1.136 &1.036\\
n=50&288 &29 &2 &1 &8$^\star$ &0 &1.095 &1.085 \\
n=100&325 &25 &0 &0 &0 &0 & 1.058 &1.058\\
    \hline \hline
  \multicolumn{9}{c}{Case 3: $\beta_1 = 0$ and $\beta_2 \neq 0$ scenario}\\ \hline\hline
n=10&143 &88 &8 &0 &0 &4$^\star$ &1.126 &1.151 \\
n=50&298&28 &1 &0 &1 &0 &1.028 &1.026 \\
n=100&312 &40 &1 &0 &0 &1$^\star$ &1.037 &1.039\\
    \hline \hline
       \multicolumn{9}{c}{Cases combined: $\beta_1 = 0$ and $\beta_2 \neq 0$ scenario}\\ \hline\hline
n=10&424 &169 &18 &2 &3$^\star$ &4$^\star$  &1.104 &1.114\\
n=50&661 &57 &3 &1 &9 &0 &1.054 &1.049 \\
n=100&682 &65 &1 &0 &0 &1$^\star$ &1.045 &1.046\\
    \hline \hline
    Overall & 1767 &291 &22 &3 &12 &5$^\star$ &1.065 &1.067  \\  \hline \hline
    & 84.1$\%$ & 13.9$\%$ & 1.1$\%$ & 0.1$\%$ & 0.6$\%$ & 0.2$\%$ &&\\\hline \hline
 \end{tabular}}
\caption{A summary of the numerical study in the two variable case. Legend: H = HPM, M = MPM, O = optimal predictive model;  M$>$H means that MPM has smaller \eqref{Rcrit} than HPM; H$>$M means that HPD has smaller \eqref{Rcrit} than MPM; and GM is the geometric mean of relative risks (to the optimal model) when MPM or HPM is not optimal.} \label{table-onevariable}
{$^*$ Curiously, the optimal model, $O$, is the {\em lowest} probability model in these cases. }
\end{table}}

{\begin{table}[h!]
\centering
\medskip
\scalebox{0.8}{ \begin{tabular}{|l|r|r|r|r|r|r|r|r|} \hline
  & M=H=O & M=H$\neq$O & M=O$\neq$H & H=O$\neq$M &H$>$M &  M$>$H & GM $\frac{R(\bg^{M})}{R(\bg^o)}$& GM $\frac{R(\bg^{H})}{R(\bg^o)}$ \\
   &   &  &  & & both $\neq$ O  & both $\neq$ O & &  \\ \hline\hline
   \multicolumn{9}{c}{Case 1: Null model scenario}\\ \hline\hline
n=10&268 &16 &35 &2 &0 &0 &1.01 &1.04\\
n=50&382 &5 &11 &3 &0 &0 &1.002 &1.008 \\
n=100&397 &2 &4 &1 &0 &1 &1.0009 &1.0036\\
    \hline \hline
  \multicolumn{9}{c}{Case 2: Null model scenario}\\ \hline\hline
n=10&159 &70 &3 &0 &7$^\star$ &0 &1.12 &1.09\\
n=50&233 &6 &0 &0 &0 &0 &1.006 &1.006 \\
n=100&239 &0 &0 &0 &0 &0 &1 &1\\
    \hline \hline
  \multicolumn{9}{c}{Case 3: Null model scenario}\\ \hline\hline
n=10&43 &92 &12 &0 &1 &7$^\star$ &1.38 &1.37 \\
n=50&67&87 &2 &0 &0 &2$^\star$ &1.20 &1.21 \\
n=100&99 &58 &1 &0 &0 &0 &1.03 &1.12\\
    \hline \hline
       \multicolumn{9}{c}{Cases combined: Null model scenario}\\ \hline\hline
n=10&470 &178 &50 &2 &8 &7$^\star$ &1.106 &1.120\\
n=50&682 &98 &13 &3 &0 &2$^\star$ &1.039 &1.045 \\
n=100&735 &60 &5 &1 &0 &1 &1.023 &1.024\\
    \hline \hline
    Overall & 1887 &336 &68 &6 &8 &10 &1.054 &1.060  \\  \hline \hline
    & 81.6$\%$ & 14.5$\%$ & 2.9$\%$ & 0.3$\%$ & 0.3$\%$ & 0.4$\%$ &&\\\hline \hline
 \end{tabular}}
\caption{A summary of the numerical study in the two variable case. Legend: H = HPM, M = MPM, O = optimal predictive model;  M$>$H means that MPM has smaller \eqref{Rcrit} than HPM; H$>$M means that HPD has smaller \eqref{Rcrit} than MPM; and GM is the geometric mean of relative risks (to the optimal model) when MPM or HPM is not optimal.} \label{table-null}
{$^*$ Curiously, the optimal model, $O$, is the {\em lowest} probability model in these cases. }
\end{table}}

\clearpage


\begin{thebibliography}{21}
\providecommand{\natexlab}[1]{#1}
\providecommand{\url}[1]{\texttt{#1}}
\expandafter\ifx\csname urlstyle\endcsname\relax
  \providecommand{\doi}[1]{doi: #1}\else
  \providecommand{\doi}{doi: \begingroup \urlstyle{rm}\Url}\fi

\bibitem[Barbieri and Berger(2004)]{barbieri}
M.~M. Barbieri and J.~O. Berger.
\newblock Optimal predictive model selection.
\newblock \emph{The Annals of Statistics}, 32:\penalty0 870--897, 2004.

\bibitem[Bayarri et~al.(2012)Bayarri, Berger, Forte, and
  Garcia-Donato]{bayarri_etal}
S.~Bayarri, J.O. Berger, A.~Forte, and G.~Garcia-Donato.
\newblock Criteria for {B}ayesian model choice with application to variable
  selection.
\newblock \emph{The Annals of Statistics}, 40:\penalty0 1550--1577, 2012.

\bibitem[Berger and Pericchi(2001)]{Berg:Peri:01}
J.~O. Berger and R.~L. Pericchi.
\newblock Objective {B}ayesian methods for model selection: Introduction and
  comparison (with discussion).
\newblock In P.~Lahiri, editor, \emph{Model Selection}, pages 135--207.
  Institute of Mathematical Statistics Lecture Notes- Monograph Series, volume
  38, 2001.

\bibitem[Clyde et~al.(2011)Clyde, Ghosh, and Littman]{clyde2011bayesian}
M.~A. Clyde, J.~Ghosh, and M.~L. Littman.
\newblock Bayesian adaptive sampling for variable selection and model
  averaging.
\newblock \emph{Journal of Computational and Graphical Statistics}, 20\penalty0
  (1):\penalty0 80--101, 2011.

\bibitem[Cui and George(2008)]{cui2008empirical}
W.~Cui and E.~I. George.
\newblock Empirical {B}ayes vs. fully {B}ayes variable selection.
\newblock \emph{Journal of Statistical Planning and Inference}, 138\penalty0
  (4):\penalty0 888--900, 2008.

\bibitem[Drachal(2018)]{drachal2018comparison}
K.~Drachal.
\newblock Comparison between {B}ayesian and information-theoretic model
  averaging: Fossil fuels prices example.
\newblock \emph{Energy Economics}, 2018.

\bibitem[Feldkircher(2012)]{feldkircher2012forecast}
M.~Feldkircher.
\newblock Forecast combination and bayesian model averaging: A prior
  sensitivity analysis.
\newblock \emph{Journal of Forecasting}, 31\penalty0 (4):\penalty0 361--376,
  2012.

\bibitem[Fouskakis et~al.(2018)Fouskakis, Ntzoufras, and
  Perrakis]{fouskakis2018power}
D.~Fouskakis, I.~Ntzoufras, and K.~Perrakis.
\newblock Power-expected-posterior priors for generalized linear models.
\newblock \emph{Bayesian Analysis}, 13\penalty0 (3):\penalty0 721--748, 2018.

\bibitem[Garcia-Donato and Martinez-Beneito(2013)]{garcia2013sampling}
G.~Garcia-Donato and M.A. Martinez-Beneito.
\newblock On sampling strategies in {B}ayesian variable selection problems with
  large model spaces.
\newblock \emph{Journal of the American Statistical Association}, 108\penalty0
  (501):\penalty0 340--352, 2013.

\bibitem[George(2010)]{george2010}
E.~I. George.
\newblock Dilution priors: Compensating for model space redundancy.
\newblock \emph{IMS Collections: Borrowing Strength: Theory Powering
  Applications - A. Festschri for Lawrence D. Brown}, 6:\penalty0 158--165,
  2010.

\bibitem[George and McCulloch(1993)]{GM93}
Edward~I. George and Robert~E. McCulloch.
\newblock Variable selection via {G}ibbs sampling.
\newblock \emph{Journal of the American Statistical Association}, 88:\penalty0
  881--889, 1993.

\bibitem[Ghosh(2015)]{ghosh2015bayesian}
J.~Ghosh.
\newblock Bayesian model selection using the median probability model.
\newblock \emph{Wiley Interdisciplinary Reviews: Computational Statistics},
  7\penalty0 (3):\penalty0 185--193, 2015.

\bibitem[Hahn and Carvalho(2015)]{hahn_carvalho}
R.~Hahn and C.~Carvalho.
\newblock Decoupling shrinkage and selection in {B}ayesian linear models: A
  posterior summary perspective.
\newblock \emph{Journal of the American Statistical Association}, 110:\penalty0
  435--448, 2015.

\bibitem[Ishwaran and Rao(2003)]{ishrawan_rao1}
H~Ishwaran and J.~S. Rao.
\newblock Detecting differentially expressed genes in microarrays using
  {B}ayesian model selection.
\newblock \emph{Journal of the American Statistical Association}, 98:\penalty0
  438--455, 2003.

\bibitem[Liang et~al.(2008)Liang, Paulo, Molina, Clyde, and
  Berger]{liang2008mixtures}
F.~Liang, R.~Paulo, G.~Molina, M.~A. Clyde, and J.~O. Berger.
\newblock Mixtures of g priors for {B}ayesian variable selection.
\newblock \emph{Journal of the American Statistical Association}, 103\penalty0
  (481):\penalty0 410--423, 2008.

\bibitem[Piironen and Vehtari(2017)]{piironen2017comparison}
J.~Piironen and A.~Vehtari.
\newblock Comparison of {B}ayesian predictive methods for model selection.
\newblock \emph{Statistics and Computing}, 27\penalty0 (3):\penalty0 711--735,
  2017.

\bibitem[Ro\v{c}kov\'{a}(2018)]{rockova2017}
V.~Ro\v{c}kov\'{a}.
\newblock Bayesian estimation of sparse signals with a continuous
  spike-and-slab prior.
\newblock \emph{The Annals of Statistics}, 46:\penalty0 401--437, 2018.

\bibitem[Ro\v{c}kov\'{a} and George(2014)]{RG14}
V.~Ro\v{c}kov\'{a} and E.I. George.
\newblock {EMVS}: The {EM} approach to {B}ayesian variable selection.
\newblock \emph{Journal of the American Statistical Association}, 109:\penalty0
  828--846, 2014.

\bibitem[Scott and Berger(2010)]{Scott_berger_2010}
J.~G. Scott and J.~O. Berger.
\newblock Bayes and empirical-{B}ayes multiplicity adjustment in the
  variable-selection problem.
\newblock \emph{The Annals of Statistics}, 38:\penalty0 2587--2619, 2010.

\bibitem[Scott and Carvalho(2008)]{scott2008feature}
J.~G. Scott and C.~Carvalho.
\newblock Feature-inclusion stochastic search for gaussian graphical models.
\newblock \emph{Journal of Computational and Graphical Statistics}, 17\penalty0
  (4):\penalty0 790--808, 2008.

\bibitem[Zellner(1986)]{zellner}
A.~Zellner.
\newblock On assessing prior distributions and {B}ayesian regression analysis
  with g prior distributions.
\newblock \emph{Bayesian Inference and Decision Techniques: Essays in Honor of
  Bruno de Finetti. Studies in Bayesian Econometrics.}, 6:\penalty0
  233?--243, 1986.

\end{thebibliography}
\end{document}